\newcommand*\bigcdot{\mathpalette\bigcdot@{.5}}
\newcommand*\bigcdot@[2]{\mathbin{\vcenter{\hbox{\scalebox{#2}{$\m@th#1\bullet$}}}}}
\newtheorem{mainthm}{Theorem}
\newtheorem{mcorol}[mainthm]{Corollary}
\newtheorem{thm}{Theorem}[section]
\newtheorem{mquestion}{Question}
\newtheorem{lem}[thm]{Lemma}
\newtheorem{prop}[thm]{Proposition}
\newtheorem*{thm*}{Theorem}
\newtheorem{defi}[thm]{Definition}
\theoremstyle{definition}
\newtheorem{claim}[thm]{Claim}
\newtheorem{remark}[thm]{Remark}
\newtheorem{terminology}[thm]{Terminology}
 \DeclareMathOperator{\diam}{diam}
 \DeclareMathOperator{\vol}{vol}
\newcommand{\eqdef}{\stackrel{\scriptscriptstyle\rm def}{=}}
\numberwithin{equation}{section}
\def\cS{\mathcal{S}}
\def\ffT{\mathfrak{T}}
\def\ffR{\mathfrak{R}}
\def\ffL{\mathfrak{L}}
\def\ffS{\mathfrak{F}}
\def\ffF{\mathfrak{F}}
\def\bft{{\mathbf{t}}}
\def\loc{{{\mathrm{loc}}}}
\m@th\displaystyle{##}$}{$\m@th\displaystyle{##}$\hfil}{\lbrace}{.}
\title[Homoclinic tangencies leading to robust heterodimensional cycles]{Homoclinic tangencies leading to robust heterodimensional cycles}
\author[Barrientos, D\'iaz, and P\'erez ]{Pablo G.~Barrientos, Lorenzo J. D\'\i az, and Sebasti\'an A. P\'erez}
\address{Institute of Mathematics and Statistics, University Federal Fluminense-UFF, Gragoata Campus, Rua Prof. Marcos Waldemar de Freitas Reis, S/n-Sao Domingos, Niteroi - RJ, 24210-201, Brazil}
\email{pgbarrientos@id.uff.br}
\address{Departamento de Matem\'atica PUC-Rio, Marqu\^es de S\~ao Vicente 225, G\'avea, Rio de Janeiro 225453-900, Brazil}
\email{lodiaz@mat.puc-rio.br}
\address{Instituto de Matem\'aticas, Pontificia Universidad Cat\'olica de Valpara\'\i so, Blanco Viel 596, Cerro Bar\'on, Valpara\'iso, Chile.}
\email{sebastian.perez.o@pucv.cl}
\begin{document}
\begin{abstract}
We consider  $C^r$  ($r\geqslant 1$)  diffeomorphisms $f$ defined on manifolds of
dimension $\geqslant 3$
with homoclinic
tangencies associated to saddles. Under generic properties,
we show that if
the saddle is homoclinically related to a blender
then the diffeomorphism $f$
can be {$C^r$} approximated by diffeomorphisms with
{$C^1$} robust heterodimensional cycles.
As an application, we show that
the classic Simon-Asaoka's examples of diffeomorphisms
with $C^1$ robust homoclinic tangencies also display
{$C^1$}  robust heterodimensional cycles.
In a second application, we consider homoclinic tangencies associated to
hyperbolic sets. When the entropy of these sets is large enough we obtain $C^1$ robust cycles
after $C^1$ perturbations.
\end{abstract}

\thanks{PGB and LJD were supported  [in part] by CAPES finance code 001 (Brazill), CNPq Projeto Universal and CNPq grants (Brazil). PGB was partially supported by grant PID2020-113052GB-I00 funded by MCIN/AEI/10.13039/501100011033 (Spain).
 LJD was partially supported by grant E-16/2014 INCT/FAPERJ  (Brazil).
SP was supported by Programa Postdoctorado FONDECYT 3190174 (Chile).}

\keywords{Blender,  Cycles, Entropy, Heterodimensional cycle, Homoclinic tangency, Hyperbolic measure,
Lyapunov exponent, Robust properties}
\subjclass[2020]{Primary: 37C20. Secondary: 37C29, 37D20, 37D30}

\maketitle

\section{Introduction}
\label{s.introduction}

Homoclinic tangencies and heterodimensional cycles
associated to saddles are considered the main
bifurcation mechanisms generating nonhyperbolic dynamics. Palis conjecture \cite{Pal:00} claims that
 nonhyperbolic diffeomorphisms can be approached by systems displaying these bifurcations.
 The goal of this paper is to study the interplay between these two types of bifurcations.
A {\em{saddle}} (i.e., a hyperbolic periodic point with nontrivial stable and unstable bundles)
 of  a diffeomorphism has a {\em{homoclinic tangency}} if its stable and unstable manifolds
 have some nontransverse intersection.
 Two saddles with different {\emph{$u$-indices}} (i.e., dimension of the unstable bundle) have a
 {\em{heterodimensional cycle}} if their stable and unstable manifolds intersect  cyclically (by dimension deficiency, one of these intersections is
necessarily nontransverse). Homoclinic tangencies may occur in
manifolds of dimension two or higher, while the occurrence of
heterodimensional cycles requires dimension at least three.
 In what follows,  we will use the
term {\em{cycle}} to refer either to a homoclinic tangency or to a
heterodimensional cycle.

Note that cycles associated to saddles are breakable by small perturbations:
by Kupka-Smale theorem, periodic points of generic  diffeomorphisms are hyperbolic and their invariant manifolds are in general position (either they intersect transversely or are disjoint).
Even more, cycles may be  unbreakable if they are
associated to {\em{nontrivial}}  hyperbolic basic sets (i.e., not just
periodic orbits). This leads to the notions of \emph{$C^r$ persistent} and \emph{$C^r$ robust cycles.}
In the persistent case,  there is a $C^r$ open set where the diffeomorphisms
with cycles associated to saddles are $C^r$ dense. In the robust case, there is a hyperbolic set
with a cycle that cannot be destroyed by small  $C^r$ perturbations, see Definition~\ref{d.robustcycle}\footnote{Any robust cycle provides a persistent cycle.  As far as we know, it is unknown the existence of persistent cycles that are not robust (meaning that the saddle belongs to a hyperbolic set with a robust cycle).}.
Bearing in mind the importance of robust
dynamical properties, Bonatti proposed in~\cite{Bon:11} a
stronger version of Palis conjecture: every
nonhyperbolic diffeomorphism can be approached by diffeomorphisms
with a robust cycle.

Palis conjecture holds for
$C^1$ surface diffeomorphism \cite{PujSam:00} and there is some progress in higher dimensions,  see
\cite{CroPuj:15,CroSamYan:15}. Note that there are no surface diffeomorphisms
with $C^1$ robust homoclinic tangencies associated to basic sets, see
 \cite{Mor:11}.  In higher dimensions and  in the $C^1$ topology, Bonatti conjecture is true for the so-called
{\emph{tame diffeomorphisms,}} \cite{BonDia:08}.
For both conjectures, the $C^r$ case, $r \geqslant 2$, is widely open. The first reason for that is
that they
rely on the
  density of periodic points in the limit set, which is only known in the
$C^1$ case, see \cite{Pug:67,Pug:75}.  A second reason is the lack of connecting-like lemmas
allowing intersections between the invariant manifolds of saddles in the presence of  recurrences, see  \cite{Hay:97,BonCro:04}.
In view of these obstacles,
 the following question
(that can be posed in any regularity) provides
a bridge between these conjectures and a step towards  the one by Bonatti:
\begin{mquestion}\label{q.firstq}
When does a cycle associated to saddles yield (by small) perturbations robust cycles?
\end{mquestion}

The answer to this question depends on the type of cycle, the
dimension of the ambience, and the required regularity.
A strategy to find robust cycles is to see that a cycle can become robust
 when the saddle in the cycle belongs  to (or when the unfolding of the cycle generates) some {\em{massive}} horseshoe.
 This vague term means {\em{thick horseshoes}}
 or {\em{blenders,}} according to the type of cycle.
  We observe that $C^1$ diffeomorphisms cannot display robustly thick horseshoes, \cite{Ure:95},  while blenders may occur in dimension three or higher
in a $C^1$ robust way. This illustrates  some disparities between
the different  settings. Blenders will play a key role in this paper and will be discussed in
Section~\ref{s.blenders}. For the discussion in this introduction,  keep in mind that
a blender is a special type of hyperbolic set with some $C^1$ robust geometrical properties
guaranteeing intersections between the local unstable manifold of the blender
and a family of ``strong stable"   disks.

There are partial answers to Question~\ref{q.firstq}.
When $r \geqslant
2$, a homoclinic tangency of a $C^r$ diffeomorphism yields robust tangencies, see
\cite{New:79} in dimension two and
\cite{GonTurShi:93,PalVia:94,Rom:95} in higher dimensions. The generation of
robust tangencies in the $C^1$ setting in higher dimensions
depends crucially on the geometry of the tangency, see the discussion below
below. Finally, the question {\emph{does a  heterodimensional
cycle yield robust heterodimensional cycles?}} has a positive answer in the {\em{coindex}}
(i.e., the absolute value of the difference of the $u$-indices of the hyperbolic sets in the cycle) one case,
see
\cite{BonDia:08} for the $C^1$  case and
\cite{LiTur:xx} for the  $C^r$ case, $r\geqslant 2$.
The case of heterodimensional cycles of higher
coindex is widely open. We observe that all robust cycles generated in this paragraph are of the same type as the ones in the initial cycle.

An  issue related to Question~\ref{q.firstq}  is to determine the
interplay between heterodimensional cycles and homoclinic
tangencies. This leads to the following refinement of Question~\ref{q.firstq}:

\begin{mquestion}
\label{q.refinement}
 {\bf{(a)}} When does a homoclinic tangency associated to a saddle lead to robust
heterodimensional cycles? {\bf{(b)}} When does a
heterodimensional cycle associated to a pair of saddles lead to robust homoclinic tangencies?
\end{mquestion}

{The answers to these questions involves global dynamical aspects. In this paper, we focus on item (a), providing settings where homoclinic tangencies associated to saddles lead
to $C^1$ robust cycles after small $C^r$ perturbations\footnote{Note that if
$f\in \mathrm{Diff}^r(M)$, $r\geqslant 1$, then every $C^1$ neighbourhood of $f$ contains a
$C^r$ neighbourhood of $f$. Hence, if  $f$ has a $C^1$ robust cycle then it also has
a $C^r$ robust cycle.},
$r \geqslant 1$. Our constructions are motivated by the ones in \cite{LiTur:20}, where heterodimensional cycles are obtained
from  homoclinic tangencies with some symmetry conditions.
To describe our setting, we first comment some global aspects and restrictions concerning Question~\ref{q.refinement}.

{The presence of a homoclinic tangency implies
the existence of an invariant space of dimension at least two that cannot be decomposed into
one-dimensional invariant directions in a dominated way (see the appendix in Section~\ref{s.Sinovac} for the precise definitions). Thus partially hyperbolic diffeomorphisms  having
 dominated splittings  with one-dimensional bundles may display heterodimensional cycles but
 cannot exhibit homoclinic tangencies. This shows an obvious restriction to item (b).}
{When considering homoclinic tangencies a key aspect is how they are
embedded in the global dynamics. The following discussion illustrates this
aspect and motivates our constructions and hypotheses.}

To simplify the  analysis, let us assume that the ambiance has dimension three
and  the saddle with the homoclinic tangency has $u$-index one.
Under appropriate assumptions on the multipliers of the saddle,
the unfolding of this tangency yields
new hyperbolic periodic points with $u$-indices different from
one.
 Once saddles of $u$-index two are generated and the resulting dynamics exhibits saddles
 of different $u$-indices, it is necessary  to get cyclic heteroclinic relations between them.

 The occurrence of such heteroclinic relations is an intricate question that involves geometrical constraints.
 For instance,
 the tangency may occur ``inside"  a normally contracting hyperbolic surface and the saddle with the homoclinic tangency may have a Jacobian whose
 restriction to that surface is bigger than one.
 In this case, due to the persistence of normally hyperbolic surfaces,  the homoclinic tangency yields saddles contained in those surfaces with a normally contracting
 direction which are sources for the dynamics restricted to the surface. In this way, the tangency provides saddles of $u$-indices one and two. However, in the absence of further recurrences given by the global dynamics, there is no dynamical interaction between these saddles and heterodimensional cycles cannot occur. In rough terms, in the previous configuration,
 the dynamics can be ``reduced"  to a surface dynamics ``multiplied" by a contraction, we will refer to it   as ``essentially two-dimensional bifurcations".

We now   discuss the setting and the hypotheses of our results. Recall that hyperbolic basic sets of  surface diffeomorphisms cannot display $C^1$ robust
homoclinic tangencies. Thus to get $C^1$ robust homoclinic tangencies in higher dimensions
essentially two-dimensional dynamics must be avoided.
 To bypass this obstruction, in  \cite{BonDia:12}
 it is considered
a type of blender (called {\emph{blender-horseshoe,}} see Remark~\ref{r.blenderhorseshoe})
 with a homoclinic tangency.
Although
 \cite[Theorem 1.2]{BonDia:12}, see also \cite[Theorem 1 and Corollary 2]{BonCroDiaGou:13}, are  $C^1$ global results involving the recurrences provided by a {\em{homoclinic class}} (i.e., the closure of the set of transverse intersections between the invariant manifolds of a saddle)
they lead to the following heuristic principle:
a homoclinic class with robust {\emph{index variability}} (i.e., containing saddles of different
$u$-indices)
and a nondominated ``central" direction displays $C^1$ robust cycles
(simultaneously, homoclinic
tangencies and  heterodimensional cycles).  In the $C^1$ setting in
\cite{BonDia:12, BonCroDiaGou:13}
the index variability provides blenders inside the homoclinic  class as well as the following configuration:
a saddle
(satisfying some dissipative conditions)
with a homoclinic tangency and
homoclinically related to a blender.
The tangency provides the nondomination condition and also (after its unfolding) the index variability. The blender leads to dynamics that are not essentially two-dimensional.

This sort of conditions motivated the hypotheses in
Theorems~\ref{thm:main} and ~\ref{thm:main2} claiming
the existence of $C^1$  robust heterodimensional cycles arising from homoclinic tangencies after
small $C^r$ perturbations, $r \geqslant 1$.
Theorems~\ref{thm:main} considers  the  codimension one case (the saddle with the homoclinic tangency has $u$-index one)  while Theorems~\ref{thm:main2}  deals with the general setting
(any $u$-index). The precise statements can be found in Section~\ref{ss.homoclinictangency}.
The main difference between these two theorems is that in the codimension one case
the homoclinic tangency provides only saddles of $u$-indices one and two. As a consequence, only heterodimensional cycles associated to these saddles can appear and hence these cycles have coindex one. In the general setting,
the homoclinic tangency yields saddles of a wider range of $u$-indices, hence cycles of higher coindex  may appear.

We also provide settings to which our results apply.
For that, let us recall the first examples of $C^1$ robust homoclinic tangencies  given
by Simon  \cite{Sim:72b} and Asaoka
 \cite{Asa:08} built on the examples
 of nonhyperbolic diffeomorphisms in
 \cite{AbrSma:68}.
The key ingredient  in these constructions is the existence of a  hyperbolic set
 that is the local product of a nontrivial hyperbolic repeller
 (a DA-diffeomorphism in \cite{Sim:72b} and a Plykin repeller in \cite{Asa:08})
  and a contraction. We
 call this sort of hyperbolic set {\em{blenders of DA or Plykin type.}} A crucial property
 of this type of blender is that they contain
   their one-dimensional unstable manifolds.
 This property allows to get $C^1$ robust heterodimensional cycles in \cite{AbrSma:68,Sim:72a}
 (although this terminology
 is not used there). The idea in  \cite{Sim:72b,Asa:08}
 (reviewed in Section~\ref{s.simonasaokatan})
 is to produce homoclinic tangencies associated to
 these blenders.
In Corollary~\ref{c.p.asaoka-simon} (see also Remark~\ref{rem.asaoka-simon})
 we prove that these examples satisfy the hypotheses of  Theorem~\ref{thm:main}.  Consequently, for any $r\geqslant 1$, they lead to the simultaneous occurrence of $C^1$ robust
 homoclinic tangencies and heterodimensional cycles.
  Corollary~\ref{c.p.asaoka-simon}
 answers affirmatively a question posed in
\cite[Section 1.3]{BonDia:12}
in a discussion about
the $C^1$ coexistence of robust cycles and tangencies in  the examples in \cite{Asa:08}.

Finally, we exhibit settings where entropy-like assumptions imply the existence of
blenders. For that, we revisit the
$C^1$ constructions in \cite{AviCroWil:21}, showing that diffeomorphisms
with hyperbolic measures whose entropy is sufficiently large
exhibit blenders.  These results are summarised in
Theorem~\ref{mainthm-hyperbolic-measure}. Corollaries~\ref{cor:2}
and \ref{cor:3} translate our main results to that setting,
providing simultaneous $C^1$ robust heterodimensional cycles of a
variety of coindices.

We now state precisely our results.

\subsection{Robust heterodimensional arising from homoclinic tangencies}
\label{ss.homoclinictangency}

In the sequel, $\mathrm{Diff}^r(M^{n})$ denotes the space of $C^r$ diffeomorphisms of
a compact boundaryless manifold $M^n$ of dimension $n$ endowed with the uniform
$C^r$ topology. When we do not want to emphasise the dimension of the ambiance we will omit the superscript $n$.

The first ingredient in our results is the $cs$-blender.
Blenders are hyperbolic sets with an additional geometric structure guaranteeing
intersections between a family of ``stable"  disks and the local unstable manifold of the blender.
Blenders are also $C^1$ robust sets.  See Definition~\ref{def:blender}  for details.

Recall that two saddles of $f \in \mathrm{Diff}^r(M)$
 are {\emph{homoclinically related}} if the invariant manifolds of their orbits intersect transversely and cyclically. To be homoclinic related defines an
equivalence relation on the set of saddles of $f$. Two saddles that are homoclinically related have the same
$u$-index. The {\emph{homoclinic class}} of a saddle $P$ of $f$ is the closure of
the saddles of $f$ that are homoclinically related to $f$. A homoclinic class is a
{\emph{transitive set}} (i.e., it contains a point whose orbit is dense in the set).
As  a blender
$\Gamma$ of $f \in \mathrm{Diff}^r(M)$
is a hyperbolic basic  set (see  Definition~\ref{def:blender}), its saddles form a dense
subset of it and every pair of saddles of $\Gamma$ are
homoclinically related.  A
saddle is {\em{homoclinically related to the blender}}  if it is
homoclinically related to some saddle of the blender,
see (3) in Remark~\ref{r.blender}.

Recall that every hyperbolic set $\Lambda$ of a diffeomorphism $f$
has a well defined {\emph{continuation}}  for every
$C^1$ nearby diffeomorphism $g$.
We denote this continuation by  $\Lambda_g$.
If a hyperbolic set $\Lambda$ is transitive  then
the dimension of its unstable bundle  does not depend on the point
of $\Lambda$. This dimension is called the {\em{$u$-index}}
of $\Lambda$.
\begin{defi}[Robust cycles]\label{d.robustcycle}
{\em{Two transitive
hyperbolic sets $\Lambda$ and $\Upsilon$ of $f \in \mathrm{Diff}^r(M)$ with different
$u$-indices
 form
 a {\emph{heterodimensional cycle}} if their invariant stable and unstable
 sets intersect cyclically, that is,
 $$
 W^s( \Lambda,f)\cap W^u( \Upsilon,f)\ne\emptyset \quad \mbox{and} \quad
 W^u( \Lambda,f)\cap W^s( \Upsilon,f)\ne\emptyset.
 $$
The {\em{coindex}} of this cycle is the absolute value of the
difference of the $u$-indices of $\Lambda$ and $\Upsilon$.
This cycle is {\em{$C^r$ robust}} if there is a $C^r$ neighbourhood $\mathscr{N}$  of
$f$ such that the continuations  $\Lambda_g$ and $\Upsilon_g$ have a heterodimensional cycle
for every $g$ in $\mathscr{N}$.

A transitive hyperbolic set $\Lambda$ has a {\em{homoclinic tangency}}
 it its stable and unstable sets
have some nontransverse intersection. This homoclinic tangency is {\emph{$C^r$ robust}}
if there is a $C^r$ neighbourhood of
$f$ such that the continuation  $\Lambda_g$ of $\Lambda$ has a tangency
for every $g$ in that neighbourhood.}}
\end{defi}

We now describe the class of homoclinic tangencies studied in this paper.
Consider $f\in \mathrm{Diff}^r(M^{m+n})$, with $r, m, n \geqslant 1$ and $m+n\geqslant 3$,
 and
a saddle $P$ of $f$ of period $\pi$ and $u$-index $n$. Let
$\lambda_1,\dots,\lambda_m,\gamma_1,\dots,\gamma_n$ be the
eigenvalues of $Df^\pi(P)$ counted with multiplicity and ordered so that
\begin{equation}
\label{e.igualalambda}
 |\lambda_m|\leqslant \dots\leqslant |\lambda_1|= \lambda
 <1< \gamma=|\gamma_1|\leqslant \dots
\leqslant |\gamma_n|.
\end{equation}
The
multipliers of $P$ with modulus equal to $\lambda$ and $\gamma$
are called \emph{stable  and unstable leading
multipliers,} respectively. We denote by $m_s=m_s(P)$ and $n_u=n_u(P)$ the number
of stable and unstable leading multipliers, respectively, and
 say that $P$ is of
\emph{type $(m_s,n_u)$}.
The saddle $P$ is \textit{simple} if one of the following cases holds:
\begin{itemize}
\item
$P$ is of type $(1,1)$;
\item
 $P$ is of type
$(2,1)$ and  $\lambda_{1}$ and $\lambda_2$ are nonreal (and hence conjugate);
\item
 $P$ is of type
$(1,2)$ and  $\gamma_{1}$ and $\gamma_2$ are nonreal (hence conjugate);
\item
 $P$ is of type $(2,2)$,
 $\lambda_{1}$ and $\lambda_2$ are nonreal,
 and
  $\gamma_{1}$ and $\gamma_2$ are nonreal.
\end{itemize}

 We define the {\emph{leading Jacobian}}  $J_P(f)$ of $P$ as the product of the moduli of all leading multipliers of $P$ counted with multiplicity,
\begin{equation}\label{e.J}
J_P(f)\eqdef \lambda^{m_s}\,\gamma^{n_u}.
\end{equation}

We are now ready to state our main results.

\begin{mainthm} \label{thm:main}
 Consider  $f\in \mathrm{Diff}^r(M^{m+1})$, $r \geqslant 1,\, m\geqslant 2$,
 with a homoclinic tangency
associated to a  simple saddle $P$ of $u$-index one such that:
\begin{enumerate}
\item  $P$ is of type $(m_s,1)$ with $m_s\in \{1,2\}$,
\item $J_P(f)\geqslant 1$ and
\item
$P$ is homoclinically related to a $cs$-blender $\Gamma$ of
central dimension $m_s$.
\end{enumerate}
Then  there are diffeomorphisms $g$ arbitrarily $C^r$ close to $f$
 having simultaneously  $C^1$ robust
heterodimensional cycles  of coindex $i$ for every
$1\leqslant i  \leqslant m_s$. These cycles are associated to a hyperbolic basic set containing  $P_g$
and a hyperbolic basic set of $u$-index $1+i$.
\end{mainthm}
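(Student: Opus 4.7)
The plan is to combine a renormalization near the homoclinic tangency, which produces saddles of each $u$-index between $1$ and $1+m_s$, with the defining geometric property of the $cs$-blender $\Gamma$, which upgrades the resulting heteroclinic cycles to $C^1$ robust ones. The $C^r$-small perturbation is used only to position invariant manifolds appropriately; the robustness comes entirely from the blender.

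First I would fix linearizing coordinates at $P$ (Sternberg-type) and transition coordinates along the orbit of tangency. Since $P$ is simple of type $(m_s,1)$, the leading stable bundle is either a line (if $m_s=1$) or a plane with complex rotation (if $m_s=2$), while the leading unstable direction is one-dimensional. By a standard rescaling argument in the spirit of Gonchenko--Shilnikov--Turaev and Palis--Viana, after a suitable $n$-dependent affine change of coordinates, the first return maps $T_n=f^{\pi n+q}$ to a small box near the tangency converge, in the $C^r$ topology on a fixed box, to a Hénon-like model family on $\mathbb{R}^{m_s+1}$. The parameters of this family (governing the unfolding of the tangency and, when $m_s=2$, the rotation phase of the leading stable block) sweep an open set as $n$ varies. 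The hypothesis $J_P(f)\geqslant 1$ translates into the statement that the leading $(m_s+1)$-dimensional block of the model has Jacobian of modulus $\geqslant 1$; i.e.\ the limit family is volume non-contracting on the leading subspace.

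Second, I would extract from this non-contracting model hyperbolic periodic points $Q_i$ of $T_n$ of $u$-index $1+i$ for every $1\leqslant i\leqslant m_s$. When $m_s=1$ this is the classical dichotomy for the Hénon family at unit Jacobian, producing saddles of $u$-indices $1$ and $2$ in the same renormalization window. When $m_s=2$ the nonreal multipliers, together with $J_P(f)\geqslant 1$, drive a saddle-focus bifurcation in the two-dimensional leading stable block which, for the same parameter $n$, generates saddles of $u$-indices $2$ and $3$ simultaneously. Because each $Q_i$ is born near the tangency orbit, a further $C^r$-small perturbation makes $Q_i$ homoclinically related to $P_g$, and hence, by transitivity of the homoclinic relation, to the blender $\Gamma_g$ (using hypothesis (3)).

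Third, and this is where the hard work lies, I would use $\Gamma$ to make these cycles $C^1$ robust. A $cs$-blender of central dimension $m_s$ has, by Definition~\ref{def:blender}, a $C^1$ robust family of strong-stable disks of each codimension $1,\ldots,m_s$ that intersect $W^u_{\loc}(\Gamma_g)$ in a persistent way. The central technical step is to check that the stable manifold of each $Q_i$, after a final $C^r$-small perturbation, contains a disk in the family associated to coindex $i$. Once such a disk is positioned, the blender supplies a $C^1$ robust intersection $W^s(Q_i,g)\cap W^u(\Gamma_g)\neq\emptyset$, which is one leg of a coindex-$i$ cycle between a hyperbolic basic set containing $P_g$ (and $\Gamma_g$) and a hyperbolic basic set containing $Q_i$. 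The other leg $W^u\cap W^s$ is transverse (by a $\lambda$-lemma argument using the homoclinic relation between $P_g$ and $\Gamma_g$, together with the fact that $W^u(Q_i)$ accumulates on $W^u(P_g)$), hence $C^1$ robust for free.

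The main obstacle is this last step: placing, for \emph{every} $i\in\{1,\dots,m_s\}$ simultaneously, a strong-stable disk of $Q_i$ of the correct codimension into the activation region of $\Gamma_g$. This is what forces both the simplicity assumption on $P$ (so that the leading block, and hence the renormalized family, is well controlled) and the requirement that $\Gamma$ has central dimension \emph{exactly} $m_s$ (so that strong-stable families of every codimension $1,\ldots,m_s$ are available at once). With both in hand, a single $C^r$-small perturbation achieves all the required positionings at the same time, and the blender then yields $C^1$ robust heterodimensional cycles of every coindex $i\in\{1,\dots,m_s\}$ simultaneously, as asserted.
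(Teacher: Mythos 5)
Your overall architecture matches the paper's: unfold the tangency to create saddles of $u$-index $1+i$ near the tangency orbit, then use the blender's superposition region to make the quasi-transverse leg $W^{ss}(Q_i,g)\cap W^u_{\mathrm{loc}}(\Gamma_g,g)$ robust. That half of your argument is essentially the paper's Lemma about robust quasi-transverse intersections, and your remark that one must first position a strong-stable disk of the new saddle inside the activation region corresponds to the paper's preliminary step of replacing $P$ by a saddle $Q$ sitting next to a distinctive saddle of $\Gamma$, so that $W^{ss}_{\mathrm{loc}}(Q,g)$ is already a disk of $\mathscr{D}^{ss}$ and the new saddles inherit this by openness of the superposition region.

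The genuine gap is in your treatment of the other leg. You assert that the transverse intersection $W^u(Q_i,g)\pitchfork W^s(\Gamma_g,g)\ne\emptyset$ comes ``for free'' from a $\lambda$-lemma because $W^u(Q_i)$ accumulates on $W^u(P_g)$. This is circular: the inclination lemma gives such accumulation only \emph{after} one knows $W^u(Q_i)\pitchfork W^s(P_g)\ne\emptyset$, which is exactly what must be proved. The saddles $Q_i$ are born inside a tiny return box $\Pi^+_k$, with local unstable manifolds contained in a small normally hyperbolic disk there, and nothing in the renormalization alone forces their global unstable manifolds to reach $W^s(P_g)$. This is where the hypothesis $J_P(f)\geqslant 1$ does its real work in the paper (beyond selecting which indices appear in the unfolding): it yields uniform volume expansion of the return maps $\ffR_k$ along submanifolds tangent to the center-unstable cone field (Lemma~\ref{lema1}), and, combined with the estimate $\diam_c(\ffR_k(\cS))<K\gamma^k\diam_u(\cS)$ (Lemma~\ref{l.fromRussiawithconstants}) and the \emph{biaccumulation} of $Q$ by its transverse homoclinic points (used to resize the boxes so that their $s$-boundaries lie in $W^s(P)$, Lemma~\ref{lemma:herradura}), it forces iterates of $W^u_{\mathrm{loc}}(Q_i)$ under the return map to grow until they must cross $\partial^s\widetilde\Pi^+_{k,\bft}\subset W^s(P,f_{\bft})$ transversally rather than escape through the $u$-boundary (Lemma~\ref{l.sebastopol} and Claim~\ref{cl.hoy}). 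Your proposal omits both the volume-expansion mechanism and the biaccumulation property, so the coindex-$i$ cycles are not actually closed up. (A secondary inaccuracy: the blender supplies a single family $\mathscr{D}^{ss}$ of $(m-m_s)$-dimensional disks; the different coindices are handled by using $W^{ss}$ of the $u$-index-$2$ saddle versus the full $W^s$ of the $u$-index-$3$ saddle inside that same family, not by separate families of each codimension.)
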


Let us briefly comment on the hypotheses of Theorem~\ref{thm:main}. Items
(1) and (3) are compatibility conditions between the dominated splittings of the saddle and
the blender.
Condition
(2) implies that the unfolding of the homoclinic tangency generates saddles of
$u$-index greater than one, enabling the creation of heterodimensional cycles between these new saddles and the continuations of the saddle in the initial cycle.

The next result extends the previous theorem when the saddles exhibiting the homoclinic tangency
have  $u$-index $n>1$.  A key ingredient in this result is the notion of a double blender, see
Definition~\ref{def:blender}.

\begin{mainthm} \label{thm:main2} Consider $f\in \mathrm{Diff}^r(M^{m+n})$,
$r\geqslant 1, m,n\geqslant 2$, with a homoclinic tangency
associated to a  simple saddle $P$ of $u$-index $n$
that is
\begin{enumerate}
\item  of type $(m_s,n_u)$ with $m_s, n_u \in \{1,2\}$
and
\item
 homoclinically related to a double blender of central
dimensions~$(m_s,n_u)$.
\end{enumerate}
Then there are diffeomorphisms $g$ arbitrarily $C^r$ close to $f$
having simultaneously $C^1$ robust heterodimensional cycles of
coindex $i$ for every $1\leqslant i \leqslant N$, where
$$
N=m_s, \quad \mbox{if} \quad J_P(f)\geqslant 1 \qquad  \mbox{and} \qquad
N=n_u, \quad \mbox{if} \quad  J_P(f)\leqslant 1.
$$
These cycles are associated to hyperbolic basic sets containing $P_g$ and
a hyperbolic basic set of $u$-index $n+i$.
\end{mainthm}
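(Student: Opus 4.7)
The plan is to treat the two subcases $J_P(f)\geqslant 1$ and $J_P(f)\leqslant 1$ separately, using the two components of the double blender in a symmetric way. The $cs$-component handles the first subcase exactly as in Theorem~\ref{thm:main}, while the $cu$-component handles the second through a reduction to the first by time reversal. Indeed, under $f\mapsto f^{-1}$ the saddle $P$ of $u$-index $n$ and type $(m_s,n_u)$ becomes a saddle of $u$-index $m$ and type $(n_u,m_s)$; its leading Jacobian is inverted to $J_P(f^{-1})=J_P(f)^{-1}$; the $cs$- and $cu$-components of the double blender interchange roles, so that the hypothesis of a double blender of central dimensions $(m_s,n_u)$ is self-dual; and homoclinic tangencies, heterodimensional cycles and homoclinic relations are all preserved. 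It is therefore enough to construct, in the case $J_P(f)\geqslant 1$ with $N=m_s$, simultaneous robust cycles of coindex $i=1,\ldots,m_s$, and then to transport the construction to $f^{-1}$ to obtain the second subcase.

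In the case $J_P(f)\geqslant 1$ I would follow the strategy of Theorem~\ref{thm:main}. The double blender contains a $cs$-blender $\Gamma^{cs}$ of central dimension $m_s$, and $P$ is homoclinically related to $\Gamma^{cs}$. Using the simplicity of $P$ (type $(m_s,n_u)$ with $m_s,n_u\in\{1,2\}$) I would put $f$ in a local normal form along the orbit of $P$ that decouples the $m_s$-dimensional leading stable block, the $n_u$-dimensional leading unstable block, and the strong stable and strong unstable directions. The first-return map associated with a long iteration near $P$ followed by the transition through the homoclinic tangency admits a Shilnikov-type rescaling whose limit, because $J_P(f)\geqslant 1$, is a volume-preserving or volume-expanding model on the leading $(m_s+n_u)$-dimensional subspace. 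Analysing this model produces, for every $1\leqslant i\leqslant m_s$ and every $g$ arbitrarily $C^r$-close to $f$, hyperbolic basic sets $\Lambda_i(g)$ of $u$-index $n+i$; the simplicity assumption (real versus complex leading multipliers) is exactly what is needed to ensure that each intermediate $u$-index is effectively realised.

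To close the cycles robustly, the branch $W^u(\Lambda_i(g),g)\cap W^s(P_g,g)\neq\emptyset$ is supplied by the renormalization itself: the new saddles lie near the homoclinic orbit of $P$ and their unstable sets meet $W^s(P_g,g)$ after one return near $P$. The other branch uses the blender: the stable manifold of $\Lambda_i(g)$ contains, by the geometry of the normal form, discs of the family for which the defining property of $\Gamma^{cs}_g$ guarantees a $C^1$-robust intersection with $W^u(\Gamma^{cs}_g)$. Since $P_g$ and $\Gamma^{cs}_g$ are homoclinically related they lie in a common hyperbolic basic set, which therefore has a $C^1$-robust heterodimensional cycle of coindex $i$ with $\Lambda_i(g)$. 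Carrying out this construction simultaneously for $i=1,\ldots,m_s$ yields the coexistence claimed in the theorem.

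The main obstacle is the renormalization step in the generality of a saddle of $u$-index $n\geqslant 2$ with possibly complex leading multipliers on both the stable and the unstable sides: one must ensure that the rescaled model produces genuine saddles of each $u$-index $n+1,\ldots,n+m_s$ and that these saddles fall into the activation region of $\Gamma^{cs}$. The bounds $m_s,n_u\leqslant 2$ together with the simplicity of $P$ keep the required normal form tractable; once the renormalization is in hand, the rest of the argument is a variation on the proof of Theorem~\ref{thm:main} together with standard blender technology.
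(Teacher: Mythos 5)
There is a genuine gap at the step where you close the ``transverse'' branch of the cycle. You assert that $W^u(\Lambda_i(g),g)\cap W^s(P_g,g)\neq\emptyset$ ``is supplied by the renormalization itself'' because the new saddles lie near the homoclinic orbit and their unstable sets meet $W^s(P_g,g)$ after one return near $P$. This is precisely the point that does \emph{not} come for free. Even in the codimension-one case of Theorem~\ref{thm:main} this intersection requires a delicate argument (Lemma~\ref{l.sebastopol}): one needs the saddle to be biaccumulated by its transverse homoclinic points, so that $W^s(P)$ (there of codimension one) separates the reference box, and one then combines the volume expansion of Lemma~\ref{lema1} with the diameter control of Lemma~\ref{l.fromRussiawithconstants} to force the unstable manifold to cross the $s$-boundary. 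For $u$-index $n\geqslant 2$ the stable manifold of $P$ has codimension $n\geqslant 2$, it no longer separates anything, and the biaccumulation mechanism is unavailable; the local unstable manifolds of the new saddles are confined to the small normally hyperbolic manifold $\mathcal{M}_{k,\mathbf{t}}$ and there is no a priori reason for their global unstable manifolds to meet $W^s(P_g,g)$.

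This is exactly why the hypothesis asks for a \emph{double} blender rather than merely a $cs$-blender: in the paper's proof the $cu$-component is used, within the single case $J_P(f)\geqslant 1$, to produce the transverse branch. One first arranges (Proposition~\ref{p.l.n0bis2}) that the tangency saddle $Q$ has its orbit passing through both the $ss$- and the $uu$-superposition domains; the single-round saddles $R^k_{\mathbf{t}}$, $S^k_{\mathbf{t}}$ then shadow the orbit of $Q$, so suitable iterates carry strong unstable disks lying in $\mathscr{D}^{uu}$, and the robust intersection property of the $cu$-blender yields $W^u(R^k_{\mathbf{t}})\cap W^s(\Gamma_{\mathbf{t}})\neq\emptyset$, made transverse by a further perturbation (Lemma~\ref{l.inter11}); the $cs$-component gives the quasi-transverse branch as in Theorem~\ref{thm:main}. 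In your proposal the $cu$-component is invoked only through the time-reversal symmetry between the two subcases, so each subcase effectively uses only one component and the transverse branch is left unjustified. (Your reduction of the case $J_P(f)\leqslant 1$ to $J_P(f)\geqslant 1$ by passing to $f^{-1}$ is consistent with the paper, though note that the new hyperbolic sets then have $u$-index $n-i$ for $f$, so the coindex claim survives the reduction but the ``$u$-index $n+i$'' clause must be read accordingly.)
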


In Theorem~\ref{thm:main2}, we obtain robust cycles associated to the continuations of the blender and new saddles of different $u$-indices. It is an open question if there are robust cycles associated to hyperbolic sets containing these new saddles.

\subsection{Simultaneity of robust  homoclinic tangencies and  robust heterodimensional cycles}
\label{ss.simultaneous}
We now study the concurrence of robust cycles of different types.
We start with an observation.

\begin{remark}[Robust tangencies]\label{rem.Btangenciessim}
Blender-horseshoes are a particular type of $cs$-blenders, see Remark~\ref{r.blenderhorseshoe}.
Under the assumptions of Theorems~\ref{thm:main}   or \ref{thm:main2},
if the $cs$-blender $\Gamma$ of  $f$ is a blender-horseshoe
and
the saddle $P$ is of type $(1,1)$ then \cite[Theorem~4.9]{BonDia:12} implies
that there are diffeomorphisms $g$ arbitrarily $C^r$ close to $f$
 having  $C^1$ robust
tangencies associated to a hyperbolic basic set containing  $P_g$. This leads to $C^r$ open sets
whose closure contains $f$ consisting of diffeomorphisms having simultaneously $C^1$ robust heterodimensional cycles and $C^1$ robust homoclinic tangencies. We do not  know if
under the assumptions of Theorems~\ref{thm:main} or \ref{thm:main2}
and  with the general definition of a $cs$-blender (see Definition~\ref{def:blender})
such
a concurrence
holds.
\end{remark}

We observe that (in dimension three) there are several results assuring  the simultaneous  occurrence of the two types of robust cycles above.
The results in \cite{Li:17}   state conditions on the multipliers of a saddle-focus with a homoclinic tangency to generate simultaneously $C^1$ robust  homoclinic  tangencies and heterodimensional cycles by small $C^1$ perturbations. Similarly,  in~\cite{Bar:xx} it is proved that both types of robust cycles can be $C^1$ approximated
by diffeomorphisms with heterodimensional cycles associated to saddles with nonreal multipliers.
A  nondominated $C^r$ setting, $r\geqslant 2$, with simultaneous  occurrence
of  robust homoclinic tangencies and robust  heterodimensional cycles was explored in \cite{DiaPer:xx}.

We aim to get  the simultaneity of robust cycles in more general settings.
We apply
Theorem~\ref{thm:main} to the class of diffeomorphisms with robust
homoclinic tangencies
in~\cite{Sim:72b,Asa:08}. We see that these diffeomorphisms also display $C^1$ robust
heterodimensional cycles.
The key ingredient  in this result are the $cs$-blenders of
Plykin type obtained as  a ``local'' product of a Plykin repeller by
strong contractions, for details
see  Section~\ref{ss.blendersplykin}. These $cs$-blenders have fixed points
of $u$-index one. Roughly, the robust cycles in the next corollary  
are obtained bifurcating the homoclinic tangency of a fixed point in 
the blenders, see Figure~\ref{fig:asa-ok}.

\begin{mcorol} \label{c.p.asaoka-simon}
Consider
$f\in \mathrm{Diff}^r(M^{m+1})$, $r \geqslant 1,\, m\geqslant 2$,
with a  $cs$-blender $\Gamma$ of Plykin type whose  fixed point  $P$ has
leading multipliers $\lambda$ and $\sigma$
satisfying
$\lambda\, \sigma>1$. Suppose that
$f$ has homoclinic tangency associated to $P$.
Then there is a  $C^r$ open set  $\mathscr{C}$
whose closure contains $f$ such that every $g\in \mathscr{C}$
has simultaneously a  $C^1$ robust homoclinic tangency and
a $C^1$ robust
heterodimensional cycle associated to
the blender $\Gamma_g$.
\end{mcorol}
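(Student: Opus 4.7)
The plan is to verify the hypotheses of Theorem~\ref{thm:main} for the triple $(f,P,\Gamma)$ and then intersect the resulting $C^r$-open set of diffeomorphisms carrying a $C^1$ robust heterodimensional cycle with the $C^1$-open set on which $\Gamma_g$ carries a $C^1$ robust homoclinic tangency via the Plykin-type mechanism.

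First I would unpack the structure of $\Gamma$ as recalled in Section~\ref{ss.blendersplykin}: as a $cs$-blender of Plykin type, $\Gamma$ is locally the product of a Plykin repeller (with one-dimensional stable and one-dimensional unstable bundles) and strong stable contractions in the remaining $m-1$ directions. Consequently $\Gamma$ has central dimension $m_s=1$, and the fixed point $P$ has one unstable multiplier $\sigma$, one leading stable multiplier $\lambda$, and $m-1$ strong stable multipliers of smaller modulus. In particular $P$ is a simple saddle of type $(1,1)$ and of $u$-index one, verifying condition~(1) of Theorem~\ref{thm:main} with $m_s=1$. The hypothesis $\lambda\sigma>1$ yields $J_P(f)=|\lambda||\sigma|>1$, verifying condition~(2), and since $P$ is a saddle of the basic set $\Gamma$ it is homoclinically related to every other saddle of $\Gamma$, verifying condition~(3). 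Together with the assumed tangency at $P$, Theorem~\ref{thm:main} then produces a $C^r$-open set $\mathscr{C}_1$ with $f$ in its closure, each of whose elements $g$ carries a $C^1$ robust heterodimensional cycle of coindex one (the only possibility, since $m_s=1$) involving a basic set containing $P_g$ and a basic set of $u$-index two; using that $P_g\in\Gamma_g$, this cycle may be viewed as associated to $\Gamma_g$, as required by the statement.

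For the simultaneous robust tangency I would invoke the Plykin-type mechanism reviewed in Section~\ref{s.simonasaokatan}: since a Plykin repeller equals the closure of any of its unstable manifolds, the center-unstable lamination of $\Gamma_g$ remains ``thick'' under $C^1$-small perturbations, so any stable disk through $P_g$ is forced to meet it at a tangency. Thus the set $\mathscr{C}_2$ of $g$ for which $\Gamma_g$ has a $C^1$ robust tangency associated to $P_g$ is a $C^1$- (hence $C^r$-) open neighbourhood of $f$, and $\mathscr{C}\eqdef \mathscr{C}_1\cap\mathscr{C}_2$ is the required open set. The step I expect to be most delicate is this final compatibility check: the $C^r$-perturbation produced by Theorem~\ref{thm:main} must be localisable so that the Plykin-type tangency mechanism remains in force on $\Gamma_g$. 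This is plausible because the perturbation can be concentrated near the homoclinic tangency, away from the core of the Plykin factor, but it does require careful bookkeeping of the simultaneous continuations of the basic set hosting the heterodimensional cycle and of the blender $\Gamma_g$ hosting the robust tangency.
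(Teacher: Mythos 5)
Your verification of the hypotheses of Theorem~\ref{thm:main} is correct and is essentially the paper's route: $P$ is simple of type $(1,1)$, $J_P(f)=\lambda\sigma>1$, and $P\in\Gamma$ is homoclinically related to the blender, so Theorem~\ref{thm:main} applies at $f$ and yields an open set $\mathscr{C}_1$ accumulating on $f$ whose elements carry $C^1$ robust coindex-one cycles associated to $\Gamma_g$. (The paper instead applies Theorem~\ref{thm:main} on a dense subset of the tangency neighbourhood $\mathscr{T}$, first using Remark~\ref{r.tangenciesandcontinuations} to move the tangency of $\Gamma_g$ back onto the periodic point $P_g$; your single application at $f$ suffices for the stated conclusion, since intersecting $\mathscr{C}_1$ with a neighbourhood of $f$ preserves the property $f\in\overline{\mathscr{C}}$.)

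The gap is in the robust-tangency half. The mechanism you invoke --- ``any stable disk through $P_g$ is forced to meet the unstable lamination at a tangency'' --- is false: a generic stable disk meets $W^u_{U,\mathrm{loc}}(\Gamma_g,g)=\mathcal{S}_g$ transversally. The actual Simon--Asaoka mechanism (Section~\ref{sss.robusttangencies}) rests on a specific configuration: an $ss$-tube $\mathcal{T}\subset W^s(P,f)$ whose transverse intersection with the normally hyperbolic surface $\mathcal{S}$ is a \emph{closed} curve $\alpha$; because $\mathcal{S}$ is foliated by local unstable leaves of $\Gamma$, a closed curve in $\mathcal{S}$ must be tangent to that foliation somewhere, and this closed-curve configuration persists. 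Under the corollary's bare hypothesis that $f$ has \emph{some} homoclinic tangency associated to $P$, there is no reason for this configuration to be present, hence no reason for your set $\mathscr{C}_2$ to be a $C^1$ neighbourhood of $f$: an isolated quadratic tangency of $W^s(P)$ with $W^u(P)$ is destroyed by perturbation. The paper reads the corollary against the construction of Section~\ref{sss.robusttangencies} (its proof opens by citing the neighbourhood $\mathscr{T}$ on which the robust tangency ``was obtained above''); to make your argument complete you must either take that configuration as part of the hypotheses or show the given tangency can be perturbed into the tube configuration. Note also that for $g$ near $f$ the robust tangency is between $W^s(P_g,g)$ and $W^u_{\mathrm{loc}}(P_0,g)$ for some $P_0\in\Gamma_g$ depending on $g$ --- i.e.\ it is associated to the set $\Gamma_g$, not to the saddle $P_g$ --- which is precisely why the paper needs Remark~\ref{r.tangenciesandcontinuations} before reapplying Theorem~\ref{thm:main} elsewhere in $\mathscr{T}$. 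Your closing worry about localising the perturbation is, by contrast, a non-issue: once the tangency is $C^1$ robust on a neighbourhood, every sufficiently small perturbation preserves it; the delicacy lies entirely in establishing that robustness in the first place.
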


\begin{remark}\label{rem.asaoka-simon}
An ipsis litteris version of Corollary~\ref{c.p.asaoka-simon} can be stated for
 $cs$-blenders of DA type. In this case, the construction is analogous,  starting with
 a DA diffeomorphism defined on the two-dimensional torus, see
 for instance \cite[Chapter 7.8]{Rob:99}. This provides some constraints on the ambient manifold for the occurrence of  blenders of DA type. Note that Plykin attractors are defined locally in a ball, therefore there are no topological constrains for their occurrence.
\end{remark}

\begin{figure}
\centering
\begin{overpic}[scale=0.09,
]{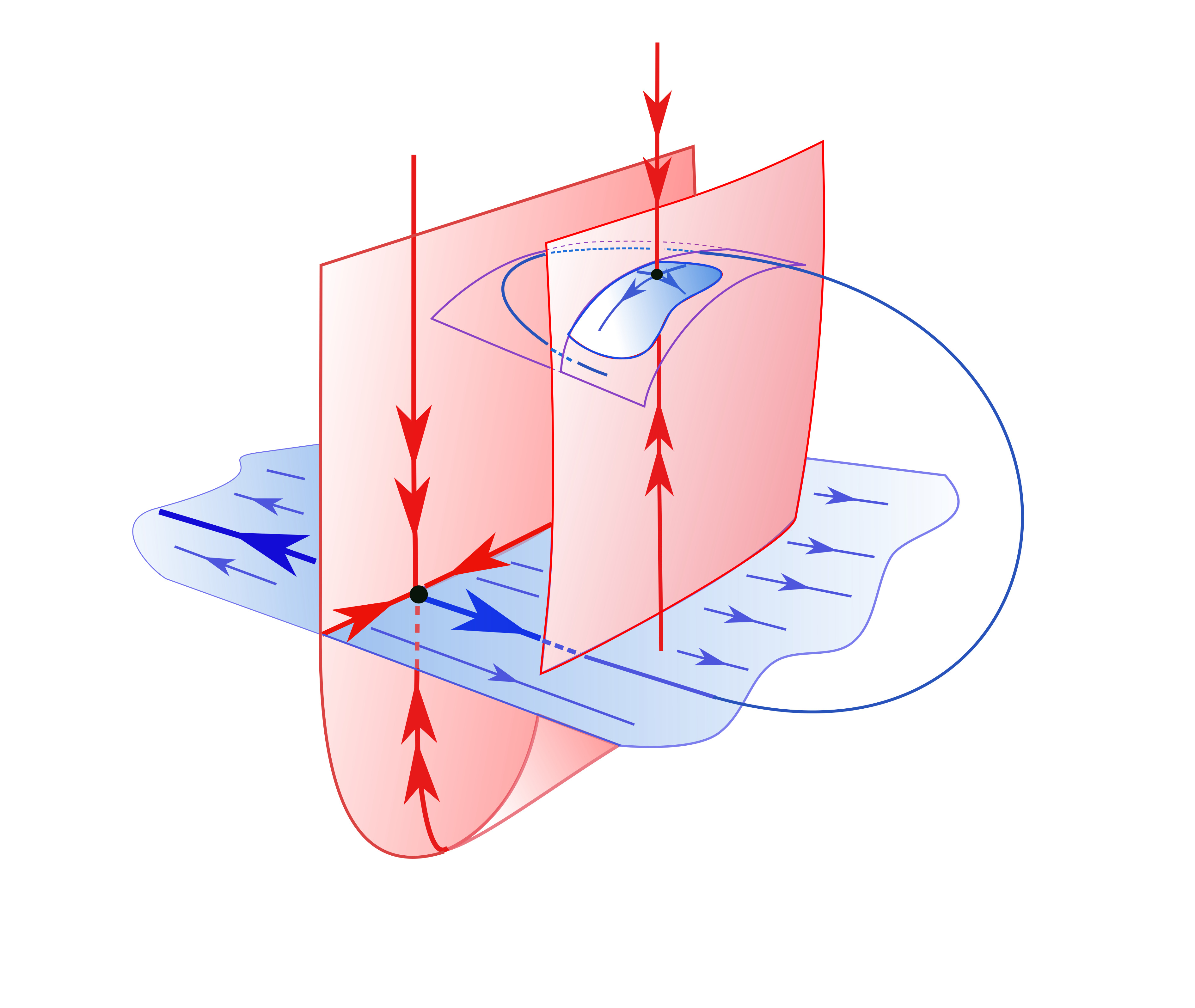}
  \put(97,127){\Large {$P_g$}}
      \put(20,160){\Large $W^u (\Gamma_g)$}
               \put(200,185){\Large $Q_g$}
 \end{overpic}
 \caption{\label{fig:asa-ok}Robust heterodimensional cycles in Corollary~\ref{c.p.asaoka-simon}}
\end{figure}

\subsection{$C^1$ robust heterodimensional cycles arising from horseshoes
with large entropy}
\label{ss.entropyblenders}

We now use Theorems~\ref{thm:main} and \ref{thm:main2}  to prove that
homoclinic tangencies associated to horseshoes with sufficiently large entropy lead
to $C^1$ robust heterodimensional cycles by small $C^1$ perturbations.
For this, we use the results in~\cite{AviCroWil:21} claiming that these horseshoes contain blenders.

We start with some preliminaries from ergodic theory.
In what follows, all measures considered are probability ones.
A measure $\mu$ of $f\in \mathrm{Diff}^r(M^{d})$ is called {\em{hyperbolic}} if it is
$f$-invariant, ergodic, and all its Lyapunov exponents are different from zero.
We list the Lyapunov exponents of $\mu$ (counted with multiplicity) as follows
\begin{equation} \label{e.Lypunov-spectrum}
\chi^{-}_{m}(\mu)\leqslant \dots \leqslant
\chi^{-}_1(\mu)<0<\chi_1^{+}(\mu)\leqslant \dots \leqslant
\chi^{+}_{n}(\mu), \quad m+n=d,
\end{equation}
and say that  $\mu$ has {\em{$u$-index}} $n$. To
emphasise the nature of these exponents, we write
\begin{equation} \label{e.Lypunov-cscu}
\chi^{cs}_\mu=\chi^{-}_1(\mu) \quad \mbox{and} \quad
\chi^{cu}_{\mu}=\chi^{+}_1(\mu).
\end{equation}
We also
consider the
{\em{ stable}} and {\em{unstable Jacobians of $\mu$}} defined by
\begin{equation} \label{e.Jacobians}
 \mathrm{J}^s_\mu \eqdef \mathrm{exp} \left( \sum_{i=1}^{m} \chi^{-}_i(\mu) \right) \quad
\text{and} \quad \mathrm{J}^u_\mu \eqdef   \mathrm{exp} \left(   \sum_{i=1}^{n}
\chi^{+}_i(\mu) \right).
\end{equation}

The previous discussion applies to any ergodic measure supported on a horseshoe, in particular to its measure of maximal entropy.  By a   \emph{horseshoe}
 of a
diffeomorphism we mean  a hyperbolic Cantor set that is
transitive and locally maximal.
In this case, the horseshoe $\Lambda$  has only one measure of maximal entropy
 ${\mu_{\Lambda,\max}} $. We denote by  $ h_{\mathrm{top}}(\Lambda,f)$ the topological entropy of $\Lambda$.

\begin{mcorol}
\label{cor:2} Consider $f\in \mathrm{Diff}^r(M^{m+1})$, $r > 1,\,
m\geqslant 2$,  having a homoclinic tangency
associated to a  simple saddle $P$ of $u$-index one such that
\begin{enumerate}
\item  $P$ is of type $(m_s,1)$ with $m_s\in \{1,2\}$,
\item  $J_P(f)\geqslant 1$, and
\item $P$  is homoclinically related to a horseshoe $\Lambda$
satisfying
$$
  h_{\mathrm{top}}(\Lambda,f) > - \log J^s_{\mu_{\Lambda, \max}} + \frac{1}{2r}\chi^{cs}_{\mu_{\Lambda, \max}}.
$$
\end{enumerate}
Then there are  diffeomorphisms $g$  arbitrarily $C^1$ close to $f$
displaying simultaneously $C^1$ robust heterodimensional cycles
of coindex $i$ for every $1\leqslant i  \leqslant m_s$. These
cycles are associated to $\Lambda_g$ and a transitive hyperbolic set of $u$-index $1+i$.
\end{mcorol}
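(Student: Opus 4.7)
The plan is to reduce Corollary~\ref{cor:2} to Theorem~\ref{thm:main} by using Theorem~\ref{mainthm-hyperbolic-measure} as a machine that manufactures a $cs$-blender out of the horseshoe $\Lambda$. The entropy condition in hypothesis (3) is precisely the quantitative inequality under which the blender-producing results of \cite{AviCroWil:21}, summarized in Theorem~\ref{mainthm-hyperbolic-measure}, apply to the measure of maximal entropy $\mu_{\Lambda,\max}$ supported on $\Lambda$. The statement involves $-\log \mathrm{J}^s_{\mu_{\Lambda,\max}}$ and $\chi^{cs}_{\mu_{\Lambda,\max}}$ in exactly the form that those results require, so no reformulation is needed.

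First I would apply Theorem~\ref{mainthm-hyperbolic-measure} to the pair $(f,\mu_{\Lambda,\max})$, obtaining a diffeomorphism $\tilde f$ arbitrarily $C^1$ close to $f$ such that a continuation of $\Lambda$ contains, or is homoclinically related to, a $cs$-blender $\Gamma$ of central dimension $m_s$. Transitivity of $\Lambda_{\tilde f}$, together with the assumption that $P$ is homoclinically related to $\Lambda$ and the fact that homoclinic relations persist under small $C^1$ perturbations, then yields a homoclinic relation between $P_{\tilde f}$ and $\Gamma$.

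Next I would guarantee that the perturbation $\tilde f$ preserves the homoclinic tangency associated to $P$. Since the blender-producing perturbation can be localized in an arbitrarily small neighbourhood of $\Lambda$, while the tangency involves only a compact orbit of tangential intersections, one can choose the perturbation support disjoint from a neighbourhood of that orbit; alternatively, should the tangency be broken, a further arbitrarily small local perturbation reinstates it, the blender remaining intact by its $C^1$-robustness. Continuity of the spectrum of $Df^\pi(P)$ also ensures that $P_{\tilde f}$ is still a simple saddle of type $(m_s,1)$ with $J_{P_{\tilde f}}(\tilde f)\geqslant 1$ (up to another arbitrarily small extra perturbation in the borderline case $J_P(f)=1$).

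With $\tilde f$ now satisfying every hypothesis of Theorem~\ref{thm:main}, that theorem produces diffeomorphisms $g$ arbitrarily $C^r$ close to $\tilde f$---and hence $C^1$ close to $f$---having simultaneously $C^1$ robust heterodimensional cycles of coindex $i$ for every $1\leqslant i\leqslant m_s$, associated to a hyperbolic basic set containing $P_g$ and a hyperbolic basic set of $u$-index $1+i$. The former basic set can be chosen to contain $\Lambda_g$ by enlarging it inside the homoclinic class of $P_g$, which includes $\Lambda_g$ thanks to the preserved homoclinic relation. The main obstacle I anticipate is the coordination of the two perturbations: creating the blender while keeping the homoclinic tangency alive, under a single $C^1$-small change of $f$. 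This is a localization problem of a by-now standard type, but requires care in order to keep the perturbation supports separated from the tangency orbit and from each other.
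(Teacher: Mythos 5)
Your proposal is correct and follows exactly the route the paper intends: apply Theorem~\ref{mainthm-hyperbolic-measure} to $\mu_{\Lambda,\max}$ to manufacture a $cs$-blender homoclinically related to $P$ while preserving the tangency (the ``Moreover'' clause of that theorem already packages the localization and tangency-preservation issues you worry about), and then invoke Theorem~\ref{thm:main}. The only point worth double-checking is that the blender produced has central dimension $m-1$ with one-dimensional central subbundles, so by Remark~\ref{r.blender}(2) it serves as a blender of central dimension $m_s$ exactly when $m_s\leqslant m-1$ --- a constraint inherited from the paper's own statement rather than introduced by your argument.
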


Similarly, as a consequence of Theorem~\ref{thm:main2}, we obtain
the following.
\begin{mcorol}
\label{cor:3}
 Consider  $f\in \mathrm{Diff}^r(M^{m+n})$, $r > 1$ and  $m,\, n\geqslant 2$,
 having a homoclinic tangency
associated to a  simple saddle $P$ of $u$-index one such that
\begin{enumerate}
\item  $P$ is of type $(m_s,n_u)$ with $m_s, n_u\in \{1,2\}$,
\item $P$  is homoclinically related to a horseshoe $\Lambda$
with
$$h_{\mathrm{top}}(\Lambda,f) >
\max\{- \log J^s_{\mu_{\Lambda,\max}}
                                   + \frac{1}{2r}
                                   \chi_{\mu_{\Lambda, \max}}^{cs}, \ \log J^u_{\mu_{\Lambda, \max}}-
                                   \frac{1}{2r}\chi_{\mu_{\Lambda, \max}}^{cu}
                                   \}.
$$
\end{enumerate}
Then there are diffeomorphisms $g$ arbitrarily $C^1$ close to $f$
having simultaneously $C^1$ robust heterodimensional cycles of
coindex $i$ for every $1\leqslant i \leqslant N$, where
$$
N=m_s, \quad \mbox{if $J_P(f)\geqslant 1$} \qquad  \mbox{and} \qquad
N=n_u, \quad \mbox{if $J_P(f)\leqslant 1$.}
$$
These
cycles are associated to $\Lambda_g$ and a transitive hyperbolic set of $u$-index $n+i$.
\end{mcorol}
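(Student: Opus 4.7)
The plan is to reduce Corollary~\ref{cor:3} to Theorem~\ref{thm:main2} by using the entropy hypothesis to manufacture, via a $C^1$-small perturbation, the double blender of central dimensions $(m_s,n_u)$ required by that theorem. The key tool is Theorem~\ref{mainthm-hyperbolic-measure}, which (following \cite{AviCroWil:21}) extracts blenders from horseshoes whose entropy is large compared to the relevant Lyapunov data of the measure of maximal entropy.

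First I would apply Theorem~\ref{mainthm-hyperbolic-measure} to $f$. The first quantity under the $\max$, $-\log J^s_{\mu_{\Lambda,\max}}+\frac{1}{2r}\chi^{cs}_{\mu_{\Lambda,\max}}$, is precisely the threshold that yields, by an arbitrarily $C^1$-small perturbation, a $cs$-blender of central dimension $m_s$ inside the continuation of $\Lambda$. The second quantity, $\log J^u_{\mu_{\Lambda,\max}}-\frac{1}{2r}\chi^{cu}_{\mu_{\Lambda,\max}}$, is the analogous threshold for $f^{-1}$ (using that topological entropy is invariant under inversion while Jacobians and exponents swap their stable/unstable roles); it therefore produces a $cu$-blender of central dimension $n_u$ inside the continuation of $\Lambda$. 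The two perturbations can be combined into a single $C^1$-small perturbation $g_0$ of $f$ by localising them in disjoint compact regions associated with two homoclinically related sub-horseshoes of $\Lambda$, so that $\Lambda_{g_0}$ contains a double blender of central dimensions $(m_s,n_u)$.

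Next I would verify, for a further small perturbation $g$ of $g_0$, the remaining hypotheses of Theorem~\ref{thm:main2}. Because $P$ is homoclinically related to $\Lambda$ and any two saddles inside a horseshoe are mutually homoclinically related, the double blender is homoclinically related to $P_{g_0}$. The type $(m_s,n_u)$, the simplicity of $P$, and the sign of $\log J_P(f)$ are preserved under $C^1$-small perturbations. The homoclinic tangency of $P$ may be broken by the perturbation that creates the blenders; however, since the orbit of tangency and the supports of the blender-producing perturbations can be arranged to be pairwise disjoint, an additional arbitrarily $C^1$-small perturbation localised near the tangency orbit restores a homoclinic tangency of $P_{g_0}$ without disturbing the double blender, which is $C^1$-robust. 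Applying Theorem~\ref{thm:main2} to the resulting $g$ produces $C^1$ robust heterodimensional cycles of coindex $i$ for every $1\leqslant i\leqslant N$, with the value of $N$ determined (as in Theorem~\ref{thm:main2}) by the sign of $J_{P_g}(g)-1$, and these cycles are associated to $\Lambda_g$ (which contains the double blender and $P_g$) and to the new hyperbolic basic sets of $u$-index $n+i$ provided by Theorem~\ref{thm:main2}.

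The main obstacle is the simultaneous production of the $cs$- and $cu$-blenders inside $\Lambda$ while maintaining the homoclinic relation with $P$ and recovering the tangency: this requires a careful choice of the local supports of the perturbations supplied by the method of \cite{AviCroWil:21}, so that the two perturbations and the one restoring the tangency do not interfere with one another. Everything else is a clean concatenation of previously established results.
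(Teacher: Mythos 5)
Your overall strategy --- reduce Corollary~\ref{cor:3} to Theorem~\ref{thm:main2} by using the entropy hypothesis and Theorem~\ref{mainthm-hyperbolic-measure} (together with its $f^{-1}$ analogue in Remark~\ref{r.Ruelle-inequality}) to manufacture the double blender --- is exactly the intended one. But the step where you actually build the double blender has a genuine gap. You propose to run the $cs$-construction and the $cu$-construction in \emph{two disjoint} sub-horseshoes of $\Lambda$ and then assert that $\Lambda_{g_0}$ ``contains a double blender.'' It does not: what you obtain is a $cs$-blender and a $cu$-blender sitting in disjoint regions. A double blender in the sense of Definition~\ref{def:blender} is a \emph{single} locally maximal hyperbolic set that is simultaneously a $cs$- and a $cu$-blender, with one reference domain and compatible dominated splittings $E^{ss}\oplus E^c\oplus E^u$ (for the $cs$ structure) and the analogous splitting for $f^{-1}$ over the \emph{same} set. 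Two disjoint blenders of the two types, even if homoclinically related, do not verify the hypothesis of Theorem~\ref{thm:main2} as stated; to use them one would have to rework the proof of that theorem (splitting the transverse and quasi-transverse intersection lemmas between two different target sets and then passing to a larger basic set containing both), which you do not do.

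The correct route, which is the one the paper takes via Remark~\ref{r.superblender}, is that a \emph{single} perturbation in the scheme of \cite{AviCroWil:21} produces one affine horseshoe with diagonal constant linear part; since its topological entropy exceeds \emph{both} thresholds in the $\max$ simultaneously, \cite[Theorem C]{AviCroWil:21} applies to it and to its inverse, so this one set is at once a $cs$-blender of central dimension $m-1$ and a $cu$-blender of central dimension $n-1$ with one-dimensional dominated subbundles --- hence, by item (2) of Remark~\ref{r.blender}, a double blender of central dimensions $(m_s,n_u)$ (provided $m_s\leqslant m-1$ and $n_u\leqslant n-1$). No splitting into disjoint supports is needed or wanted. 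Two further small remarks: the preservation of the homoclinic relation with $P$ and of the tangency is already built into the statement of Theorem~\ref{mainthm-hyperbolic-measure}, so your extra tangency-restoration step is redundant; and note that Theorem~\ref{mainthm-hyperbolic-measure} delivers central dimension $m-1$ (not $m_s$ directly), so the reduction to central dimension $m_s$ must pass through Remark~\ref{r.blender}(2), which you do not invoke.
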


We now state  the variation
 of the results in \cite{AviCroWil:21} mentioned above (see  Theorem~\ref{mainthm-hyperbolic-measure}).
A first step of the proof of this variations involves linearisation and
diagonalisation results (see  \cite[Theorems B and 3.2
]{AviCroWil:21}). These results  claim that given any diffeomorphism $f$ with a
horseshoe $\Lambda$ there is a $C^1$ local perturbation $g$  of $f$ having an {\em{affine
horseshoe}} $\widetilde \Lambda$ close to $\Lambda$ in the
Hausdorff distance and whose topological
entropy is also close to the one of $\Lambda$. Moreover, the set $\widetilde
\Lambda$ has a dominated splitting consisting of one-dimensional
bundles. A second step, also borrowed from \cite{AviCroWil:21}, is
that hyperbolic measures with ``large entropy"  (in the
sense of Equation \eqref{e.largeentropy},  see
also Remark~\ref{r.Ruelle-inequality})  provide blenders, see
\cite[Theorems B' and C]{AviCroWil:21}.

Finally, recall that given any hyperbolic measure $\mu$ of a diffeomorphism $f$,
Pesin theory  provides stable and
unstable manifolds, denoted by $W_\mu^{\ast}(x,f)$, $\ast=s,u$,
for $\mu$-almost every point $x$. Note that this requires $f$ to be $C^r$ with $r>1$.
We say that a
transitive hyperbolic set $\Lambda$ and a hyperbolic measure $\mu$
are {\em{homoclinically related}} if there is a periodic orbit
$\mathcal{O}\subset \Lambda$  such that
$W^s(\mathcal{O},f)\pitchfork W^u_\mu(x,f)\not=\emptyset$ and
$W^u(\mathcal{O},f)\pitchfork W_\mu^s(x,f) \not = \emptyset$ for
$\mu$-almost every point $x\in \Lambda$.

\begin{thm} \label{mainthm-hyperbolic-measure}
Consider $f\in \mathrm{Diff}^r(M^{m+n})$, $r>1$, $m \geqslant 2,
n\geqslant 1$,
and a hyperbolic measure $\mu$ of  $f$ with
$u$-index $n$
such that
\begin{equation}
\label{e.largeentropy}
   h_\mu(f) >   -\log \mathrm{J}^s_\mu + \frac{1}{2r} \chi^{cs}_\mu.
\end{equation}
Then there is a sequence $(g_k)$, $g_k\in
\mathrm{Diff}^r(M^{m+n})$,
 of local perturbations  of $f$ supported on
arbitrarily small neighbourhoods of the support of $\mu$ and
converging to $f$ in the $C^1$ topology such that every $g_k$
has a $cs$-blender $\Gamma_k$ such that

\begin{enumerate}
\item\label{i.bl1}
$\Gamma_k$ has
central dimension $d_{cs}=m -1$
and
a dominated splitting consisting of  one-dimensional
subbunddles,
\item \label{i.bl2}
$\Gamma_k$ converges to the  support of  $\mu$ in Hausdorff
distance, and
\item \label{i.bl3}
$h_{\mathrm{top}}(\Gamma_k,g_k)$ converges to $h_{\mu}(f)$.
\end{enumerate}

Moreover, if $f$ has a saddle $P$ that is homoclinically related to the measure
$\mu$ then the sequence $(g_k)_k$ can be chosen
such that the continuation $P_{k}$ of $P$ for $g_k$ is
homoclinically related to the blender $\Gamma_{k}$. In addition, if $P$ has a homoclinic tangency,
then $g_k$ can be chosen with a homoclinic tangency associated to $P_k$.
\end{thm}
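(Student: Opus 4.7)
The plan is to adapt, essentially verbatim, the $C^1$ machinery developed in~\cite{AviCroWil:21} to pass from a hyperbolic measure of large entropy to a $cs$-blender, while simultaneously keeping track of the homoclinic relation with the saddle $P$ and, when applicable, of the homoclinic tangency. Since $r>1$, I would first invoke the Katok--Mendoza approximation theorem: for every small $\varepsilon>0$ there is a horseshoe $\Lambda_\varepsilon$ contained in an $\varepsilon$-neighbourhood of $\mathrm{supp}(\mu)$ whose topological entropy is within $\varepsilon$ of $h_\mu(f)$ and whose measure of maximal entropy has Lyapunov exponents within $\varepsilon$ of those of $\mu$. In particular, the large-entropy inequality~\eqref{e.largeentropy} is inherited by $\mu_{\Lambda_\varepsilon,\max}$ for all sufficiently small $\varepsilon$. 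Katok's construction can moreover be arranged so that whenever $P$ is homoclinically related to $\mu$, the horseshoe $\Lambda_\varepsilon$ contains periodic orbits homoclinically related to $P$.

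The second step is the linearisation and diagonalisation procedure of \cite[Theorems~B and~3.2]{AviCroWil:21}: a local $C^1$ perturbation supported in an arbitrarily small neighbourhood of $\Lambda_\varepsilon$ replaces $f$ by a diffeomorphism $\tilde f$ whose continuation $\widetilde\Lambda_\varepsilon$ is an affine horseshoe Hausdorff-close to $\Lambda_\varepsilon$, with entropy close to $h_{\mathrm{top}}(\Lambda_\varepsilon,f)$ and carrying a dominated splitting into one-dimensional subbundles. Once on this affine model, the quantitative entropy condition~\eqref{e.largeentropy} is precisely what feeds \cite[Theorems~B$'$ and~C]{AviCroWil:21} to produce, by a further local $C^1$ perturbation, a $cs$-blender $\Gamma_\varepsilon$ sitting inside (or as a subhorseshoe close to) $\widetilde\Lambda_\varepsilon$ of central dimension $m-1$. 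The correction term $\tfrac{1}{2r}\chi^{cs}_\mu$ enters as the distortion loss in their covering argument, so the hypothesis $h_\mu(f)>-\log J^s_\mu+\tfrac{1}{2r}\chi^{cs}_\mu$ is exactly what is needed to verify the covering property of a $cs$-blender. Items~\eqref{i.bl1}--\eqref{i.bl3} are then immediate: the one-dimensional dominated splitting is inherited from $\widetilde\Lambda_\varepsilon$, while the Hausdorff and entropy approximations follow since all perturbations are local and $h_{\mathrm{top}}(\Gamma_\varepsilon,g_\varepsilon)$ can be taken as close as one wishes to $h_{\mathrm{top}}(\widetilde\Lambda_\varepsilon,\tilde f)$, hence to $h_\mu(f)$.

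For the final clauses, note that if $P$ is homoclinically related to $\mu$ then the first step yields a saddle $Q\in\Lambda_\varepsilon$ homoclinically related to $P$; this relation is an open transverse intersection condition and persists under the local $C^1$ perturbations above. Since the continuation of $Q$ lies in $\widetilde\Lambda_\varepsilon$, and all saddles of a horseshoe are homoclinically related, $P_{g_\varepsilon}$ ends up homoclinically related to $\Gamma_\varepsilon$. For the tangency clause, one uses that the perturbations are compactly supported in arbitrarily small neighbourhoods of $\mathrm{supp}(\mu)$: if the tangency orbit of $P$ is eventually disjoint from $\mathrm{supp}(\mu)$ one simply chooses the supports to avoid a compact arc of $W^s(P,f)\cap W^u(P,f)$ containing the tangency point; if it accumulates on $\mathrm{supp}(\mu)$, one can instead reintroduce, by an additional arbitrarily small $C^1$ perturbation, a tangency of $P_{g_\varepsilon}$ using the one-dimensional central bundle produced by the diagonalisation step.

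The main obstacle I expect is precisely this last bookkeeping of perturbation supports. One must control, in a coordinated way, the Katok, linearisation, and blender-producing perturbations so that none of them destroys either the transverse heteroclinic intersections certifying the homoclinic relation between $P$ and $\Lambda_\varepsilon$, nor a tangency realisation of $P$. The technical heart is to verify that the perturbations in \cite{AviCroWil:21}, which we are treating largely as a black box, can indeed be localised away from the finitely many orbit pieces needed to encode these two open conditions, and that reopening a tangency (if it is lost by perturbation) respects the structure of the $cs$-blender just constructed.
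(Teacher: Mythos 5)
Your proposal follows essentially the same route as the paper's proof: Katok approximation to a horseshoe homoclinically related to $\mu$, the linearisation/diagonalisation results of \cite{AviCroWil:21} to obtain an affine horseshoe, their Theorem~C to produce the $cs$-blender from the entropy inequality, and locality of the perturbations (keeping $P$ outside the perturbation support) to preserve the homoclinic relation and the tangency. The only cosmetic difference is that the paper explicitly extracts a subhorseshoe avoiding $P$ when $P\in\Lambda$, which is the concrete form of the support bookkeeping you flag as the main obstacle.
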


\begin{remark} According  to item (2)  in Remark~\ref{r.blender}, the
blenders $\Gamma_k$ in Theorem~\ref{mainthm-hyperbolic-measure}
are $cs$-blenders of central dimension $k$ for every
$k=1,\dots,m-1$ (here $m$ is the number of negative Lyapunov
exponents of $\mu$ counted with multiplicity).
\end{remark}

\begin{remark} \label{r.Ruelle-inequality}
A result analogous to Theorem~\ref{mainthm-hyperbolic-measure}
holds under the assumption
\begin{equation} \label{e.almost-Pesin}
  h_{\mu}(f) > \log \mathrm{J}^u_\mu - \frac{1}{2r}\chi^{cu}_\mu
\end{equation}
called in \cite{AviCroWil:21} {\em{almost Pesin formula.}} This
condition leads to $cu$-blenders with similar properties as  in Theorem~\ref{mainthm-hyperbolic-measure}.
In particular,
 if $\mu$ is a  hyperbolic measure
satisfying the  Pesin entropy formula (that is, $h_{\mu}(f)=\log
\mathrm{J}^u_\mu$) then there are arbitrarily small $C^1$
perturbations $g$ of $f$
 having $cu$-blenders $\Gamma_g$ satisfying conditions (1)--(3) in Theorem~\ref{mainthm-hyperbolic-measure}.
 Clearly, in item (1) the central dimension $d_{cu}$ of the blender
  is
$k=1,\dots, n-1$, where $n$ is the $u$-index of $\mu$.
\end{remark}

\begin{remark} \label{r.superblender} Double blenders
(see Definition~\ref{def:blender})
with
central dimensions $(m-1,n-1)$,
where $m$ and $n$ are as in the previous remarks,
 and dominating splittings
into one-dimensional subbundles are obtained when
\begin{equation*} \label{e.almost-Pesin-double}
  h_{\mu}(f) > \max\{ -\log \mathrm{J}^s_\mu + \frac{1}{2r}
  \chi^{cs}_\mu, \
 \log \mathrm{J}^u_\mu - \frac{1}{2r}\chi^{cu}_\mu\}.
\end{equation*}
\end{remark}

\begin{terminology}\label{notation1}
Throughout this paper, we will use the term \emph{perturbation} only to refer
to arbitrarily small ones.
\end{terminology}

\subsection*{Organisation}
This paper is organised as follows. In Section~\ref{s.blenders},  we introduce and discuss
$cs$ and $cu$-blenders. In Section~\ref{s.dominatedhorseshoed},
we study the generation of simple saddles with
some prescribed domination properties. In Section~\ref{s.dynamicsatthetangency}, we analise
the return maps in a neighbourhood of the homoclinic tangency and explore
the expanding-like properties derived from the expanding leading Jacobian hypothesis.
Section~\ref{s.indexvariation} introduces unfolding families and uses them to get new saddles
of different $u$-indices. Theorems~\ref{thm:main} and ~\ref{thm:main2}  are proved in Section~\ref{s.proofofmaintheorems}, for that we relate the saddles with different $u$-indices previously obtained with the blender.
 Section~\ref{s.simonasaokatan} is dedicated to Simon-Asaoka's examples.
Section~\ref{s.appendix}
is an appendix dedicated to the proof of Theorem~\ref{mainthm-hyperbolic-measure}.
Finally, Section~\ref{s.Sinovac} is another appendix about dominated splittings.

\section{Blenders} \label{s.blenders}
There are several definitions of
blenders, each one adapted to an appropriate
 context  and emphasising some
properties.  We do not aim to discuss and compare these definitions.
As mentioned above,  blenders are hyperbolic sets with a geometric structure guaranteeing
intersections between a family of ``stable"  disks and the local unstable manifold of the blender.
A fundamenta property of blenders is their $C^1$ robustness.
 A relevant point in our
Definition~\ref{def:blender} is the existence of a distinctive saddle. As far as we
know, distinctive saddles exist in any known example of blenders, their existence  is an
 explicit requirement in~\cite[Definition A1]{Ber:16}\footnote{ Let us summarise some settings where blenders play key roles.
 Blenders were initially defined  as having  central dimension one. Currently, there  are versions
 (as the ones here) where the central dimensions are greater than one.
Applications of  blenders of central dimension one include: robustly transitive
dynamics~\cite{BonDia:96}, robust heterodimensional cycles~\cite{BonDia:08,LiTur:xx},
robust homoclinic tangencies~\cite{BonDia:12}, stable ergodicity~\cite{RRTU}, and
construction of nonhyperbolic ergodic measures~\cite{BocBonDia:16}, among others. Blenders with larger central dimensions were introduced
in~\cite{NasPuj:12,BarRai:18} to study
 instability problems in symplectic dynamics, in~\cite{BarKiRai:14}
 to obtain
 robust heterodimensional cycles of large coindex,  and in~\cite{BarRai:17,Asa:xx} to
 get robust tangencies of large codimension.
Blenders of large central dimension also appear in the study of  ergodicity of conservative
partial hyperbolic systems \cite{AviCroWil:21},
of  holomorphic dynamics \cite{Taf:21,Duj:17,Bie:20}, and of parametric families of maps  (endomorphisms in \cite{PB},\cite{BCP} and diffeomorphisms~\cite{BarRai:21}).}.

Our definition of a  blender involves the
concepts of hyperbolicity and partial hyperbolicity, stable and
unstable bundles,  strong invariant manifolds, and domination. For
these concepts, see the appendix in Section~\ref{s.Sinovac}. In what follows, we
write dominated splittings $E_1  \oplus \cdots \oplus E_k$ with
several bundles in such a way the bundle $E_i$ is more ``contracting''
than the bundle $E_{i+1}$.

We will use the following notation for the local invariant manifolds
of a hyperbolic set $\Lambda$ of a diffeomorphism $f$ relative to a neighbourhood $U$
of $\Lambda$. We let
\begin{equation}
\label{e.localmanifolds}
  W^u_{U, \mathrm{loc}}(\Lambda,f) \eqdef \{ x\in U \colon f^i (x) \in U\, \mbox{for every $i\leqslant 0$}\}.
\end{equation}
The set
$  W^s_{U, \mathrm{loc}}(\Lambda,f)$ is defined in the obvious way. A special case occurs when $\Lambda$ is
a periodic orbit.

\begin{defi}[$cs$-, $cu$-blender, and double blender]
\label{def:blender}
{\em{Let $f\in \mathrm{Diff}^1(M^d)$, $d\geqslant 3$.
A hyperbolic and transitive set $\Gamma \subset {M}^d$ is a
\emph{$cs$-blender } if the following holds:}}

\smallskip

\noindent {\rm{(a) (local maximality)}}
{\em{There is an open neighbourhood $U$ of $\Gamma$ such that
$$
\Gamma = \bigcap_{i\in\mathbb{Z}} f^i (\overline{U}).
$$}}

\smallskip

\noindent
{\rm{(b) (partial hyperbolicity)}}
{\em{There is an $Df$-invariant dominated splitting with three nontrivial bundles
defined over $\Gamma$
$$
T_\Gamma M^d = E^{ss}\oplus E^c \oplus E^{u},
$$
where
 $E^s=E^{ss}\oplus E^c$ and
 $E^u$ are the stable and unstable bundles of $\Gamma$.
 We write
$$
d_{cs}\eqdef \dim E^{c}\geqslant 1
\quad  \mbox{and} \quad d_{ss}\eqdef \dim E^{ss}\geqslant 1.
$$}}

\smallskip

\noindent
 {\rm{(c) (open set of embedded disks)}}
{\em{There is an open set $\mathscr{D}^{ss}$ of $C^1$-embeddings of
$d_{ss}$-dimensional disks into $M^d$  such that:
 \begin{enumerate}
\item[i)] \label{c1}
  {\rm{(distinctive saddle)}}
There is a saddle $Q^\ast\in
\Gamma$ whose strong stable manifold $W^{ss}(Q^\ast,f)$
contains a disk of $\mathscr{D}^{{ss}}$ containing $Q^\ast$ in its interior.
\item[ii)] \label{c2}
{\rm{(robust intersections)}} There is a
 $C^1$-neighbour\-hood $\mathscr{U}$ of $f$ such that
for every $g\in \mathscr{U}$
the continuation $\Gamma_g$ of $\Gamma$ satisfies
$$
  W^u_{U, \mathrm{loc}}(\Gamma_g,g) \cap D \not = \emptyset,
\quad \mbox{for every} \quad D\in \mathscr{D}^{{ss}}.
$$
  \end{enumerate}

The set $U$ is called a {\emph{reference domain}} of the blender.
The sets $\mathscr{D}^{{ss}}$ and $$B^{ss}\eqdef
\mathrm{int}\Big(\bigcup_{D\in\mathscr{D}^{ss}}D\cap U\Big)$$ are
called {\emph{superposition region}}
 and {\emph{superposition
domain}} of the blender, respectively. The point $Q^\ast$ is {\emph{a
distinctive saddle}} of the blender. We say that $d_{cs}$ is the
{\em{central dimension}} of the blender.

A \emph{$cu$-blender} of $f$ is $cs$-blender for $f^{-1}$.  For a
$cu$-blender, $d_{cu}$, $\mathscr{D}^{{uu}}$, and $B^{uu}$ are
defined as above.

 A \emph{double blender of central dimensions
$(d_{cs},d_{cu})$} is a hyperbolic set that is simultaneously a
$cs$-blender of central dimension $d_{cs}$ and ${cu}$-blender of
central dimension $d_{cu}$.
In particular, a double blender has a pair of distinctive saddles $Q^\ast_{cs}$ and $Q^\ast_{cu}$
as well as $ss$- and $uu$-superposition
regions and domains. }}
  \end{defi}

Note that $cs$-blenders with central dimension $d_{cs}\geqslant 1$ can occur only in dimension
$1+d_{cs}+1 \geqslant 3$. Similarly, double blenders with central dimensions $(d_{cs}, d_{cu})$ can appear only in dimension
$1+d_{cs}+d_{cu}+1\geqslant 4$.

\begin{remark}[Blender-horseshoes]\label{r.blenderhorseshoe}
Blender-horseshoes were introduced in \cite{BonDia:12}.
They have a one-dimensional central dimension. Their key property is that
iterations of the disks in the superposition region provide new disks of that region.
Every blender-horseshoe is a $cs$- or a $cu$-blender with central one-dimensional direction.
\end{remark}

\begin{remark}[Properties of blenders]
\label{r.blender}
Consider $f\in \mathrm{Diff}^r(M)$, $r \geqslant 1$, with
a $cs$-blender $\Gamma$ of central dimension $d_{cs}$,
 reference domain $U$,
partially hyperbolic splitting
\begin{equation}
\label{e.dsblender}
T_\Gamma M^d = E^{ss}\oplus E^c \oplus E^{u}, \qquad  d_{cs}= \dim E^c,
\quad
d_{ss} = \dim E^{ss},
\end{equation}
superposition region $\mathscr{D}^{ss}$, and distinctive saddle $Q^\ast$.

\medskip

\noindent
(1) {\bf Continuations of
blenders:}
 Having a $cs$-blender is a $C^1$ open condition (and
hence $C^r$ open). For every $g$ sufficiently $C^1$
close to $f$ the continuation $\Gamma_g$ of $\Gamma$ is also a
$cs$-blender with the same reference domain  $U$, the same
superposition region $\mathscr{D}^{ss}$, and the same central
dimension. Moreover, if $Q^\ast$ is a distinctive saddle of
$\Gamma$ then its continuation $Q^\ast_g$ is also a distinctive saddle
of $\Gamma_g$.

\smallskip

\noindent
(2)  {\bf Central dimensions of a $cs$-blender:}
The hyperbolicity of  $\Gamma$ implies that if
$D$ is a disk in $\mathscr{D}^{ss}$
 and
$X\in
W^{u}_{U, \mathrm{loc}}(\Gamma,f)\cap D$ then
there is $Z\in\Gamma$ such
that $X\in W^{u}_{U, \mathrm{loc}}(Z,f)\cap D$. Moreover, the codimension of
$$
T_X W^{u}_{U, \mathrm{loc}} (Z,f)+T_XD
$$
is at least $d_{cs}$ and is  equal to $d_{cs}$ if and only if
the intersection between $W^{u}_{U, \mathrm{loc}}(Z,f)$ and $D$ at $X$ is
quasi-transverse (i.e., $T_X W^{u}_{U, \mathrm{loc}}(Z,f)\cap T_XD=\{\bar 0\}$).

Assume now that the bundle $E^c$ of  $\Gamma$ splits in a
dominated way as follows
\begin{equation}
\label{e.splittingcentral}
E^c = E^c_1 \oplus \cdots \oplus  E^c_k.
\end{equation}
For each $1\leqslant r\leqslant \ell \leqslant k$, consider the bundles and the dimensions
$$
E^c_{r,\ell} \eqdef E^c_r \oplus \cdots \oplus  E^c_{\ell},
\qquad
d_{r,\ell} \eqdef \dim E^c_{r,\ell} .
$$
We claim that
  $\Gamma$ is also a $cs$-blender of central dimension
  $d_{r,k}$ for each $r=2, \dots,k$. Using \eqref{e.splittingcentral},
for each $r \in \{2, \dots, k\}$, we get a dominated splittings of $\Gamma$ of the form
\begin{equation}
\label{e.newsplitting}
T_\Gamma M^d =
(E^{ss}\oplus E^c_{1,r-1} )
\oplus
E^c_{r, k}
 \oplus E^{u},
 \end{equation}
where $E^{ss}\oplus E^c_{1,r-1}$ is a strong stable bundle and
$E^c_{r, k}$ is a center stable bundle  of dimension $d_{r,k}$.

To see that  $\Gamma$ is a $cs$-blender of central dimension
  $d_{r,k}$ it is sufficient to consider the splitting in   \eqref{e.newsplitting}
 and any family of disks $\widehat{\mathscr{D}}^{ss}$ of dimension $d_{ss}+ d_{1,r-1}$
 containing the initial family $\mathscr{D}^{ss}$ and such that
 some disk of this family contains $Q^\ast$ in its interior and is contained in
 the strong stable manifold $W^{ss}(Q^\ast)$ (now considered with respect the
strong stable bundle $E^{ss}\oplus E^c_{1,r-1}$).
  By the observations above, the intersections properties in
 (c) hold.

 In the special case when $E^c$ splits into one-dimensional bundles in a dominated way, we have that
 $\Gamma$ is a $cs$-blender of central dimensions $1, \dots, d_{cs}$.
 This sort of blender was introduced in \cite{NasPuj:12,BarKiRai:14} and called  \emph{superblender} in \cite{AviCroWil:21}.

\smallskip

\noindent
(3)  {\bf Homoclinic relations:}
Note that every pair of saddles of $\Gamma$ are
homoclinically related. In particular, every saddle of $\Gamma$ is
homoclinically related to any distinctive saddle of $\Gamma$. Thus any
saddle homoclinically related to the blender  is
homoclinically related to
any distinctive saddle of the blender.
\end{remark}

The following remark is a standard consequence of the inclination lemma and the comments above.

\begin{remark}[Tangencies associated to distinctive saddles]\label{r.tangenciesandcontinuations}
Let $f\in \mathrm{Diff}^r(M)$ and  consider two saddles  $R$ and $S$ of $f$ which are homoclinically related. Suppose that
$R$ has a homoclinic tangency. Then there is a $C^r$ perturbation $g$ of $f$ such that $S_g$ has a homoclinic tangency. We get the following consequence. Fix a $cs$-blender $\Gamma$ of $f$,
a distinctive saddle $Q^\ast$ of $\Gamma$, and
a saddle $P$ homoclinically related
to $\Gamma$.
Then if $P$ has a homoclinic tangency  there is a $C^r$  perturbation $g$ of $f$ having a homoclinic tangency associated to the distinctive saddle $Q^\ast_g$ of the blender $\Gamma_g$.

A variation of the above assertion is the following. Consider  a hyperbolic basic set $\Lambda$ of $f$ (i.e., locally maximal in
an open neighbourhood of it) with a homoclinic tangency (not necessarily associated to a saddle). Given any periodic saddle $R\in \Lambda$ there is a $C^r$ perturbation $g$ of $f$ with  a homoclinic tangency associated to  $R_g\in \Lambda_g$.
\end{remark}


\section{Homoclinic relations of simple saddles with prescribed type}
\label{s.dominatedhorseshoed}

This section aims to prove
Proposition~\ref{p.Q} about the generation of simple saddles with
some prescribed domination. The relevant cases lead to simple
saddles of type $(2,1)$ and $(2,2)$.

For the next definition, note that if a saddle $P$ of a diffeomorphism
$f$ has $u$-index one then $W^u_{\mathrm{loc}}(P,f)\setminus
\{P\}$ has two connected components\footnote{A local unstable manifold of $P$ is
any disk contained in $W^u (P,f)$ of the same dimension as $W^u(P,f)$ that
contains $P$ in its interior. A local stable manifold is similarly defined.}.
In this case, we say that
the saddle $P$ is \emph{biaccumulated by its transverse homoclinic
points} if $W^s(P,f)$ transversely intersects both connected
components of $W^u_{\mathrm{loc}}(P,f)\setminus\{P\}$.
Note that this definition does not depend on the choice of the local manifold.
We will use this  property
in Section~\ref{s.returnmapcoidmensionone}
 to define return
maps associated to a homoclinic tangency of a saddle with $u$-index one.

For the next proposition, recall that a saddle $P$ of $f$ has {\emph{simple spectrum}}
if the eigenvalues of $Df^\pi (P)$ are all real, positive,  and have multiplicity one, here $\pi$ is the period of $P$.

\begin{prop} \label{p.Q}
Consider $f\in \mathrm{Diff}^r(M^{m+n})$, $r\geqslant 1$,
$m\geqslant 2$, $n\geqslant 1$, with a pair of
saddles $P$ and $P'$ of $u$-index $n$  which are homoclinically related.
Assume that $P$ is simple of type $(m_s,n_u)$ and that $P'$ has simple spectrum.
Then given any neighbourhood $V$ of $P'$ there is a $C^r$
perturbation $g$ of $f$ having a saddle $Q \in V$ such
that
\begin{enumerate}
\item $Q$ is simple of type $(m_s,n_u)$ and
$\log J_P (f)\cdot \log J_{Q}(g)>0$,
\item  $Q$ is homoclinically related to the continuations $P_g$ and
$P'_g$.
\item
If $n=1$ then $Q$ is
biaccumulated by its transverse homoclinic points.
\end{enumerate}
Finally, if  $P$ has a homoclinic tangency then $Q$ can be taken
with a homoclinic tangency.
\end{prop}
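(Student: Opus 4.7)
The plan is to realise $Q$ as a suitably chosen periodic point of a horseshoe $\Sigma$ containing both $P$ and $P'$, with an itinerary dominated by the return map at $P$, and then read off (1)--(3) from this construction.

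\emph{Horseshoe.} The homoclinic relation between $P$ and $P'$ together with the Birkhoff--Smale theorem produces a transitive, locally maximal hyperbolic set $\Sigma\subset M$ containing both saddles and admitting a semiconjugacy with a subshift of finite type. For every large integer $k$ this allows us to realise a periodic orbit of $\Sigma$ whose itinerary is, schematically, ``a short transit from a neighbourhood of $P'$ to a neighbourhood of $P$, $k\pi_P$ iterates shadowing the orbit of $P$, a short transit back to a neighbourhood of $P'$, and a bounded return closing the orbit.'' Labelling by $Q=Q_k$ the point of this orbit closest to $P'$, one can arrange $Q_k\in V$ for every $k$, with period $\pi_{Q_k}=k\pi_P+\ell_0$ and $\ell_0$ uniformly bounded in $k$.

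\emph{Spectral analysis.} In adapted coordinates along the orbit,
$$
Df^{\pi_{Q_k}}(Q_k)=C_k\cdot \bigl(Df^{\pi_P}(P)\bigr)^{k}\cdot D_k,
$$
where $C_k,D_k$ are bounded in $k$ and converge to fixed matrices determined by the heteroclinic orbits used in the construction. The only $k$-dependent factor is $(Df^{\pi_P}(P))^k$, which preserves the dominated spectral structure of $Df^{\pi_P}(P)$ and amplifies the moduli gaps between the consecutive leading and sub-leading blocks exponentially in $k$. A standard graph-transform / cone-field argument then yields for $Df^{\pi_{Q_k}}(Q_k)$ an invariant block decomposition with the same dimensions as that of $Df^{\pi_P}(P)$. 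When $m_s=2$ (resp.\ $n_u=2$), the leading stable (resp.\ unstable) block is a small perturbation of the rotation-scaling $(Df^{\pi_P}(P))^k$ restricted to the corresponding $2$-plane of $P$, so for $k$ large (after possibly excluding a set of density zero of resonant $k$'s, avoided by an extra arbitrarily small $C^r$ perturbation of the heteroclinic connections if needed) it retains a genuine complex-conjugate pair of eigenvalues. Hence $Q_k$ is simple of type $(m_s,n_u)$. Moreover, the product of the moduli of the leading eigenvalues satisfies
$$
\log J_{Q_k}(f)=k\log J_P(f)+O(1),
$$
so its sign equals that of $\log J_P(f)$ for all large $k$, proving~(1). (If $J_P(f)=1$, a preliminary arbitrarily small $C^r$ perturbation on a fundamental domain of $P$ moves $J_P$ off $1$ without affecting the rest of the argument.)

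\emph{Homoclinic relations, biaccumulation, tangency.} Since $Q_k,P,P'\in\Sigma$ and $\Sigma$ is transitive and hyperbolic, the three saddles are mutually homoclinically related, which is a $C^1$-open property and hence passes to the continuations for every small $C^r$ perturbation $g$; this gives~(2). For~(3), when $n=1$, transitivity of $\Sigma$ implies that saddles of $\Sigma$ accumulate on $Q_k$ along both connected components of $W^u_{\mathrm{loc}}(Q_k,g)\setminus\{Q_k\}$, each yielding a transverse homoclinic intersection with $Q_k$, whence biaccumulation. Finally, if $P$ has a homoclinic tangency, Remark~\ref{r.tangenciesandcontinuations} applied inside $\Sigma$ provides a further arbitrarily small $C^r$ perturbation producing a homoclinic tangency associated to $Q_g$, completing the proof.

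\emph{Main obstacle.} The genuinely delicate step is the spectral analysis above: in the cases with two-dimensional leading blocks one must ensure that, after conjugation by $C_k$ and $D_k$ and exponentiation in $k$, these blocks retain a complex-conjugate pair of eigenvalues rather than splitting into two real eigenlines. This is where the strict dominance of the leading multipliers of $P$ over the sub-leading ones and a quantitative trace/determinant analysis on the invariant $2$-plane (together with, if necessary, a tiny $C^r$ perturbation to avoid the exceptional $k$'s where the discriminant vanishes) become crucial.
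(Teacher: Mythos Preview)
Your overall architecture---build a horseshoe containing $P$ and $P'$, pick periodic points $Q_k$ whose itinerary is dominated by the linear part at $P$, and read off the leading Jacobian and homoclinic relations from domination---matches the paper's. The genuine gap is exactly the step you flag as delicate: showing that when $m_s=2$ (or $n_u=2$) the two-dimensional leading block of $Df^{\pi_{Q_k}}(Q_k)$ has nonreal eigenvalues. Your claim that this block is ``a small perturbation of the rotation-scaling $(Df^{\pi_P}(P))^k$'' is not correct. Writing the central $2\times 2$ block schematically as $\lambda^k R_{k\theta}\cdot S_k$ with $S_k\to S$ a fixed nontrivial transition matrix, this product is \emph{not} close to $\lambda^k R_{k\theta}$: conjugating or composing a rotation-scaling by a generic bounded linear map destroys the isotropy and can (and typically will, for a positive proportion of $k$) produce two real eigenvalues. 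Perturbing the heteroclinic connection changes $S$ but gives you no control over the sequence $k\theta \pmod{2\pi}$, so your proposed fix does not close the gap.

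The paper's mechanism is different and essential: it perturbs the \emph{linear part of $f$ at $P$} by an extra rotation $R_\alpha$ in the central plane, so the central block becomes $\lambda^k R_{k(\theta+\alpha)}\cdot S_{k,\alpha}$ with $S_{k,\alpha}\to S$. For fixed large $k$, as $\alpha$ ranges over an arbitrarily small interval the angle $k\alpha$ sweeps a large arc; a saddle-node argument on the induced projective (circle) map then guarantees a parameter $\alpha$ at which the two real eigenlines collide and become a complex pair (this is Claim~\ref{c.r.complex} and the heart of Lemma~\ref{l.nonreal}). A preliminary ``two-loop'' choice of the homoclinic point is also used to make the transition orientation-preserving on the central plane. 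Without some version of this active rotation perturbation, item~(1) in the $(2,\ast)$ and $(\ast,2)$ cases is not established.
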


For simplicity, in what follows, let us
assume that the saddles have period one.

We will use this  proposition  to get saddles
with nonreal central eigenvalues.
Thus the relevant case occurs when $m_s$ or $n_s$ (or both) are different from one.
 In the
literature  many results provide conditions for the
abundance of saddles with simple  spectrum
after perturbations. In particular, the case $m_s=n_u=1$ is well-known
(see, for instance, the linearisation result in \cite[Theorem 3.2]{AviCroWil:21}).
 Thus  we consider the cases when $m_s \, n_u >1$ and skip
 the case $(1,1)$.

Let us note that the condition on the existence of a saddle homoclinically related to
$P$ with simple spectrum is not a restriction in our context.
Indeed,  if a saddle $P$ is homoclinically related to a saddle $\widetilde P$,
 given any neighbourhood $V$ of $\widetilde P$
after
a $C^r$ perturbation of $f$ we can assume that there is a saddle $P'\in V$
with simple spectrum that is
homoclinically related to $P$ and  $\widetilde P$.

The key step in the proof of Proposition~\ref{p.Q} is the next lemma:

\begin{lem}\label{l.nonreal}
Consider $f\in \mathrm{Diff}^r(M^{m+n})$ and saddles $P$ and
$P'$ as in Proposition~\ref{p.Q}.  Assume
that $P$ is of type $(2,n_u)$  and $C^r$ linearisable in a
neighbourhood $U$. Given any
 neighbourhood $V$ of $P'$
 there is a  $C^r$
perturbation $g$ of $f$ with a hyperbolic transitive set  $\Lambda_g$
 containing $P_g, P_g',$ and a simple saddle $Q$ of type
 $(2,n_u)$ such that the
 orbit of $Q$ intersects $U$ and $V$ and satisfies
$$
 \log J_{P_g} (g) \cdot \log J_{Q} (g)>0.
 $$
\end{lem}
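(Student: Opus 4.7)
The plan is to produce $Q$ as a periodic point of $g$ whose orbit makes one long stay inside $U$ near $P_g$ and one brief excursion through $V$ near $P_g'$, and to exploit the $C^r$ linearisation in $U$ to compute the return map differential explicitly. First, since $P$ and $P'$ are homoclinically related, a Birkhoff--Smale construction applied to a transverse heteroclinic pair yields a transitive hyperbolic basic set $\Lambda$ containing both saddles, with symbolic dynamics rich enough to realise any prescribed itinerary alternating between $U$ and $V$. Along this step we absorb into $g$ two arbitrarily small $C^r$ perturbations, supported away from $U$ so as to preserve the linearisation: one making $\log J_{P_g}(g)\ne 0$ in case $\log J_P(f)=0$, and one putting the heteroclinic data in general position (see the obstacle below).

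Next, for each large integer $N$ pick in $\Lambda_g$ a periodic point $Q_N$ of period $\pi_N=N+N_0$, $N_0$ fixed, whose orbit stays in $U$ for $N$ consecutive iterates and visits $V$ once. In the linearising chart, $g|_U$ is the linear map
\begin{equation*}
A=A_{ss}\oplus \lambda R(\theta)\oplus A_{cu}\oplus A_{uu},
\end{equation*}
where $\lambda R(\theta)$ is the $2\times 2$ central stable rotation block (since $P$ is of type $(2,n_u)$), $A_{cu}$ is either the scalar $\gamma$ or the rotation $\gamma R(\varphi)$ according as $n_u=1$ or $n_u=2$, and $A_{ss}$, $A_{uu}$ are diagonal blocks whose norms are strictly less than $\lambda$ and strictly larger than $\gamma$ respectively. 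Identifying tangent spaces along the fixed excursion through $P_g'$ with the linearising chart, the differential of the return map factors as
\begin{equation*}
Dg^{\pi_N}(Q_N)=H\cdot A^N,
\end{equation*}
where $H$ is a fixed invertible matrix independent of $N$.

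By the dominated-splitting estimates, for $N$ large the leading central stable $2\times 2$ block of $HA^N$ equals $\lambda^N K_{cs}\,R(N\theta)$ up to an exponentially small error, where $K_{cs}$ is the $(cs,cs)$-block of $H$; likewise for the central unstable block when $n_u=2$. The discriminant of the characteristic polynomial of $K_{cs}R(N\theta)$ has the form $a^{2}\cos^{2}(N\theta-\phi_{0})-4\det K_{cs}$ and is strictly negative on an open arc of the circle provided $\det K_{cs}>0$ and $a^{2}>4\det K_{cs}$. A preliminary arbitrarily small perturbation renders $\theta/2\pi$ and $\varphi/2\pi$ irrational; then Weyl equidistribution yields infinitely many $N$ for which both leading central blocks have non-real complex-conjugate eigenvalues. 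Domination persists for such $N$, so $Q_N$ is simple of type $(2,n_u)$. Computing moduli, each leading stable multiplier has modulus $\lambda^N|\det K_{cs}|^{1/2}$ and each leading unstable multiplier has modulus $\gamma^N|\det K_{cu}|^{1/n_u}$, whence
\begin{equation*}
\log J_{Q_N}(g)=N\log J_{P_g}(g)+O(1).
\end{equation*}
For $N$ sufficiently large this forces $\log J_{P_g}(g)\cdot \log J_{Q_N}(g)>0$. Finally, $Q_N\in\Lambda_g$ is automatically homoclinically related to $P_g$ and $P_g'$ inside the horseshoe, and its orbit meets $U$ and $V$ by construction.

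The main obstacle is to arrange that the fixed matrix $H$ coming from the excursion satisfies the sign condition $\det K_{cs}>0$ (and analogously for $K_{cu}$ when $n_u=2$) together with the amplitude inequality $a^{2}>4\det K_{cs}$ on the trace oscillation. The sign issue is handled by an orientation-flipping variant of the excursion, e.g.\ by inserting an extra homoclinic loop near $P_g'$ into the itinerary; the amplitude inequality is a codimension-one open condition on $H$ achieved by the generic perturbation in the first step. These eigenvalue-twisting arguments are in the spirit of the constructions of Bonatti--D\'\i az and Li--Turaev.
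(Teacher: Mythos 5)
Your proposal is correct in substance and shares the paper's skeleton: a hyperbolic set realising prescribed itineraries between $U$ and $V$, periodic points $Q_N$ with a long linear stay near $P$ and a fixed excursion, a transition matrix whose central block is composed with $\lambda^N R(N\theta)$, the double-loop trick to make the central determinant positive, and the count of linear iterates to get $\log J_{Q_N}=N\log J_P+O(1)$. Where you genuinely diverge is the mechanism for producing non-real leading eigenvalues. The paper keeps the return time $k$ fixed and introduces an auxiliary one-parameter (or, in the $(2,2)$ case, two-parameter) family of local rotations $\mathfrak{L}_\alpha=(L_{m-2},\lambda R_{\alpha+\theta},L_n)$, then runs a saddle-node/intermediate-value argument on the induced circle maps (Claim~\ref{c.r.complex}) to find a parameter where the two central eigenvalues collide and go complex. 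You instead perturb once to make $\theta/2\pi$ irrational and let the return time $N$ do the rotating, using Weyl equidistribution of $N\theta$ plus an explicit discriminant computation. Your route buys infinitely many such saddles for a single perturbed map and avoids the bifurcation analysis; the paper's route needs no irrationality and, crucially, decouples the two central planes in the $(2,2)$ case via the independent parameters $\alpha,\beta$ — with a single integer parameter $N$ you need joint equidistribution of $(N\theta,N\varphi)$ on the $2$-torus, which requires rational independence of $1,\theta/2\pi,\varphi/2\pi$, not merely irrationality of each angle; this is achievable by the same kind of perturbation but should be stated. Two smaller points: the perturbations that alter the multipliers of $P$ (irrationalising $\theta$, and making $\log J_P\neq0$) cannot be supported away from $U$ as you claim — they must be done inside $U$, keeping the map linear near $P$ as the paper does with $\mathfrak{L}_\alpha$; and the amplitude inequality $a^2>4\det K_{cs}$ is superfluous, since when it fails the discriminant is negative for every $N$, which only helps you.
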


Before proving the lemma, let us  give a simple two-dimensional geometrical argument
leading to matrices with a pair of conjugate nonreal eigenvalues.

Consider a $2\times 2$ real matrix $A$ with
real eigenvalues $\tau>\rho >0$ and  the
family of linear maps
\begin{equation}
\label{e.rotation} A_\varphi = A \circ R_\varphi, \quad
\mbox{where} \quad
R_\varphi = \left( \begin{matrix} \cos \varphi &  -\sin \varphi\\
\sin \varphi & \cos \varphi
\end{matrix}
\right), \quad \varphi\geqslant 0.
\end{equation}
\begin{claim}[Saddle-node bifurcations]\label{c.r.complex} There is $\varphi_0>0$ such that
\begin{itemize}
\item
$A_{\varphi}$ has different and positive real eigenvalues for every $\varphi \in [0, \varphi_0)$,
\item
 $A_{\varphi_0}$
has a real eigenvalue of multiplicity two, and
\item
$A_{\varphi}$ has a pair of nonreal eigenvalues for
$\varphi>\varphi_0$ close enough to $\varphi_0$.
\end{itemize}
\end{claim}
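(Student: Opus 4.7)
The plan is to reduce everything to tracking the trace and the discriminant of the $2\times 2$ family $A_\varphi = A\circ R_\varphi$, since the characteristic polynomial of a $2\times 2$ matrix is determined by its trace and determinant. The determinant part is immediate: because $\det R_\varphi = 1$, I would first observe that $\det A_\varphi = \det A = \tau\rho > 0$ is constant in $\varphi$.

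Next, I would compute the trace in closed form. Writing $R_\varphi = \cos\varphi\cdot I + \sin\varphi\cdot R_{\pi/2}$ and using linearity of the trace, one gets
\[
\mathrm{tr}(A_\varphi) = (\tau+\rho)\cos\varphi + c\sin\varphi = R\cos(\varphi-\alpha),
\]
where $c \eqdef \mathrm{tr}(A\circ R_{\pi/2})$, $R \eqdef \sqrt{(\tau+\rho)^2+c^2} \geqslant \tau+\rho$, and $\alpha\in(-\pi/2,\pi/2)$ is the phase determined by $\cos\alpha = (\tau+\rho)/R$, $\sin\alpha = c/R$. Consequently the discriminant of the characteristic polynomial is
\[
\Delta(\varphi) \eqdef \mathrm{tr}(A_\varphi)^2 - 4\det A_\varphi = R^2\cos^2(\varphi-\alpha)-4\tau\rho,
\]
and the three bullet points of the claim translate directly into statements about the sign of $\Delta$ and of $\mathrm{tr}(A_\varphi)$.

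I would then introduce $\beta \eqdef \arccos(2\sqrt{\tau\rho}/R)\in(0,\pi/2)$ and set $\varphi_0 \eqdef \alpha+\beta$. The pivotal inequality is $|\alpha|<\beta$, equivalent to $\cos\alpha>\cos\beta$, i.e.\ to $\tau+\rho>2\sqrt{\tau\rho}$; this is where the hypothesis $\tau\neq\rho$ is used in an essential way. From $|\alpha|<\beta$ one deduces both $\varphi_0>0$ and $\varphi-\alpha\in(-\beta,\beta)$ for every $\varphi\in[0,\varphi_0)$.

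With this setup, the three properties are a direct verification. For $\varphi\in[0,\varphi_0)$ the inequality $\cos(\varphi-\alpha)>\cos\beta>0$ gives simultaneously $\mathrm{tr}(A_\varphi)>2\sqrt{\tau\rho}>0$ and $\Delta(\varphi)>0$; combined with $\det A_\varphi=\tau\rho>0$, this forces two distinct positive real eigenvalues. At $\varphi=\varphi_0$, $\Delta(\varphi_0)=0$ and $\mathrm{tr}(A_{\varphi_0})=2\sqrt{\tau\rho}$, producing a double real eigenvalue equal to $\sqrt{\tau\rho}$. For $\varphi$ slightly larger than $\varphi_0$, $\cos^2(\varphi-\alpha)$ drops strictly below $\cos^2\beta$, so $\Delta(\varphi)<0$ and the eigenvalues become a nonreal complex conjugate pair. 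I do not foresee any real obstacle; the only mildly delicate point is ensuring that the first zero of $\Delta$ is a transversal crossing rather than a tangency, which is automatic from the explicit trigonometric form of $\Delta$.
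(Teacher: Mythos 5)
Your proof is correct, but it takes a genuinely different route from the paper. You work entirely with the characteristic polynomial: since $\det R_\varphi=1$ the determinant $\tau\rho$ is frozen, the trace is the sinusoid $R\cos(\varphi-\alpha)$, and the whole claim reduces to locating the first zero of the discriminant $R^2\cos^2(\varphi-\alpha)-4\tau\rho$; the pivotal inequality $|\alpha|<\beta$ is exactly $\tau+\rho>2\sqrt{\tau\rho}$, i.e.\ the hypothesis $\tau\neq\rho$, and the crossing at $\varphi_0=\alpha+\beta$ is transversal because $\beta\in(0,\pi/2)$. This checks out line by line and even yields the explicit double eigenvalue $\sqrt{\tau\rho}$. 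The paper instead argues geometrically on the induced circle map $\widetilde A_\varphi(u)=A_\varphi(u)/\|A_\varphi(u)\|$: the two eigendirections of $A$ are an attracting and a repelling fixed point of $\widetilde A_0$, turning on $\varphi$ pushes the graph monotonically so the two fixed points approach each other and collide in a saddle--node bifurcation at some $\varphi_0$ (double eigenvalue), after which there are no real eigendirections at all. Your computation is more elementary, fully rigorous, and quantitative, whereas the paper's soft argument leaves the monotone motion of the fixed points somewhat informal. The trade-off is that the circle-map picture is precisely what the authors transport to the nonlinear setting in the proof of Lemma~\ref{l.nonreal}, where they track how the center eigenvectors $v^k_\alpha$, $w^k_\alpha$ of the perturbed return maps ``move as in the proof of Claim~\ref{c.r.complex}''; your trace--determinant argument proves the claim but would not serve as that template, so if one adopted it one would still need the eigendirection dynamics (or an analogous trace/determinant control of $\ffL^k_\alpha\circ\ffS_{k,\alpha}$) for the later step.
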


\begin{proof}
Without loss of generality, we can assume that the eigenvectors  of $A$ are $v =(1,0)$
and $w=(\cos \theta, \sin \theta)$,
$\theta \in (0,\pi)$,
associated $\tau>\rho >0$.
Consider the circle map  $\widetilde{A}_\varphi$ associated to $A_\varphi$  given by
$$
\widetilde{A}_\varphi (u) = \frac{A_\varphi (u) }{||A_\varphi
(u)||}.
$$
Note that $\widetilde{A}_0$ has the  following hyperbolic fixed
points:  the ``most expanding"  eigenvector $v$ of $A$ is
attracting for $\widetilde{A}_0$ while the ``most contracting" eigenvalue $w$ of $A$  is
repelling for $\widetilde{A}_0$.
See Figure~\ref{fig:saddle-nodem}.
 These  hyperbolic points of $\widetilde{A}_0$ in
$\mathbb{S}^1$ have continuations $v_\varphi$ and $w_\varphi$ for
every small $\varphi>0$. Note that for $\varphi >0$ it holds $\widetilde A_\varphi (u)>u$, $u=v, w$. That is, the
graph of $\widetilde A_\varphi$ moves up as $\varphi$ increases.
 This implies that the fixed points $v_\varphi$
and $w_\varphi$ of $\widetilde A_\varphi$ ``move up and down,"
respectively, and hence $v< v_\varphi< w_\varphi< w$. Thus there is
some $\varphi_0$  (associated to a saddle-node bifurcation of the family
$(\widetilde A_\varphi )_{\varphi}$) such that these two points coincide.
The
parameter $\varphi_0$ corresponds to a real eigenvalue of
multiplicity two of $A_{\varphi_0}$. For $\varphi> \varphi_0$ close enough to
$\varphi_0$,
this leads to a pair of conjugate nonreal eigenvalues.
\end{proof}

\begin{figure}
\begin{overpic}[scale=0.1,
]{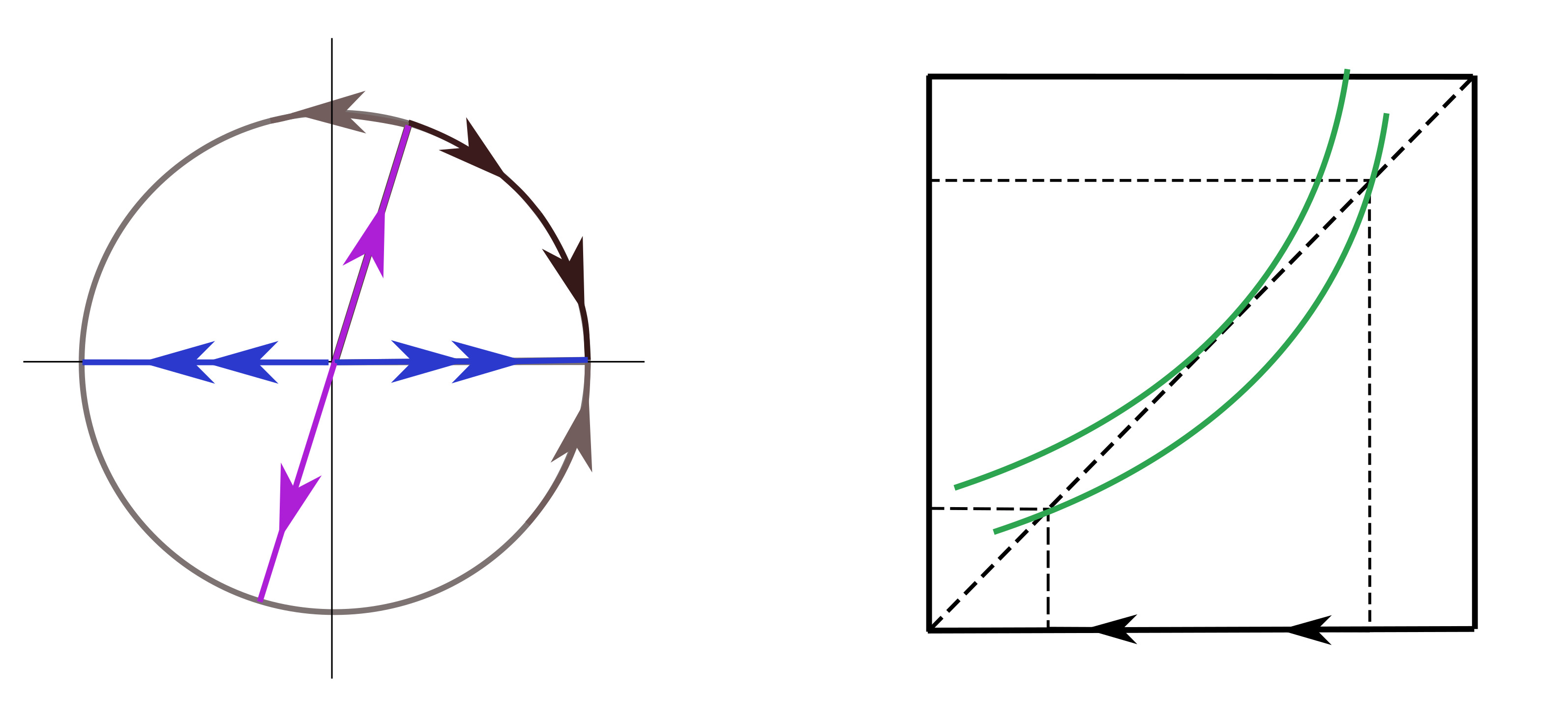}
  \put(130,82){\Large {$v$}}
       \put(117,72){\large {$\bullet$}}
      \put(-10,82){\Large {$-v$}}
            \put(14,72){\large {$\bullet$}}
   \put(84,135){\Large {$w$}}
          \put(81,120){\large {$\bullet$}}
    \put(32,8){\Large {$-w$}}
        \put(50,23){\large {$\bullet$}}
               \put(65,72){\large {$\bullet$}}
\put(223,86){$\widetilde A_{\varphi_0}$ }
\put(253,49){$\widetilde A_{0}$ }
      \put(210,8){\Large {$v$}}
            \put(275,8){\Large {$w$}}
       \put(211.5,42){{$\bullet$}}
        \put(277.5,108){{$\bullet$}}
 \end{overpic}
\caption{Saddle-node bifurcation}
\label{fig:saddle-nodem}
\end{figure}

We now recall a standard construction involving transverse homoclinic points and hyperbolic sets.

\begin{remark}
\label{r.preliminaryconst} Consider $f\in
\mathrm{Diff}^r(M^{m+n})$ with  a pair of   saddles
$P$ and $P'$ of $u$-index
$n$ as in Proposition~\ref{p.Q}.
Consider open sets $U\ni P$  and $V\ni P'$, transverse homoclinic
points $X \in W^s(P, f) \pitchfork W^u (P',f)$ and $Y \in W^s(P',
f) \pitchfork W^u (P,f)$ with (recall the definitions in \eqref{e.localmanifolds})
\[
\begin{split}
&f^{k_X} (X) \in W^s_{U, \mathrm{loc}} (P,f), \quad f^{-k_X} (X)
\in W^u_{V,\mathrm{loc}} (P',f)
\\
&f^{k_Y} (Y) \in W^s_{V, \mathrm{loc}} (P',f), \quad f^{-k_Y} (Y)
\in W^u_{U, \mathrm{loc}} (P,f)
\end{split}
\]
for some constants $k_X, k_Y>0$, see Figure~\ref{fig:rel}. Then for every pair of  small
neighbourhoods $U_X$ of $X$ and $U_Y$ of $Y$, the maximal invariant
set   $\Lambda =\Lambda_{\widehat{V}}$  of $f$ in
$$
\widehat U  = U \cup V \cup \widetilde U_X \cup \widetilde U_Y,
\quad \mbox{where} \quad \widetilde U_Z = \bigcup_{i=-k_Z}^{k_Z}
f^i (U_Z), \quad Z=X,Y,
$$
is a transitive hyperbolic set
with
 a dominated splitting of the form
$$
 E^{ss} \oplus  E^{cs}  \oplus E^{cu} \oplus E^{uu}, \quad E^s=E^{ss}
\oplus E^{cs},
\quad E^u=E^{cu}
\oplus E^{uu},
$$
where the dimensions of $E^{cs}$ and $E^{cu}$
are $2$ and $n_u$, respectively.
This decomposition follows from the  type of
$P$  and the simple spectrum  of $P'$.

We observe that there is a homoclinic version (i.e., with $P$ having transverse
homoclinic points
and considering only one transverse homoclinic point) of the construction
above.
\end{remark}

\begin{proof}[Proof of Lemma~\ref{l.nonreal}]
Let $\Lambda=\Lambda_{\widehat{U}}$ be the hyperbolic
set provided by Remark~\ref{r.preliminaryconst}
(applied to $P$, $P'$,
$U$, $V$ as in the lemma, and the corresponding homoclinic points). We take a neighbourhood $U_0$
of $P$ contained in $U$, a transverse homoclinic point $A\in \Lambda$ of $P$, and $\ell>0$ such that the orbit $O_f (A)$ of $A$
satisfies
\begin{equation}
\label{e.transitionell}
\begin{split}
&\mathcal{O}_f(A) \setminus  \{f(A), \dots, f^{\ell-1} (A)\} \subset  U_0, \\
& \{f(A), \dots, f^{\ell-1} (A)\} \cap U_0 =\emptyset, \\ \
&\mathcal{O}_f(A) \cap V \ne\emptyset.
\end{split}
\end{equation}

\begin{figure}
\label{fig:rel}
\begin{overpic}[scale=0.1,
]{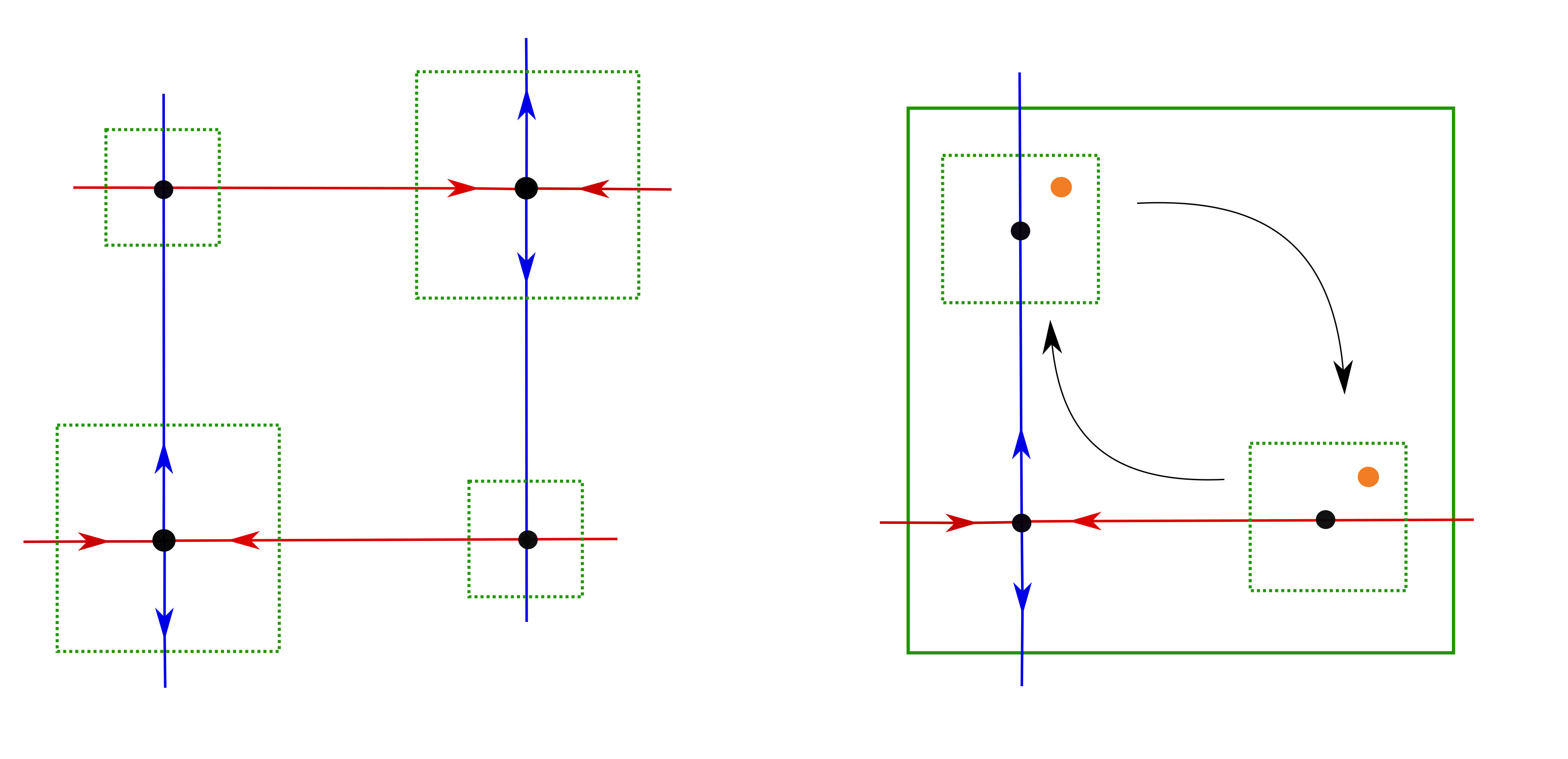}
  \put(17,85){\Large {$U$}}
       \put(47,60){\large {$P$}}
   \put(14,160){\Large {$U_Y$}}
       \put(45,144){\large {$Y$}}
 \put(100,75){\Large {$U_X$}}
       \put(131,60){\large {$X$}}
   \put(100,168){\Large {$V$}}
       \put(133,144){\large {$P'$}}
    \put(223,117){\large {$A$}}
            \put(250,140){\large {$f^k(Q_k)$}  }
          \put(215,147){\Large {$U_A$}}
       \put(297,39){\large {$f^{\ell}(A)$}  }
             \put(325,67){\large {$Q_k$}  }
     \put(310,127){\Large {$  \ffS $}  }
       \put(260,80){\Large {$ \ffL^k $}  }
           \put(217,32){\Large {$P$} }
 \end{overpic}
\caption{Generation of saddles with prescribed itineraries}
\end{figure}

Take a small neighbourhood $U_A\subset U_0$ of $A$  such that
$f^{\ell}(U_A)\subset U_0$, $f^i (U_A) \subset \widehat U$ for
every $i\in \{0, \dots, \ell\}$,
 and the
sets $f(U_A), \dots , f^{\ell-1} (U_A)$ are pairwise  disjoint and
disjoint from $U_0$. Let $\ffF$ be the restriction of $f^{\ell}$
to $U_A$, that is, the  transition map from $U_A$ to $U_0$ along
the orbit of $A$, and $\ffL$ the restriction of $f$ to $U_0$ (a
linear map). See Figure~\ref{fig:rel}. Observe that there is $k_0$ such that for every
$k\geqslant k_0$ there is a periodic point $Q_k$ of period $k+\ell$ and
$u$-index $n$ such that
\begin{itemize}
\item $Q_k\in f^{\ell} (U_A)$,
\item
$f^i(Q_k)\in U_0$ for every $i=0, \dots, k$ and $f^k(Q_k)\in U_A$,
\item
the eigenvalues  of $Df^{k+\ell}(Q_k)$ satisfies (following the notation in \eqref{e.igualalambda})
$$
|\lambda^{(k)}_{3}| < |\lambda^{(k)}_{2}| \leqslant
|\lambda^{(k)}_1|
<1 < |\gamma^{(k)}_1| \leqslant  |\gamma^{(k)}_2|
< |\gamma^{(k)}_3|,
$$
\item
$Q_k$ is homoclinically related to $P$.
\end{itemize}
This implies that
$$
Q_k = f^{k+\ell} (Q_k) =   \ffS \circ \ffL^k (Q_k) \in \Lambda
$$
and that the orbit of $Q_k$ intersects both $U_0\subset U$ and $V$.

We first consider the case when $P$ is  of type $(2,1)$.
If
there are some large $k$ such that
 $\lambda^{(k)}_{2}$ and $\lambda^{(k)}_{1}$ are nonreal or equal, we are done.
 Thus, in what follows, we assume that   $ \lambda^{(k)}_{2}$ and $\lambda^{(k)}_1$
   are both real and consider  the two-dimensional central stable bundle
 $E^{cs}(Q_k)$ associated to them provided by Remark~\ref{r.preliminaryconst}.

We write the linearising coordinates of $P$  and the matrix
$Df(P)$ in the form
$$
(x_{m-2}, x_c, x_n) \in \mathbb{R}^{m-2} \times \mathbb{R}^2
\times \mathbb{R}^n, \qquad Df(Q)= \ffL=(L_{m-2} ,\lambda \,
R_\theta,  L_n),
$$
where $\lambda\in (0,1)$, $R_\theta$ is as in
\eqref{e.rotation}, and $L_{m-2}$ and $L_n$ are linear maps. We
consider linear maps of the form
\begin{equation}
\label{e.linearpart}
\ffL_{\alpha} =   (L_{m-2}, R_\alpha \circ (\lambda \, R_\theta) ,
L_n)= (L_{m-2}, \lambda R_{\alpha+\theta} , L_n)
\end{equation}
and the planes
$$
\pi \eqdef \{ (0^{m-2}, v_c, 0^n), v_c\in \mathbb{R}^2\}, \quad
\widetilde{\pi} \eqdef D\ffS_{\ffS(A)}^{-1} (\pi).
$$

We first claim that we can assume that the determinant of
$D\ffS{_A}|_{\widetilde{\pi}}$ is positive and
$D\ffS{_A}|_{\widetilde{\pi}}$  preserves the orientation.
Otherwise,
we consider new homoclinic points corresponding to two ``loops" of
$A$, that is, a new transverse homoclinic point  $A'$ of $P$ nearby
$A$ whose orbit is as follows: there is large even $i_0$ such that
\[
\begin{split}
&A'\in W^u_{U_0, \mathrm{loc}} (P), \quad f^{\ell+i_0+\ell}(A') \in W^s_{U_0, \mathrm{loc}} (P),\\
&f^{\ell}(A'), f^{\ell+i_0} (A') \in U_A, \\
& f^{\ell+i}(A') \in U_0 \quad \mbox{for every $i\in \{0, \dots
i_0\} $}.
\end{split}
\]
As for large $i_0$ the transition map $\ffS'$ associated to
$f^{2\ell+i_0}$ at $A'$ is close to $\ffS \circ \ffL^{i_0} \circ
\ffS$ our claim follows. Note that for this new point $A'$ one must
shrink the linearising neighbourhood in the next steps, but this
just adds additional iterations by a linear map that does not change
the central determinant.

To conclude the proof, consider local perturbations $f_\alpha$ of
$f$ whose derivative in $U_0$ is $\ffL_\alpha$. For each large $k$
and $\alpha \in [0,2\pi]$ the diffeomorphism $f_\alpha$ has a
periodic point $Q_{k,\alpha}$ that is the continuation of $Q_k$.
Note that if there is small $\alpha$ and large $k$ such that
$$
D (\ffL^k_\alpha \circ \ffS)_{\ffS^{-1} (Q_{k,\alpha })}= \ffL^k_\alpha \circ
\ffS_{{k,\alpha}}, \quad \ffS_{{k,\alpha}} \eqdef D
\ffS_{\ffS^{-1} (Q_{k,\alpha})}
$$
has a nonreal central eigenvalue, we are done.
Note that, independently of $\alpha$,
\begin{equation}
\label{e.nosemueve} \widehat Q_{k,\alpha} \eqdef \ffS^{-1}
(Q_{k,\alpha}) \to A \quad \mbox{and} \quad \ffS_{{k,\alpha}} \to
D \ffS_{A}, \quad \mbox{as $k\to \infty$.}
\end{equation}
We also have
$$
\ffL^k_{\alpha} =
 (L_{m-2}^k,  R_{k\alpha} \lambda^k R_{k\theta} , L_n^k)
= (\mathrm{Id}_{m-2},  R_{k\alpha}, \mathrm{Id}_n) \circ \ffL^k,
$$
where $\mathrm{Id}_r$ denotes the identity map in $\mathbb{R}^r$.
Fix large $k$ and the center eigenvectors of $v^k_\alpha$ and $w^k_\alpha$ of
$Df^{k+\ell}(\widehat{Q}_{k,0})= \ffL^k  \circ \ffS_{k,0}$.
For the parameter $\alpha=0$, we
can chose these vectors such that
$$
u^k_0 = (\tau^s_{k} (u^k_{c}),  u^k_{c}, \tau^u_k (u^k_{c}))\in
\mathbb{R}^{m-2} \times \mathbb{R}^2 \times \mathbb{R}^n, \quad
u=v, w,
$$
where $\tau^s_{k}\colon \mathbb{R}^2 \to \mathbb{R}^{m-2}$
and  $\tau^u_{k}\colon \mathbb{R}^2 \to \mathbb{R}^{n}$
go to the zero map as $k\to \infty$
and $v^k_c$ and $w_c^k$ are unitary  eigenvectors in $\mathbb{R}^2$
as in the proof of Claim~\ref{c.r.complex}.
For each $\alpha\in [0,2
\pi]$ we consider
$$
\widehat u^k_\alpha = D\ffS^{-1}_{Q_{k, \alpha}} (u^k)= (\widehat
\tau^s_{k} (\widehat u^k_{c,\alpha}),  \widehat u^k_{c,\alpha}, \widehat
\tau^u_k (\widehat u_{c, \alpha})), \quad
 \quad u=v,w,
$$
where the maps $\widehat{\tau}^{s,u}_k$ are defined similarly as above.
Observe that, by \eqref{e.nosemueve}, for each fixed $\alpha$ the variation of these
vectors goes to zero as $k\to \infty$.

We now observe that, for
small $\alpha$ and big $k$, the images of the vectors $v^k_0$ and
$w^k_0$ by $\ffL_\alpha^k$ are
the  rotation by the angle $k\alpha$ (that is big if $k$ is big) in the central direction
of their images by $\ffL^k$
(which are parallel to $\widehat v^k_0$ and $\widehat w^k_0$, respectively).
Thus these vectors move as in the proof of Claim~\ref{c.r.complex}. This
displacement is not altered by $D\ffS$ since this map preserves
the orientation in the central plane.
Now the conclusion follows
 as in Claim~\ref{c.r.complex}, implying that
$Q_{k,\alpha}$ is of type $(2,\ast)$, $\ast \in \{1,2\}$.
The fact that if $P$
is of type $(2,1)$ then $Q_{k,\alpha}$ is of type $(2,1)$
 follows from domination arguments.

 In the case when $P$ is of type $(2,2)$ the argument is analogous. The only difference is that we need to add a rotation-like term in the center unstable direction. Equation
 \eqref{e.linearpart} now reads as follows
 \[
 \begin{split}
\ffL_{\alpha, \beta} &=   (L_{m-2}, R_\alpha \circ (\lambda \, R_\theta) ,
R_\beta \circ (\gamma \, R_\varphi),
L_{n-2})\\
&= (L_{m-2}, \lambda R_{\alpha+\theta} ,
 \gamma \, R_{\beta+\varphi}
L_{n-2}),
\end{split}
\]
where $\gamma>1$.
This leads to a family depending on two independent parameters $\alpha$ and $\beta$. This allows us to deal with each two-dimensional central
direction independently, simultaneously getting  nonreal eigenvalues.

Finally, to get the property of the leading Jacobians, note that the number of iterates of the points $Q_{k,\alpha}$ in
the neighbourhood $U_0$ can be chosen arbitrarily large
compared to the length of the orbit. The assertion now
follows using the domination splitting. This ends the proof of the lemma.
\end{proof}

\begin{remark}[The case $n=1$] \label{r.longmanifolds}
In our construction, the orbit of the resulting
saddle $Q$ has most of  its iterates in the linearising neighbourhood
$U$ of $P$. Therefore, its local manifolds are large.
This implies that  $Q$ is homoclinically related to $P$ (and hence to $P'$).
Consider now the case  $n=1$. As the saddle $P$
has transverse homoclinic points, the property of $Q$ having large
unstable manifolds implies that both components of
$W^u_{U,\mathrm{loc}} (Q)\setminus \{Q\}$ transversely intersects
the
 stable manifold of
$P$ (and hence the stable manifold of $Q$, since  $P$ and
$Q$  are homoclinically related). This implies that $Q$ is
biaccumulated by its transverse homoclinic points.
\end{remark}

\begin{remark}\label{r.preservation}
Assume that the saddle point $P$ in Lemma~\ref{l.nonreal} has a
homoclinic tangency. Then the ``local" perturbation $g$ of $f$ can
be taken preserving that tangency.
\end{remark}

\subsection{Proof of Proposition~\ref{p.Q}}
We need to consider the following types $(2,1)$, $(2,2)$, and $(1,2)$ for $P$. We prove the proposition when $m_s=2$
(in the case $(1,2)$ just consider $f^{-1}$).
The
existence of the saddle $Q$ of type $(2,n_u)$ follows from
Lemma~\ref{l.nonreal}.
 The homoclinic relations and the biaccumulation property (when $n=1$)
 follow from Remark~\ref{r.longmanifolds}. Remark~\ref{r.preservation} implies the preservation of
 the tangency of $P$. This ends the proof of the proposition.

\section{Return maps at the homoclinic tangency}
\label{s.dynamicsatthetangency}

Throughout this section,  we consider $f\in \mathrm{Diff}^r(M^{m+n})$, $r,m \geqslant 2$ and
$n\geqslant 1$, having a homoclinic
tangency $Y$ associated to a  simple saddle $P$ of type
$(m_s,n_u)$  and $u$-index $n$.   We first state in Section~\ref{ss.genericc} generic conditions
{\bf{(C0)}}--{\bf{(C5)}}
at the homoclinic tangency. Thereafter,
we fix a neighbourhood of the tangency and introduce first return maps to it (Section~\ref{ss.returnmap22}). We also see how the expanding
leading
 Jacobian of the
saddle $P$ (see Equation~\eqref{e.J})
implies expansion properties of the return maps (Section~\ref{s.expandingprop}).
\subsection{Generic conditions}
\label{ss.genericc}
For simplicity, in what follows, we assume that the
period of $P$ is one.
We also assume that
\begin{enumerate}
\item[{\bf{(C0)}}] \label{lineal-coordenadas}  $f$ is $C^r$ linearisable at
$P$.
\end{enumerate}
By Sternberg linearisation theorem ~\cite{Ste:57},
every $C^r$ neighbourhood of $f$ contains diffeomorphisms satisfying this condition.

 We consider a small neighbourhood $W$ of $P$
 with these linearising coordinates
   \begin{equation}
 \label{e.linearisingcoor}
 (u,x,y,v)\in
\mathbb{R}^{m-m_s}\times\mathbb{R}^{m_s}\times
\mathbb{R}^{n_u}\times \mathbb{R}^{n-n_u}.
\end{equation}
In these coordinates, $P=(0^{m-m_s},0^{m_s},0^{n_u},0^{n-n_u})$
and the map $\ffT_0\eqdef f|_{W}$ is of the form
\begin{equation}\label{e.linearizacion}
(u,x,y,v)\,\, \mapsto\,\, \ffT_0(u,x,y,v)\eqdef(Au, Bx,Cy, Dv),
\end{equation}
here $A,B,D,C$ are square  matrices
whose eigenvalues  are $\lambda_m$,
$\lambda_{m-1},\dots,\lambda_{m_s+1}$ (the ones of $A$),
$\lambda_{m_s}, \dots , \lambda_{1}$ (the ones of $B$),
$\gamma_{1},\dots, \gamma_{n_u}$ (the ones of $C$), and
$\gamma_{n_u+1}, \dots, \gamma_{n}$ (the ones of $D$).
In the neighbourhood $W$ the local invariant sets are
(we omit the dependence on $W$)
\begin{equation}\label{e.localm}
\begin{split}
W^u_{\mathrm{loc}}(P,f)&=\{(u,x,y,v)\colon u=0^{m-m_s},\, x=0^{m_s}\}\subset W,\\
W^s_{\mathrm{loc}}(P,f)&=\{(u,x,y,v)\colon y=0^{n_u},\, v=0^{n-n_u}\}\subset W,\\
W^{ss}_{\mathrm{loc}}(P,f)&=\{(u,x,y,v)\colon x=0^{m_s},\,
y=0^{n_u},\, v=0^{n-n_u} \}\subset W.
\end{split}
\end{equation}

  We select
 points $Y^\pm$, with  $f^\ell(Y^-)=Y^+$ for some $\ell>0$,
 in the orbit of the tangency point $Y$ such that
 \begin{equation}\label{e.Transversality}
\begin{split}
&Y^-=(0^{m-m_s},0^{m_s},y^{-},v^{-} )\in W^u_{\mathrm{loc}}(P,f),\\
&Y^+=(u^+,x^+,0^{n_u},0^{n-n_u})\in W^s_{\mathrm{loc}}(P,f),\\
&\{ f(Y^-), \dots, f^{\ell-1} (Y^-)\}\cap W=\emptyset.
\end{split}
\end{equation}
This can be obtained by shrinking $W$.

To state the remaining conditions {\bf{(C1)}}--{\bf{(C5)}},
recall that if $m>m_s$ there exists the \textit{strong stable
foliation} $\mathcal{F}^{ss}$   of dimension $m-m_s$ defined on $ W^s(P,f)$.
This is the unique $f$ invariant foliation of dimension $m-m_s$
having the strong stable manifold $W^{ss}(P,f)$
of $P$ (tangent to
$E^{ss}(P)$) as a leaf. Similarly,  if $n>n_u$ there is the \textit{strong unstable
foliation} $\mathcal{F}^{uu}$
of dimension $n-n_u$
 defined on $ W^u(P,f)$.
In the coordinates above,
the (local) invariant foliations $\mathcal{F}^{ss}$ on $W^s_{\mathrm{loc}}(P,f)$
and $\mathcal{F}^{uu}$ on $W^u_{\mathrm{loc}}(P,f)$ are also straightened and have, respectively,  the form
\begin{equation*}
\begin{split}
&\{(u,x,y,v)\colon x=\mathrm{const.},\,y=0^{n_u},\, v=0^{n-n_u}\},\\
&\{(u,x,y,v)\colon u=0^{m-m_s},\, x=0^{m_s},\, y=\mathrm{const.} \}.
\end{split}
\end{equation*}

Let $E^{cu}(P)$ be the $Df$-invariant space of dimension $m_s+n$
corresponding to the eigenspaces associated to the eigenvalues of
modulus greater or equal than the stable leading
multiplier of $P$.
Following \cite{Tur:96} and based on \cite{HirPugShu:77},
an {\em{extended center unstable manifold $W^{cu}(P,f)$ of $P$}}
is an invariant manifold of dimension $m_s+n$ tangent to
$E^{cu}(P)$ at $P$  containing $W^u(P,f)$. This manifold always
exists, but in general, it is not uniquely defined.
Similarly, we can define
the {\em{extended center stable manifold $W^{cs}(P,f)$ of $P$}}
 of dimension $m+n_u$ tangent to
$E^{cu}(P)$ at $P$ and contains $W^u(P,f)$.

Conditions {\bf{(C1)}}--{\bf{(C5)}} are similar to the ones in
\cite{GonTurShi:93,GonTurShi:93b}, translating the quasi-transverse conditions in
\cite{NewPalTak:83} to our higher-dimensional context.
For that, consider the subspace
\begin{equation}\label{e.EY}
E_Y \eqdef T_Y W^s(P,f)\cap T_Y W^u(P,f).
\end{equation}
The required conditions are:
\begin{itemize}
\item[{\bf{(C1)}}]
$ \dim  (E_Y) = (m+n) - \dim (T_Y W^s(P,f)+ T_Y W^u(P,f))=1,$
\item[{\bf{(C2)}}]
 $E_Y$ is not contained in $T_Y \mathcal{F}^{ss}
(Y)$, where $\mathcal{F}^{ss} (W)$ denote the leaf of
$\mathcal{F}^{ss}$ through $W$,
\item[{\bf{(C3)}}]
the contact between the stable and unstable manifolds at the
tangency point $Y$ is quadratic,
\item[{\bf{(C4)}}]
$W^{cu}(P)$  is transverse to $\mathcal{F}^{ss} (Y^+)$ at $Y^+$, and
\item[{\bf{(C5)}}]
{$W^{cs}(P)$  is transverse to $\mathcal{F}^{uu} (Y^-)$ at $Y^-$.}
\end{itemize}
Conditions {\bf{(C4)}} and {\bf{(C5)}} are intrinsic as they do not depend on the
choice of the extended center manifolds.
Moreover, when $n=1$ (and hence $n_u=1$) condition {\bf{(C5)}} is vacuous.
Finally, our conditions are $C^r$ generic.

\begin{remark}
\label{r.setLambda} Consider the $f$-invariant compact set
$\Lambda$ formed by the orbits of the saddle $P$ and the tangency
$Y$. Under conditions  {\bf{(C1)}} -- {\bf{(C4)}},   the set
$\Lambda$ has a dominated splitting
 of the form
\begin{equation}\label{e.split}
T_\Lambda M= E^{ss}\oplus E^{cu}\quad \mbox{with}\quad \dim
(E^{cu})=m_s+n,
\end{equation}
where $E^{ss}$ is uniformly contracting. We consider cone
fields
\begin{equation}
\label{e.cc} X\in U\to\mathcal{C}^{ss}(X)\qquad \mbox{and}\qquad
X\in U\to \mathcal{C}^{cu}(X)
\end{equation}
defined on a small neighbourhood $U$ of $\Lambda$ such that
$E^{ss}\subset \mathcal{C}^{ss}$ and $E^{cu}\subset
\mathcal{C}^{cu}$ as
 in Proposition~\ref{prop:cone}.
 The set  $U$ is called a {\em{partially hyperbolic neighbourhood}} of $\Lambda$.

 A similar comment holds when  {\bf{(C4)}} is replaced by {\bf{(C5)}}.
\end{remark}

\begin{remark}
\label{r.cgd} The strong stable foliation $\mathcal{F}^{ss}$ of
$W^s(P,f)$ can be smoothly extended to a $C^r$ foliation
$\widetilde{\mathcal{F}}^{ss}$ defined on a neighbourhood of
$\Lambda$ (see \cite{Tur:96}). In particular,  after shrinking the
partially hyperbolic neighbourhood $U$, we can assume that the
leaves
 $\widetilde{\mathcal{F}}^{ss}(X)$, $X\in U$,
 are
 tangent to the cone-field $\mathcal{C}^{ss}(X)$ and
contain disks of uniform size. Finally, the restriction of this
foliation to the set $\Lambda$ is invariant.
\end{remark}

 Throughout this section, we assume that the saddle $P$
 is of type $(m_s,n_u)$
  and that $P$ and  its homoclinic tangency $Y$ satisfy conditions  {\bf{(C0)}}--{(\bf{C5)}}.

 \subsection{Return maps} \label{ss.returnmap22}
We take small closed neighbourhoods $\Pi^{\pm}\subset W$ of
$Y^\pm$
(called {\em{reference boxes}})
such that $f (\Pi^+)\cap
\Pi^+=\emptyset=
 f^{-1}(\Pi^-)\cap \Pi^-$.
Following
 \cite[Corollary 1]{GonShiTur:08} (see also~\cite{GonTurShi:93,GonTurShi:93b}), under
 conditions~{\bf{(C1)}}--{\bf{(C5)}},
 the Taylor expansion
  of the {\em{transition map}} $\ffT_1$ from $Y^-$ to $Y^+$ can be written as follows:
\begin{equation*}
\ffT_1\eqdef f^{\ell}|_{\Pi^-} \colon \Pi^- \to
\Pi^+,
\qquad
\ffT_1(u,x,y,v)= (\bar{u},\bar{x},\bar{y},\bar{v})
\end{equation*}
where
  \begin{equation}
 \label{e.defT1}
\begin{split}
 \bar u - u^+&=A_{1}\,  u +B_{1 }\, x+C_{1 }\, (y-y^-)+
 D_{1 }\, \bar{v} + \dots
 ,\\
 \bar x -x^+ &= A_{2 }\,u +B_{2 }\,x+C_{2 }\, (y-y^{-})
+  D_{2 }\, \bar{v}+ \dots
   , \\
 \bar y &=A_{3 }\, u + B_{3 }\, x+C_{3 }\,(y-y^{-})^2+
  D_{3 }\,\bar{v}+ \dots  ,
   \\
 v -v^-&=  A_{4 }\, u + B_{4 }\, x+C_{4 }\,(y-y^{-})+
  D_{4 }\, \bar{v} + \dots    ,
\end{split}
\end{equation}
where
 the notation $(\dots)$ stands for higher order  terms and
$A_{i}, B_i, C_{i},D_i$ are matrices of the appropriate dimensions. Here $B_3$, $C_2$
 and $C_3$ are nonzero matrices.

To study the returns associated to $\ffT_1$,  consider for sufficiently large $k$, say $k> k_0$, the
{\em{strips}}
 \begin{equation}
 \label{e.defPik}
 \begin{split}
  \Pi_k^+&\eqdef
   \ffT_0^{-k}(\Pi^-) \cap \Pi^+ \\
  &=\{x\in \Pi^+ \colon \ffT_0^{i} (x) \in W \mbox{ for all $i=0, \dots, k$ and $\ffT_0^k(x) \in \Pi^-$} \},
  \\
  \Pi_k^- &\eqdef  \ffT_0^k (\Pi_k^+) \subset \Pi^-.
 \end{split}
 \end{equation}
The \textit{return map} at the tangency point $Y^+$ is defined in
a subset ${\Sigma}^+ \subset \Pi^+$ by
\begin{equation}\label{e.return0}
\ffR \colon {\Sigma}^+  \eqdef \bigcup_{k>k_0}\Pi^+_k \to
\Pi^+,\quad  \ffR|_{\Pi^+_k}=\ffR_k \eqdef  \ffT_1 \circ \ffT_0^k.
\end{equation}

 We start analysing suitable ``first return maps''
defined on a neighbourhood of the homoclinic tangency $Y^+$.

\subsection{Expanding returns  along center unstable cone fields}
\label{s.expandingprop}

We now obtain``ex\-pan\-sion of volume"  properties (along the leading directions)
derived from the
expanding leading Jacobian of the saddle $P$, see Section~\ref{sss.expansionvalume}.
Additionally, in the codimension one case, we get
expansion of suitable diameters, see Section~\ref{sss.expdiameters}.

\subsubsection{Expansion of volume}
\label{sss.expansionvalume}
Next lemma about expansion of  volume of local submanifolds tangent
to the cone field
$\mathcal{C}^{cu}$
in Remark~\ref{r.setLambda}
by the return maps $\ffR_k$
is a variation of~\cite[Lemma~5]{LiTur:20}. To
state it recall the definition of $J_P=J_P(f)$
in~\eqref{e.J} and that a
submanifold $\cS$ is tangent to $\mathcal{C}^{cu}$
if $T_X \cS \subset  \mathcal{C}^{cu}(X)$ for every $X\in \cS$.

\begin{lem} \label{lema1}
There is a constant $L>0$ such that for every sufficiently large
$k$ and every submanifold $\cS \subset {\Pi}_k^+$ of dimension
$m_s+n$ tangent to $\mathcal{C}^{cu}$ it holds
$$
   \vol(\,{\ffR}_k(\cS)\,) >  L \, J_P^k \,
   \vol(\cS),
$$
where $\vol(\cdot)$ stands for the $(m_s+n)$-dimensional volume.
\end{lem}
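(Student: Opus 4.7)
The plan is to decompose $\ffR_k=\ffT_1\circ\ffT_0^k$ and estimate the volume expansion of each factor separately along $(m_s+n)$-dimensional submanifolds tangent to $\mathcal{C}^{cu}$. The linear iteration $\ffT_0^k$ will account for all of the $J_P^k$ growth (in fact with room to spare), while the fixed diffeomorphism $\ffT_1$ will only contribute a uniform multiplicative constant. The geometric ingredient powering the whole argument is that $\mathcal{C}^{cu}$ is forward $Df$-invariant on the partially hyperbolic neighbourhood $U$ from Remark~\ref{r.setLambda}, so the tangent spaces to the iterates $\ffT_0^j(\cS)$, $0\leqslant j\leqslant k$, all remain tangent to $\mathcal{C}^{cu}$ and, by the domination \eqref{e.split}, become exponentially aligned with the subspace $E^{cu}=\{u=0\}$.

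\textbf{Volume growth under $\ffT_0^k$.} In the linearising coordinates \eqref{e.linearisingcoor}, $\ffT_0$ is the block-diagonal linear map $\mathrm{diag}(A,B,C,D)$, and $\mathcal{C}^{cu}$ can be taken as a narrow cone around the subspace $\{u=0\}$. For any $(m_s+n)$-dimensional subspace $E$ sufficiently close to $E^{cu}$, a straightforward multilinear algebra computation shows that the Jacobian of $\ffT_0|_{E}$ is within a factor $(1-\varepsilon)$ of
\[
|\det B|\cdot|\det C|\cdot|\det D|\,=\,\lambda^{m_s}\,\gamma^{n_u}\,|\det D|,
\]
where I used that by the definition of leading multipliers every eigenvalue of $B$ has modulus exactly $\lambda$ and every eigenvalue of $C$ has modulus exactly $\gamma$, so $|\det B|=\lambda^{m_s}$ and $|\det C|=\gamma^{n_u}$. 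Since every eigenvalue of $D$ has modulus at least $\gamma>1$, we also get $|\det D|\geqslant\gamma^{n-n_u}\geqslant 1$. Iterating $k$ times and using the forward invariance of the cone, the pointwise Jacobian of $\ffT_0^k$ along $T\cS$ satisfies
\[
\mathrm{Jac}(\ffT_0^k|_{T\cS})\,\geqslant\,c_0\,\lambda^{m_s k}\,\gamma^{n_u k}\,\gamma^{(n-n_u)k}\,=\,c_0\,J_P^k\,\gamma^{(n-n_u)k}
\]
uniformly on $\cS$, for some $c_0>0$ independent of $k$. Integrating this estimate over $\cS$ yields $\vol(\ffT_0^k(\cS))\geqslant c_0\, J_P^k\,\vol(\cS)$ (with the extra $\gamma^{(n-n_u)k}$ factor available if needed).

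\textbf{Transition and conclusion.} The transition map $\ffT_1=f^{\ell}$ is a fixed diffeomorphism defined on the compact box $\Pi^-$, hence its full Jacobian is uniformly bounded below by a positive constant. Although the Taylor expansion \eqref{e.defT1} shows that $\partial\bar y/\partial y$ vanishes at $y=y^-$ (reflecting the quadratic tangency), the term $\partial\bar y/\partial x=B_3$ is nonzero by condition \textbf{(C3)}, and the transversality conditions \textbf{(C4)} and \textbf{(C5)} ensure that $D\ffT_1$ does not collapse any $(m_s+n)$-dimensional subspace close to $E^{cu}$; consequently the restricted Jacobian of $\ffT_1$ on such subspaces is bounded below by a positive constant $L_1$ depending only on $\ffT_1$ and the chosen cone width. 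Combining the two estimates gives the lemma with $L=c_0 L_1>0$.

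\textbf{Main obstacle.} The principal technical point is the cone-Jacobian comparison: showing that $\mathcal{C}^{cu}$ can be chosen narrow enough, and that after finitely many iterates the tangent space to $\ffT_0^{j}(\cS)$ is aligned with $E^{cu}$ closely enough, to make $\mathrm{Jac}(\ffT_0|_{E})$ a definite fraction of $|\det B||\det C||\det D|$. This is a quantitative version of the standard fact that in partially hyperbolic settings the Jacobian on centre-unstable-like planes is comparable to the full centre-unstable determinant; it is handled by combining the block structure of $\ffT_0$ with the exponential domination supplied by \eqref{e.split} to propagate the estimate through all $k$ iterates without accumulating errors.
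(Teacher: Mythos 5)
Your proposal is correct and follows essentially the same route as the paper: decompose $\ffR_k=\ffT_1\circ\ffT_0^k$, extract $J_P^k=|\det B|^k|\det C|^k$ (with the harmless extra factor $|\det D|^k\geqslant 1$) from the linear part along cone-tangent submanifolds, and bound the transition's contribution below by a $k$-independent constant via the transversality condition \textbf{(C4)}. The only difference is bookkeeping — the paper compares volumes through the projection $\pi$ onto $\{u=0^{m-m_s}\}$ using $\vol(\cS)\asymp\vol(\pi(\cS))$, whereas you control the restricted Jacobians directly via telescoping cone-alignment estimates — and both implementations are sound.
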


\begin{proof}
We use the linearising coordinates $(u,x,y,v)$ of points on the set $W$ in
\eqref{e.linearisingcoor}. For points $X$ close to $Y$
  the cone $\mathcal{C}^{cu}(X)$ is an open Grassmannian neighbourhood of the subspace
$\{0^{m-m_s}\}\times
\mathbb{R}^{m_s}\times \mathbb{R}^{n}$.
We can assume
that $\mathcal{C}^{cu}(X)$ does not depend on the point $X$, and hence
it is constant in a neighbourhood of $Y$. Thus, every $(m_s+n)$-dimensional $C^1$ manifold
 $\mathcal{S}\subset {\Pi}_k^+$
tangent to $\mathcal{C}^{cu}$  can be seen as the graph of a $C^1$
map in the variables $(x,y,v)$. Moreover,
 we have that\footnote{The
notation $f \asymp g$  means that there are constants  $C,D>0$
such that $C\,|g|< |f|< D\, |g|$.}
$$
\vol(\mathcal{S}) \asymp \vol(\,\pi(\mathcal{S})\,)
$$
where $\pi$ is the canonical projection from $W$ on $\{u=0^{m-m_s}\}$. For simplicity, we identify
$\{u=0^{m-m_s}\}$ with $\mathbb{R}^{m_s}\times \mathbb{R}^{n_u}\times
\mathbb{R}^{n-n_u}$ and write $\pi(u,x,y,v)=(x,y,v)$.

Since, by construction, $\mathcal{C}^{cu}$ is $D\ffR_k$-invariant,
we have that $\ffR_k(\mathcal{S})$ is also tangent to
$\mathcal{C}^{cu}$. Thus, we also have that
$$
\vol(\, \ffR_k(\mathcal{S})\,) \asymp
\vol(\,\pi(\ffR_k(\mathcal{S}))\,).
$$
Thus, to prove the lemma it is enough  to get a
constant $\widetilde K>0$ (independent of large  $k$) such that
\begin{equation}\label{e.fomi}
\vol(\,\pi(\ffR_k(\mathcal{S}))\,) \geqslant \widetilde  K \,
J_P^k \, \vol(\,\pi(\mathcal{S})\,).
\end{equation}

As $\pi |_{\mathcal{S}}\colon \mathcal{S} \to \pi(\mathcal{S})$
is a $C^1$
diffeomorphism, the map
$$
\widehat{\ffR}_k\colon \pi(\mathcal{S}) \to
\pi(\ffR_k(\mathcal{S})), \quad \widehat{\ffR}_k\eqdef \pi \circ
\ffR_k \circ (\pi|_{\pi(\mathcal{S})})^{-1}
$$
is a $C^1$ diffeomorphism
satisfying
\begin{equation} \label{comuta}
    \pi \circ \ffR_k = \widehat{\ffR}_k \circ \pi \quad \mbox{on}\quad  \mathcal{S}.
\end{equation}
Thus, we have that
$$
  \vol(\,\pi(\ffR_k(\mathcal{S})\,)=\vol(\,\widehat{\ffR}_k(\pi(\mathcal{S}))\,)=\int_{\pi(\mathcal{S})}
  \left|\det\frac{\partial
\widehat{\ffR}_k}{\partial(x,y, v) }\right| \, dx\, dy\, dv.
$$
 To obtain the  inequality \eqref{e.fomi}
 it is enough to get a constant $\widetilde K>0$ (independent of $k$)
such that
\begin{equation}
\label{e.conesobasta} \left|\det \frac{\partial
\widehat{\ffR}_k}{\partial(x,y,v)
}\bigg|_{\pi(\mathcal{S})}\right|\geqslant \widetilde K \, J_P^k .
\end{equation}

To get the inequality in \eqref{e.conesobasta},  recall that
$\ffR_k=\ffT_1\circ \ffT_0^k$, where
\begin{equation*}
\begin{split}
(u,x,y,v)\in \Pi_k^+\,\, &\mapsto\,\,
\ffT_0^k(u,x,y,v)=(u_k,x_k,y_k,v_k)\in \Pi^-,
\\
(u_k,x_k,y_k,v_k)\in \Pi^- \,\, &\mapsto\,\,
\ffT_1(u_k,x_k,y_k,v_k)= (\bar{u}_k,\bar{x}_k,\bar{y}_k,\bar{v}_k)\in
\Pi^+.
\end{split}
\end{equation*}
are given by the Equations~\eqref{e.linearizacion}
and~\eqref{e.defT1}. In view of Equation~\eqref{comuta}, we have
that $\widehat \ffR_k = (\pi \ffT_1 \pi^{-1})
\circ (\pi \ffT_0^k \pi^{-1})$
(here the maps $\pi^{-1}$ are local inverses in the corresponding submanifolds)
 and thus
\begin{equation}
\label{e.determinante}
  \frac{\partial \widehat \ffR_k}{\partial (x,y,v)}\bigg|_{\pi(\mathcal{S})}=
  \frac{\partial(\bar{x}_k,\bar{y}_k,\bar{v}_k)}%
  {\partial (x_k,y_k,v_k)}\bigg|_{\pi\ffT_0^k(\mathcal{S})} \, \,%
  \frac{\partial(x_k,y_k,v_k)}{\partial(x,y,v)
  }\bigg|_{\pi(\mathcal{S})}.
\end{equation}
Since, see~\eqref{e.linearizacion},
$$
x_k = B^k x, \quad
y_k = C^{k}y, \quad \text{and} \quad v_k=D^k v,
$$
where $B$, $C$, and $D$ are square matrices,
whose eigenvalues in absolute value are all equal to $\lambda$,
$\gamma$, and strictly greater than $\gamma$, respectively.
Thus, as
$$
|\det D | > \gamma^{n-n_u}>1
$$
it follows
\begin{equation}\label{e.exponential}
 \left|\det
\frac{\partial(x_k,y_k,v_k)}{\partial(x,y,
v)}\bigg|_{\{u=0^{m-m_s}\}}\right| =  J_P^k\, |\det D^k|\geqslant
 J_P^k.
\end{equation}

\begin{claim}
\label{cl.t1formula} There is $\widetilde K>0$ such that for every
$k$ large enough it holds
$$
\left|\det \frac{\partial(\bar{x}_k,\bar{y}_k,\bar{v}_k)}%
  {\partial (x_k,y_k,v_k)}\bigg|_{\pi  (\ffT_0^k(\mathcal{S}))}\right| > \widetilde K.
$$
\end{claim}
\begin{proof}
First, observe that $\det D\ffT_1 |_{\{u=0^{m-m_s\}}} (Y^-) \ne 0$. Otherwise,
the tangent space to $W^{cu}(P)$ at $Y^+$ (which is the image of
$\{ u=0^{m-m_s} \}$ by $D\ffT_1$) cannot be transverse to
$\mathcal{F}^{ss}(Y^+)$ at $Y^+$, contradicting assumption
{\bf{(C4)}}. In particular, $\det D\ffT_1 |_{\{u=0^{m-m_s}\}} \not= 0$ in a
neighbourhood $V$ of the tangency point $Y^-$.
Note that $\mathcal{S}\subset \Pi^+_k$ and $\ffT_0^k(\Pi^+_k) \subset
\Pi^-$ converges to $\{u=0^{m-m_s}\}$ as $k\to \infty$. Taking, if necessary, $\Pi^-$ small
 and $k$ large, we can guarantee that
$\ffT_0^k(\Pi^+_k)\subset V$. Then
$$
\det \frac{\partial(\bar{x}_k,\bar{y}_k,\bar{v}_k)}
  {\partial (x_k,y_k,v_k)}
  \bigg|_{\pi ( \ffT_0^k(\mathcal{S})) } \not =0.
 $$
 This concludes the proof.
  \end{proof}

Equation~\eqref{e.conesobasta} (and the lemma)  follows
from \eqref{e.determinante}, \eqref{e.exponential}, and
Claim~\ref{cl.t1formula}.
\end{proof}

\subsubsection{Expansion of diameters}
\label{sss.expdiameters} We now restrict our attention to the case
when  the saddle $P$ of $f$ has
$u$-index one (in particular,  $n=n_u=1$). Note that in this case, the
variables $v, \bar v$ in Equation~\eqref{e.linearisingcoor} do
not appear.

Using the linearising coordinates in~\eqref{e.linearisingcoor}, consider the  natural projections
\begin{equation}
 \label{e.proyecciones}
 \pi \colon W  \to \mathbb{R}^{m_s+1},\quad
 \pi_{c} \colon \mathbb{R}^{m_s+1}\to  \mathbb{R}^{m_s},
 \quad
 \pi_u \colon \mathbb{R}^{m_s+1}\to  \mathbb{R}.
 \end{equation}
Given a subset $A\subset (\{0^{m-m_s}\} \times
 \mathbb{R}^{m_s+1}) \cap W$
we define
\begin{equation}\label{e.diameter}
\diam_i(A)\eqdef \mathrm{diameter}(\pi_i(\pi(A)))\qquad  i=c,
\,u.
\end{equation}

\begin{lem}
\label{l.fromRussiawithconstants} There is a constant $K>0$,
depending only on the transition $\ffT_{1}$,
 such that
for every sufficiently large $k$ and every local submanifold $\cS\subset {\Pi}_k^+$
 of dimension $m_s+n$ tangent to
$\mathcal{C}^{cu}$ it holds
$$
\diam_c (\ffR_k (\cS) )<  K \, \gamma^k \, \diam_{u} (\cS),
$$
where $\gamma>1$ is the unstable leading
eigenvalue of $P$.
\end{lem}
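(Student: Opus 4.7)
The plan is to mimic the strategy of Lemma~\ref{lema1}, but now control the variation of the central coordinate $\bar{x}$ in the image rather than the total volume. Since the cone $\mathcal{C}^{cu}$ is a narrow Grassmannian neighbourhood of $E^{cu}$ (which projects bijectively onto the $(x,y)$-plane), the manifold $\cS$, being tangent to $\mathcal{C}^{cu}$ and of the matching dimension $m_s+1$, can be written as a graph $u = \phi(x,y)$ over a domain $\Omega \subset \mathbb{R}^{m_s+1}$ with $\phi$ having Lipschitz constant bounded uniformly in $k$ by the cone aperture. Under this parameterisation, $\diam_c(\cS)$ and $\diam_u(\cS)$ become the $x$- and $y$-diameters of $\Omega$.

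Next, I would estimate the partial derivatives of the composed map $\bar{x}_k \colon \Omega \to \mathbb{R}^{m_s}$ given by $(x,y) \mapsto \pi_c \circ \pi \circ \ffR_k(\phi(x,y),x,y)$. Using the linear form~\eqref{e.linearizacion} of $\ffT_0^k$ together with the Taylor expansion~\eqref{e.defT1} of $\ffT_1$, and the bounds $\|A^k\|,\|B^k\| \lesssim \lambda^k$ and $\|C^k\|\lesssim \gamma^k$ coming from the eigenvalue structure of the linearised dynamics, one obtains
\[
\|\partial_x \bar{x}_k\|_{\Omega} \leqslant K_1\, \lambda^k,\qquad \|\partial_y \bar{x}_k\|_{\Omega} \leqslant K_2\, \gamma^k,
\]
with constants depending only on the linear blocks of $D\ffT_1$ and the cone aperture. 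The higher-order Taylor terms contribute factors of $\|u\|+\|x\|+\|y-y^-\|$, which remain uniformly bounded since $\ffT_0^k(\cS) \subset \Pi^-$. Integrating these derivative bounds along a path in $\Omega$ consisting of a horizontal segment and a vertical segment joining two arbitrary points, the Mean Value Theorem yields $\diam_c(\ffR_k(\cS)) \leqslant K_1\lambda^k\diam_c(\cS) + K_2\gamma^k\diam_u(\cS)$.

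Since $\cS\subset\Pi^+$, the quantity $\diam_c(\cS)$ is uniformly bounded by $\diam(\Pi^+)$, so the first term is negligible for large $k$. Combined with the geometric observation that $\cS \subset \Pi_k^+$ (a slab of $y$-width of order $\gamma^{-k}$) and tangent to $\mathcal{C}^{cu}$ forces $\diam_u(\cS)$ to dominate $\gamma^{-k}\diam_c(\cS)$ up to a cone-aperture constant, the subdominant term is absorbed, yielding the desired estimate $\diam_c(\ffR_k(\cS)) < K\gamma^k\diam_u(\cS)$ with $K$ depending only on $\ffT_1$. The main obstacle will be precisely this absorption step, which requires a careful geometric argument combining the cone tangency condition with the thin-slab shape of the strip $\Pi_k^+$; this is the essential refinement beyond the simpler volume estimate of Lemma~\ref{lema1}.
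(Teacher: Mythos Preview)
Your overall strategy matches the paper's — both use the explicit formulas for $\ffT_0^k$ and $\ffT_1$ to bound the $x$-variation of the image — but yours is more elaborate than necessary. The paper bypasses the graph parametrisation and derivative estimates entirely: since every point of $\ffT_0^k(\cS)\subset\Pi^-$ has $|u_k|,|x_k|=O(\lambda^k)$ in \emph{absolute value} (the $(u,x)$-coordinates in $\Pi^+$ are bounded and $\ffT_0^k$ contracts them by $\|A^k\|,\|B^k\|$), the terms $A_2u_k+B_2x_k$ in~\eqref{e.defT1} contribute an additive $O(\lambda^k)$ to the variation of $\bar x$, and the remaining term $C_2(y_k-y^-)$ gives directly $K'\,\diam_u(\ffT_0^k(\cS))=K'\gamma^k\diam_u(\cS)$. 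No Mean Value Theorem or path integration is needed.

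Your absorption step, however, does not work. The cone condition says only that $T\cS$ is close to the $(x,y)$-plane; it imposes no relation between the $x$- and $y$-extents of $\pi(\cS)$ \emph{within} that plane. For instance (with $m_s=1$), the graph of $u=0$ over the rectangle $\{|x|<1,\ |y-y_0|<\gamma^{-2k}\}$ lies in $\Pi_k^+$, is tangent to $\mathcal{C}^{cu}$, and has $\diam_c\approx 2$ while $\diam_u\approx 2\gamma^{-2k}\ll\gamma^{-k}\diam_c$, so your claimed inequality $\diam_u(\cS)\gtrsim\gamma^{-k}\diam_c(\cS)$ is false in general. In fact the paper's own proof does not perform any such absorption: what both arguments honestly establish is
\[
\diam_c(\ffR_k(\cS))\leqslant K\,\gamma^k\diam_u(\cS)+O(\lambda^k),
\]
and this weaker bound already suffices for the only place the lemma is invoked (Claim~\ref{cl.hoy}), where one reaches $\rho/10<K\delta_{j_*}+o(1)$ — still a contradiction for large $k$. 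So drop the geometric absorption argument and keep the additive $O(\lambda^k)$; that is both correct and all that is needed.
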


\begin{proof}
Consider any $\cS$ as in the lemma and a point $X=(u,x,y)\in \cS \subset {\Pi}_k^+$. Let
$$
X_k=(u_k,x_k,y_k)\eqdef\ffT_0^k(X)\in \Pi^- \quad
 \widetilde X_k= \ffT_{1} (\ffT_0^k(X))=\ffR_k (X) \in \Pi^+.
$$
From~\eqref{e.linearizacion}, the coordinates $u_k$, $x_k$ are at
most of order of $\lambda^k$, where $\lambda<1$ is the leading stable
eigenvalue of $P$ in \eqref{e.igualalambda}.
Thus, by the formula of $\ffT_{1}$ in~\eqref{e.defT1} (where now the coordinate $v$ does not appear) the
$x$-coordinate of $\widetilde X_k$ is of the form
\begin{equation}\label{e.contribution}
x^+ +A_{2} u_k + B_{2}  x_k+C_{2} (y_k-y^{-})+\dots,
\end{equation}
 where $x^+$ is the $x$-coordinate of $Y^+$, $y^-$ is the $y$-coordinate of
$Y^-$, see Equation~\ref{e.Transversality}.
Note that Equation~\eqref{e.contribution} implies that there is a
constant $K>0$, independent of $\cS$ and large $k$, such that
\begin{equation}\label{e.penultima}
\diam_c (\ffR_k (\mathcal{S})) < K \, \diam_{u}
(\ffT_0^k(\mathcal{S})) = K\, \gamma^k\,\diam_{u} (\cS),
\end{equation}
where the last equality follows from  the linearising hypothesis.
 This completes the proof of  the lemma.
\end{proof}

\begin{remark}[Uniform estimates]
\label{r.uniform}
The constants $K$ and $L$ in Lemmas~\ref{lema1} and \ref{l.fromRussiawithconstants}
can be chosen uniform for perturbations. To see why this is so just note that the estimates in the proofs of these lemmas
only depend on the center unstable cone field (which persists by perturbations).
\end{remark}

\subsection{Return maps at the homoclinic tangency: the codimension one case}
\label{s.returnmapcoidmensionone}
We now refine the constructions in Section~\ref{ss.returnmap22}
in the codimension one case, when  the saddle $P$ has
$u$-index one and hence $n=n_u=1$.
Recall  the choices
of the tangency points $Y^-$ and $\ffT_1(Y^-)=Y^+$
in \eqref{e.Transversality},
the reference boxes $\Pi^\pm$, and
the strips $\Pi_k^\pm$ in \eqref{e.defPik}.

Assuming that the saddle $P$ is
biaccumulated by its transverse homoclinic points (recall the definition in the preamble of
Section~\ref{s.dominatedhorseshoed})
we will ``resize"  the sets $\Pi^+_k$  to
get additional dynamical control. We now go to the details.
Our construction is inspired by
 \cite[Section 1.4]{GonShiTur:08}
(see also previous constructions in \cite{New:79, PalVia:94}).

\begin{lem}
\label{lemma:herradura} Consider  $f\in \mathrm{Diff}^r(M^{m+1})$ with a saddle $P$ of  $u$-index one that is
biaccumulated by its transverse homoclinic points and has a homoclinic tangency.
Assume that conditions {\bf{(C0)}} -- {\bf{(C4)}} hold.
Then the reference box $\Pi^-$ can be chosen such that
there are sequences of
points $(N^-_j)$ and $(N^+_j)$ and of  $m$-dimensional submanifolds
$(\Theta_j)$ with
$$
N^\pm_j\in W^s(P,f)\pitchfork
W^u_{\loc}(P,f), \quad
N^\pm_j \in \Theta_j \subset  W^s(P,f)\cap \Pi^-,
$$
  such that:
\begin{itemize}
\item
the open ``interval'' $(N^-_j, N^+_j)\subset
W^u_{\mathrm{loc}}(P, f)$ contains $Y^-$ and satisfies
$$
\delta_j\eqdef\diam([N^-_j,N^+_j])\to 0 \quad \mbox{as} \quad j\to \infty,
$$
\item
each submanifold
$\Theta_j$
splits the set $\Pi^-$ into two connected components such that
the component $C_j$ of $(\Pi^- \setminus \Theta_j)$ containing  $Y^-$ satisfies
$C_{j+1} \subset C_j$.
\end{itemize}
\end{lem}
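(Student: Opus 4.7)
I follow the classical Newhouse construction, adapted to higher dimensions as in~\cite{GonShiTur:08, PalVia:94}. The idea is to exploit biaccumulation to produce a sequence of branches of $W^s(P,f)$ passing through $\Pi^-$ that $C^1$-approximate the tangent branch of $W^s(P,f)$ through $Y^-$, and to use the quadratic nature of the tangency to conclude that these approximating branches transversely cut $W^u_{\mathrm{loc}}(P,f)$ in pairs of points bracketing $Y^-$.

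Using biaccumulation, I pick transverse homoclinic points $q^\pm$ in the two connected components of $W^u_{\mathrm{loc}}(P,f)\setminus\{P\}$ and small $m$-dimensional disks $\Sigma^\pm \subset W^s(P,f)$ transverse to $W^u_{\mathrm{loc}}(P,f)$ at $q^\pm$. By the inclination lemma applied to $f^{-1}$, the backward iterates $f^{-n}(\Sigma^\pm) \subset W^s(P,f)$ $C^1$-accumulate on arbitrarily large compact pieces of $W^s_{\mathrm{loc}}(P,f)$; in particular, on a fixed compact neighborhood of $Y^+$ inside $W^s_{\mathrm{loc}}(P,f)$. Pulling these back by $\ffT_1^{-1}=f^{-\ell}$ (which is defined on a neighborhood of $Y^+$ in $\Pi^+$ and sends $Y^+$ to $Y^-$), I obtain $m$-dimensional disks $\widetilde{\Theta}^\pm_n \subset W^s(P,f) \cap \Pi^-$ that $C^1$-accumulate, as $n\to\infty$, on the branch $\ffT_1^{-1}(W^s_{\mathrm{loc}}(P,f))$ of $W^s(P,f)$ passing through $Y^-$. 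Because $Y$ is a homoclinic tangency, this limit branch is tangent to $W^u_{\mathrm{loc}}(P,f)$ at $Y^-$, and by {\bf (C3)} the tangency is quadratic. Hence, in suitable local coordinates on $\Pi^-$ the limit branch is a paraboloid tangent at $Y^-$ to the one-dimensional leaf $W^u_{\mathrm{loc}}(P,f)$, so each $\widetilde{\Theta}^\pm_n$ is a small $C^1$-perturbation of this paraboloid and therefore either misses $W^u_{\mathrm{loc}}(P,f)$ locally or cuts it transversely at exactly two nearby points. Biaccumulation gives approximating disks on both sides of the paraboloid (one side coming from $q^+$, the other from $q^-$), so at least one of the families—say $\widetilde{\Theta}^+_n$—lies, for all large $n$, on the side producing two transverse crossings $N_n^-$ and $N_n^+$ of $W^u_{\mathrm{loc}}(P,f)$ bracketing $Y^-$. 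Setting $\Theta_j=\widetilde{\Theta}^+_{n_j}$ along a subsequence $n_j\to\infty$ gives the desired sequence, and $\diam([N_j^-,N_j^+])\to 0$ follows from the $C^1$-convergence to the tangent paraboloid.

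For the separation and nesting, I shrink $\Pi^-$ from the outset so that each codimension-one disk $\Theta_j$ in the $(m+1)$-dimensional box $\Pi^-$ separates $\Pi^-$ into two components $C_j$ (containing $Y^-$) and its complement. The nesting $C_{j+1}\subset C_j$ is obtained by extracting a subsequence so that $\Theta_{j+1}$ lies strictly between $\Theta_j$ and the tangent branch in the local coordinates above, which is possible because, after passing to a subsequence, the family $\widetilde{\Theta}^+_n$ converges monotonically to the limit branch. The main obstacle is the transversality step: showing that the $C^1$-perturbations of the tangent paraboloid actually cross $W^u_{\mathrm{loc}}(P,f)$ transversely in two points, rather than remaining tangent or missing it altogether, and identifying which of the two families $\widetilde{\Theta}^\pm_n$ falls on the productive side. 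This is precisely where hypothesis {\bf (C3)} (quadratic contact) combines with the biaccumulation to force the existence of the bracketing pairs $N_j^\pm$.
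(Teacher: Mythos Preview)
Your proposal is correct and follows essentially the same route as the paper's proof: use biaccumulation together with the $\lambda$-lemma to produce $m$-dimensional disks of $W^s(P,f)$ in $\Pi^+$ accumulating on $W^s_{\mathrm{loc}}(P,f)$ from both sides, pull back by $\ffT_1^{-1}$ to $\Pi^-$, and then exploit the quadratic tangency {\bf(C3)} so that the disks on the appropriate side of the paraboloid $\ffT_1^{-1}(W^s_{\mathrm{loc}}(P,f))$ cut the one-dimensional $W^u_{\mathrm{loc}}(P,f)$ transversely in two points bracketing $Y^-$. The paper's argument is considerably more terse (it does not spell out the inclination lemma, the quadratic geometry, or the subsequence ensuring the nesting $C_{j+1}\subset C_j$), but the underlying mechanism is identical; your version simply makes explicit the steps the paper takes for granted.
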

\begin{proof}
The codimension one hypothesis implies that $W^s_{\loc}(P,f)$ splits the
box $\Pi^+$
 into two connected components.
 So the same holds for $\Pi^-$ and $\ffT_1^{-1} (W^s_{\loc}(P,f))$. By the biaccumulation property, both components contain disks of
 $W^s(P,f)$ accumulating to $\ffT^{-1} (W^s_{\loc}(P,f))$. The disks in one of these components
 also have two transverse intersections with $W^u_{\mathrm{loc}}(P,f)$ accumulating to $Y^-$.
 This provides the sequences of submanifolds $\Theta_j$ and of transverse homoclinic points $N_j^\pm$.
\end{proof}

Fix large $j$ and recall
the definition of
$\Pi_k^\pm$ in \eqref{e.defPik}.
 For every $k$
large enough, the set $\Pi^-_k \setminus \Theta_j$
has three connected components. Denote by ${\Pi}^-_{j,k}$ the closure
of the connected component intersecting $C_j$  and let

 \begin{equation}
 \label{e.defPik2}
 \Pi_{j,k}^+ \eqdef
   \ffT_0^{-k}(\Pi^-_{j,k}) \cap \Pi^+  \subset \Pi^+_k.
 \end{equation}
Define the {\em{$s$-} } and  {\em{$u$-boundaries}}
 of  ${\Pi}_{j,k}^+$
by
\begin{equation}
\label{e.bordes}
\partial^s
\Pi^+_{j,k}\eqdef     \ffT_0^{-k}(\Theta_j\cap \Pi_k^-)\cap
\Pi^+_k
\qquad
\partial^u \Pi^+_{j,k}
\eqdef \overline{\partial  \Pi^+_{j,k}\setminus
\partial^s \Pi^+_{j,k}}.
\end{equation}
Observe that, by construction, $\partial^s  \Pi^+_{j,k}\subset W^s(P,f)$.

Recall the definitions of $\gamma>1$ in \eqref{e.igualalambda} and
of the $c$- and $u$-diameters in~\eqref{e.diameter}. Next claim
asserts that $c$-diameters of $\Pi_k^+$ are ``independent" of $k$
while $u$-diameters
 exponentially shrink.
The proof  is straightforward and hence omitted.

\begin{claim}[$c$- and $u$-diameters of $\Pi_{j,k}^+$]
\label{cl.inextremis} There is  $\rho>0$ such that  for every sufficiently large $j$
there is $k_0(j)$
 such that for every
 $k \geqslant k_0(j)$
it holds
$$
\diam_c ( \Pi^+_{j,k}) >\rho \quad \mbox{and} \quad
\diam_u({\Pi}^+_{j,k}) < \gamma^{-k} \, \delta_j.
$$
\end{claim}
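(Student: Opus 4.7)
I would prove the claim by a direct computation in the linearising coordinates $(u, x, y) \in \mathbb{R}^{m-m_s} \times \mathbb{R}^{m_s} \times \mathbb{R}$ provided by hypothesis \textbf{(C0)}, in which $\ffT_0 = f|_W$ acts as $(u, x, y) \mapsto (Au, Bx, \gamma y)$ with $A, B$ linear contractions of spectral radius at most $\lambda < 1$ and $\gamma > 1$ the unstable leading multiplier of $P$. In these coordinates $Y^- = (0, 0, y^-)$, the unstable manifold $W^u_{\mathrm{loc}}(P, f)$ is the $y$-axis, and $\Pi^-$ is a product box around $Y^-$.

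The first step is to describe the geometry of $\Pi_k^-$. From \eqref{e.linearizacion} and \eqref{e.defPik}, a point $(u, x, y) \in \Pi^+$ belongs to $\Pi_k^+$ iff $(A^k u, B^k x, \gamma^k y) \in \Pi^-$; for $k$ large the conditions on $A^k u$ and $B^k x$ are automatic on the bounded set $\Pi^+$, so $\Pi_k^+$ is the slice of $\Pi^+$ with $\gamma^k y$ in the $y$-range of $\Pi^-$. Applying $\ffT_0^k$ then shows that $\Pi_k^-$ is a thin slab around the $y$-axis whose $(u, x)$-components satisfy $\|(u, x)\| \leq C \lambda^k$ (with $C$ depending only on $A$, $B$, and $\Pi^+$) and whose $y$-coordinate covers the full $y$-range of $\Pi^-$.

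Next, I would analyse the cut by $\Theta_j$. Since $N_j^\pm = (0, 0, y_j^\pm)$ is a transverse intersection of $\Theta_j \subset W^s(P, f)$ with the $y$-axis $W^u_{\mathrm{loc}}(P, f)$, locally $\Theta_j$ is a $C^1$ graph $y = \phi_j^\pm(u, x)$ with $\phi_j^\pm(0, 0) = y_j^\pm$, valid in a fixed neighbourhood of $N_j^\pm$ independent of $k$. For $j$ fixed, the bound $\|(u, x)\| \leq C \lambda^k$ on $\Pi_k^-$ forces $\Pi_k^- \cap \Theta_j$ to lie entirely in these two graph neighbourhoods once $k \geq k_0(j)$ is large enough. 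The three connected components of $\Pi_k^- \setminus \Theta_j$ are then $\{y < \phi_j^-(u, x)\}$, $\{\phi_j^-(u, x) < y < \phi_j^+(u, x)\}$ and $\{y > \phi_j^+(u, x)\}$; the middle one contains $Y^-$ since $y^- \in (y_j^-, y_j^+)$, and therefore equals $\Pi_{j, k}^-$.

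Both estimates then follow. For $\diam_u$, continuity of $\phi_j^\pm$ together with $\|(u, x)\| \leq C \lambda^k$ on $\Pi_k^-$ gives $\phi_j^+(u, x) - \phi_j^-(u, x) \leq \delta_j + O(\lambda^k)$ on $\Pi_{j, k}^-$; since $\ffT_0^{-k}$ contracts $y$ by $\gamma^{-k}$, enlarging $k_0(j)$ so the $O(\lambda^k)$ correction is negligible relative to $\delta_j$ yields $\diam_u(\Pi_{j, k}^+) < \gamma^{-k} \delta_j$ (the strict inequality absorbing a harmless constant if needed). For $\diam_c$, the $\Theta_j$-cut modifies only the $y$-coordinate, so the $(u, x)$-projection of $\Pi_{j, k}^-$ coincides with that of $\Pi_k^-$; pulling back by $\ffT_0^{-k}$ (which acts as $A^{-k} \oplus B^{-k}$ on $(u, x)$) and intersecting with $\Pi^+$ recovers the full $(u, x)$-range of $\Pi^+$, so $\diam_c(\Pi_{j, k}^+) = \diam_c(\Pi^+)$, and any $\rho$ smaller than this fixed positive quantity works. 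The main obstacle in the plan is the second step: to justify that $\Theta_j \cap \Pi_k^-$ is confined to the graph neighbourhoods of $N_j^\pm$ when $k$ is large, which is precisely the scale-separation between the shrinking $(u, x)$-width of $\Pi_k^-$ and the fixed-scale geometry of the homoclinic sheet $\Theta_j$.
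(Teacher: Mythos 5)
The paper gives no argument here (``The proof is straightforward and hence omitted''), so there is nothing to compare against except the intended straightforward computation --- and your proposal is exactly that computation, correctly carried out. The description of $\Pi_k^-$ as a slab of $(u,x)$-width $O(\lambda^k)$ around $W^u_{\mathrm{loc}}(P,f)$ covering the full $y$-range of $\Pi^-$, the scale-separation argument confining $\Theta_j\cap\Pi_k^-$ to the two graph neighbourhoods of $N_j^\pm$ for $k\geqslant k_0(j)$, the identification of the middle component with $\Pi^-_{j,k}$, and the observation that the cut only constrains the $y$-coordinate (so the $x$-range of $\Pi^+_{j,k}$ is the full $x$-range of $\Pi^+$, giving a $\rho$ independent of $j$ and $k$) are all sound and are what the omitted proof must be.

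One small point deserves to be stated more honestly than ``the strict inequality absorbing a harmless constant if needed''. Since $\phi_j^\pm$ are not constant, the $y$-extent of the middle component of $\Pi_k^-\setminus\Theta_j$ is $\delta_j+O(L_j\lambda^k)$ with $L_j=\mathrm{Lip}(\phi_j^\pm)$, and this additive error is strictly positive, so after applying $\ffT_0^{-k}$ one gets $\diam_u(\Pi^+_{j,k})\leqslant \gamma^{-k}(\delta_j+O(L_j\lambda^k))$, which is never literally $<\gamma^{-k}\delta_j$. What enlarging $k_0(j)$ does buy is $\diam_u(\Pi^+_{j,k})<\gamma^{-k}(1+\varepsilon)\delta_j$ for any prescribed $\varepsilon>0$, since $L_j$ and $\delta_j$ are fixed once $j$ is. This multiplicative slack is innocuous for the only use of the claim (Remark~\ref{r.choicesdelta} and Claim~\ref{cl.hoy}, where $K\delta_{j_*}$ is required to beat $\rho/10$ with room to spare), but the clean statement of what your argument proves is the bound with $2\delta_j$, say, in place of $\delta_j$. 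With that caveat made explicit, the proof is complete.
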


\begin{remark}[Choice of quantifiers]\label{r.choicesdelta}
Recall the constants $K$, $\delta_{j}$, and $\rho$ in
Lemmas~\ref{l.fromRussiawithconstants}  and ~\ref{lemma:herradura} and Claim~\ref{cl.inextremis}, respectively. There is  large  $j_*\geqslant 1$ such that
$$
K \delta_{j_*} < \frac{\rho}{10 }.
$$
As in what follows $j_*$ remains fixed, for notational simplicity, we
write
\begin{equation}
\label{e.choicesdelta}
 \widetilde \Pi^\pm_k \eqdef \Pi^\pm_{j_*,k},
 \quad \mbox{where} \quad
    k>k_0(j_*).
\end{equation}
\end{remark}

\section{Unfolding the homoclinic tangency}
\label{s.indexvariation}

Throughout this section, we consider $f\in \mathrm{Diff}^r(M^{m+n})$,  $r,\, m \geqslant 2$
and $n
\geqslant 1$, with a
simple saddle $P$ of type
$(m_s,n_u)$ and $u$-index $n$
having a homoclinic
tangency $Y$
satisfying {\bf{(C0)}}--{(\bf{C5)}}.
We will study  how the unfolding of this tangency
generates saddles with
 $u$-indices different from the one of $P$.

\subsection{Unfolding families and return maps}
\label{ss.unfoldingreturn}
We embed the diffeomorphism $f$
 into a family $(f_{\mathbf{t}})$, with $f_{\mathbf{0}}=f$,
 unfolding the homoclinic tangency.
 For that, we borrow the construction from
\cite[Section 1.2]{GonShiTur:08}.
The number  of parameters of the
family $(f_{\mathbf{t}})$ depends on the type of the saddle $P$
  (which is fixed for every $f_{\mathbf{t}}$).
  We use the notation in Section~\ref{ss.returnmap22}.
 Note that  transition map $\ffT_{1, \mathbf{t}}\eqdef f_{\mathbf{t}}^\ell$, see \eqref{e.defT1},
   is defined for every small $\mathbf{t}$.
   Recall that $\lambda$ and $\gamma$ defined as in \eqref{e.igualalambda}  are the
   modulus of the contracting and expanding leading multipliers of $P$.

Using the
{\em{effective dimension,}} denoted by $\mathrm{d}_{\mathrm{e}}$,  introduced
in \cite{Tur:96}, there are three classes of simples saddles with leading Jacobian greater than one:
\begin{itemize}
\item
$P$ has  $\mathrm{d}_{\mathrm{e}}=1$ if it is of type $(1,1)$ or $(1,2)$ and $\lambda\gamma>1$;
\item
 $P$ has  $\mathrm{d}_{\mathrm{e}}=2$ if it is of type $(1,2)$ with $\lambda\gamma<1$ or of type $(2,1)$ or $(2,2)$ with $\lambda^2\gamma>1$;
\item
 $P$ has  $\mathrm{d}_{\mathrm{e}}=3$  if it is of type  $(2,2)$ and $\lambda^2\gamma<1$.
\end{itemize}}

The effective dimension is the number of parameters considered in the unfolding of the
tangency. Let us roughly explain the choice of  these parameters.
When  $\mathrm{d}_{\mathrm{e}}=3$ the parameter
$\mathbf{t}=(t,\alpha, \beta)\in [-\varepsilon, \varepsilon]^3$
(in cases  $\mathrm{d}_{\mathrm{e}}=2$  and $\mathrm{d}_{\mathrm{e}}=1$ we consider
 $\mathbf{t}=(t,\alpha)\in [-\varepsilon, \varepsilon]^2$
and $\mathbf{t}=t\in [-\varepsilon, \varepsilon]$, respectively) is defined as follows.
 Let\footnote{In this description, we omit
 the neighbourhood in the definition of  the local manifold.}
 $\omega^u_{\mathbf{t}} \eqdef W^u_{\mathrm{loc}} (P,f_{\mathbf{t}}) \cap \Pi^-$,
 then
 the distance between the
 folding point of
 $\ffT_{1, \mathbf{t}} (\omega^u_{\mathbf{t}})$
  and $W^s_{\mathrm{loc}}(P,f_{\mathbf{t}})$ is  $|t|$.
 To describe   $\alpha$ note that
 in this case the leading  multipliers
 of $P$ for $f$
are nonreal and equal to
 $\lambda \, e^{\pm i\theta}$ and
 $\gamma \, e^{\pm i \rho}$. The leading multipliers
of $P$ for $f_{\mathbf{t}}$ are
 $\lambda \, e^{\pm i(\theta+\alpha)}$ and
 $\gamma \, e^{\pm i (\rho+\beta)}$.
 We let $\ffT_{0,\mathbf{t}}= f_{\mathbf{t}}|_W$
 (here $W$ is the linearising neighbourhood in Section~\ref{ss.genericc})
 and note that
 $\ffT_{0,\mathbf{t}}$ is a
``double rotation'' of  $\ffT_{0}$ by angles $\alpha$ and $\beta$.

Note that in the previous construction
$P_{f_{\mathbf{t}}}=P$ and that
the leading Jacobian of $P$ is not modified:
\begin{equation}
\label{e.constantleadingJ}
J_P (f_{\mathbf{t}})=
J_P(f).
\end{equation}

Recall the
construction of the return maps $\ffR_k=\ffT_1 \circ \ffT_0^k$
in Section~\ref{ss.returnmap22}. We now consider
return maps
$\ffR_{k, \mathbf{t}} =\ffT_{1, \mathbf{t}} \circ \ffT_{0, \mathbf{t}}^k$
similarly defined.
 For large $k>0$,
 as in \eqref{e.defPik}, we consider the strips
 $$
 \Pi_{k,  \mathbf{t}}^+ \eqdef \ffT_{0, \mathbf{t}}^{-k}(\Pi^-) \cap \Pi^+.
 $$
We now consider, for $k_0$ large enough, the corresponding return maps
\begin{equation}\label{e.return}
\ffR_{\mathbf{t}} \colon \Sigma^+_{\mathbf{t}}
\eqdef
\bigcup_{k\geqslant k_0}\Pi^+_{k,  \mathbf{t}} \to \Pi^+,\quad
\ffR_{\mathbf{t}}|_{\Pi^+_{k,  \mathbf{t}}}=\ffR_{k,\mathbf{t}} \eqdef
\ffT_{1,\mathbf{t}} \circ \ffT_{0,\mathbf{t}}^k.
\end{equation}
A saddle of $f_{\mathbf{t}}$ is said of {\em{single round type}} if it is a fixed point of
$\ffR_{\mathbf{t}}$.

\begin{remark}[The codimension one case]\label{r.variedades}
Recall the definition of
$\widetilde{\Pi}^{+}_{k}=\Pi^+_{j_*,k}$ in~\eqref{e.defPik2} and~\eqref{e.choicesdelta}.
Observe that the  $s$-boundary of $\widetilde{\Pi}^{+}_{k}$ is not
necessarily contained in the stable manifold of $P$ for
$f_{\mathbf{t}}$ when $\bf{t}\not=0$. However,
the stable manifold of $P$ on $\Pi^+$ varies smoothly with
$\mathbf{t}$, for $|\mathbf{t}|$ small enough. Then we can define
continuations of $\widetilde{\Pi}^{+}_{k, \bft}\subset \Pi^+_{k, \bft}$ for
small $|\mathbf{t}|$ in such a way  the $s$-boundaries
are still contained $W^s(P,f_{\mathbf{t}})$
and the
estimates for the $c$- and $u$-diameters in~\eqref{cl.inextremis}
hold.
\end{remark}

\subsection{Index variation}\label{ss.indexvariation}
Our next  goal  is to prove the following proposition, which is essentially borrowed from
\cite{Rom:95, GonShiTur:08}. For the next proposition see the cone fields  $\mathcal{C}^{cu}$ in Proposition~\ref{prop:cone} and recall the definition of the leading Jacobian in  \eqref{e.J}.

\begin{prop} \label{p.bis.eracl.2}
Consider $f\in \mathrm{Diff}^r(M^{m+n})$, $r,m\geqslant 2$ and $n\geqslant 1$, with a simple saddle $P$ of $u$-index $n$
 such that
 $J_P=J_P(f)>1.$
Assume that $P$ has  a homoclinic tangency
satisfying conditions {\bf{(C0)}} -- {\bf{(C5)}}.

Let
$(f_{\mathbf{t}})$ be the unfolding family associated to $f$ in
Section~\ref{ss.unfoldingreturn}. Then there is a sequence of open
sets of  parameters $\Delta_k\to \mathbf{0}$ such that for every
$\bft\in \Delta_k$ there are a contracting locally normally
hyperbolic $C^r$ submanifold $\mathcal{M}_{k,\bft}$ of dimension
$n+\mathrm{d}_{\mathrm{e}}$ tangent to the cone field $\mathcal{C}^{cu}$
satisfying:
\begin{itemize}
\item[(i)]
If $\mathrm{d}_{\mathrm{e}}=1$:
 there is
a  saddle $R_{k,\mathbf{t}}\in\mathcal{M}_{k,\mathbf{t}}$ of
$u$-index $n+1$.
\item[(ii)]
 If $\mathrm{d}_{\mathrm{e}}=2$:
\begin{itemize}
\item[$\bullet$]
 when $P$ is of type
$(1,2)$ and $\lambda\gamma>1$
 there are saddles $R_{k,\mathbf{t}},\,S_{k,\mathbf{t}}\in\mathcal{M}_{k,\mathbf{t}}$ of
$u$-indices $n-1$ and $n+1$, respectively,
\item[$\bullet$]
when  $P$ is of type
$(2,1)$ or
$(2,2)$ and $\lambda^2\gamma>1$
there are
saddles
$R_{k,\mathbf{t}}, \, S_{k,\mathbf{t}}
\in\mathcal{M}_{k,\mathbf{t}}$ of $u$-indices $n+1$ and $n+2$,
respectively,
\end{itemize}
\item[(iii)]
 If $\mathrm{d}_{\mathrm{e}}=3$:
  there are
saddles $Q_{k,\mathbf{t}},\, R_{k,\mathbf{t}},\, S_{k,\mathbf{t}}
\in\mathcal{M}_{k,\mathbf{t}}$ of $u$-indices $n-1$, $n+1$ and $n+2$, respectively.
\end{itemize}
Moreover, these saddles are of single round type and have
local unstable manifolds
contained in $\mathcal{M}_{k,\mathbf{t}}$.
\end{prop}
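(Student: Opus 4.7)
The plan is to follow the rescaling / renormalisation scheme of Gonchenko--Shilnikov--Turaev. For each large $k$ I introduce a linear change of coordinates on $\Pi^+_{k,\bft}$, together with a matching affine reparametrisation of the unfolding parameter $\bft$, both scaling like appropriate powers of $\lambda^k$ and $\gamma^{-k}$ along the leading directions of $P$. Using the linearisation (C0) of $\ffT_{0,\bft}$ and the Taylor expansion \eqref{e.defT1} for $\ffT_{1,\bft}$, the return map $\ffR_{k,\bft}=\ffT_{1,\bft}\circ\ffT_{0,\bft}^k$ decomposes in the rescaled coordinates as a map that is strongly contracting in $m-\mathrm{d}_{\mathrm{e}}$ directions and, on every fixed compact set in the remaining $n+\mathrm{d}_{\mathrm{e}}$ ``essential'' coordinates, converges in $C^r$ to an explicit limit $\ffR^{(\mathrm{d}_{\mathrm{e}})}_\infty$: a H\'enon-like map for $\mathrm{d}_{\mathrm{e}}=1$, a two-dimensional quadratic (possibly composed with a planar rotation carrying a nonreal leading argument) for $\mathrm{d}_{\mathrm{e}}=2$, and a three-dimensional analogue for $\mathrm{d}_{\mathrm{e}}=3$. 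The quasi-transversality conditions (C4)--(C5) supply the non-degeneracy of the coefficients $B_3$, $C_2$, $C_3$ in \eqref{e.defT1} required for this convergence.

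Next I analyse $\ffR^{(\mathrm{d}_{\mathrm{e}})}_\infty$ to locate hyperbolic fixed points with the $u$-indices claimed in (i)--(iii). The rescaled parameter $\widetilde{t}$ plays the role of the standard H\'enon bifurcation parameter, while $\widetilde{\alpha}$ and $\widetilde{\beta}$ (when present) control the rotation arguments coming from the complex leading multipliers. The hypothesis $J_P(f)>1$, which is preserved along $(f_\bft)$ by \eqref{e.constantleadingJ}, forces the Jacobian of the limit along the $\mathrm{d}_{\mathrm{e}}$ essential directions to be strictly greater than one, and this forces the coexistence of hyperbolic fixed points whose $u$-indices are $n$ shifted by $+1$, $-1$ or $+2$ in the combinations prescribed by the three cases. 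I then let $\widetilde{\Delta}$ be the open set of rescaled parameters on which all the required fixed points are simultaneously hyperbolic with the prescribed index, and define $\Delta_k$ as the preimage of $\widetilde{\Delta}$ under the reparametrisation; since the rescaling matrix has norm going to zero, $\Delta_k\to\mathbf{0}$ as $k\to\infty$.

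Finally I transfer this information back to $\ffR_{k,\bft}$ via normal hyperbolicity. In the rescaled picture $\ffR_{k,\bft}$ is $C^r$-close to $\ffR^{(\mathrm{d}_{\mathrm{e}})}_\infty$ on the $(n+\mathrm{d}_{\mathrm{e}})$-dimensional essential distribution (which lies inside $\mathcal{C}^{cu}$ by the cone-field invariance of Remark~\ref{r.setLambda}) and contracts strongly in the complementary $(m-\mathrm{d}_{\mathrm{e}})$-dimensional normal distribution, with contraction rate tending to zero as $k\to\infty$. Applying the Hirsch--Pugh--Shub persistence theorem to a compact region containing the model fixed points produces, for every $\bft\in\Delta_k$ and every sufficiently large $k$, a $C^r$ locally invariant, contracting normally hyperbolic submanifold $\mathcal{M}_{k,\bft}\subset\Pi^+_{k,\bft}$ of dimension $n+\mathrm{d}_{\mathrm{e}}$ tangent to $\mathcal{C}^{cu}$, on which the restricted dynamics is $C^r$-close to $\ffR^{(\mathrm{d}_{\mathrm{e}})}_\infty|_{\widetilde{\bft}}$. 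The model fixed points persist as fixed points of $\ffR_{k,\bft}|_{\mathcal{M}_{k,\bft}}$, hence as single-round saddles of $f_\bft$ with the prescribed $u$-indices, and normal hyperbolicity forces their local unstable manifolds to lie in $\mathcal{M}_{k,\bft}$.

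The main obstacle is the rescaling computation in the first step, and in particular the subcase $(m_s,n_u)=(2,2)$ with $\lambda^2\gamma<1$: the essential limit lives in dimension three, three rescaled parameters must be handled simultaneously, and the rotations in both leading planes have to be tracked along the $k$ iterates of $\ffT_{0,\bft}$. This is a standard, if lengthy, adaptation of \cite[Corollary~1]{GonShiTur:08} together with \cite{Rom:95} and \cite{GonTurShi:93,GonTurShi:93b}, and these references cover all the subcases required.
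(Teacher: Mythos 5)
Your proposal is correct and takes essentially the same route as the paper: the paper's own proof consists of invoking \cite[Theorem~3, Equation~(1.2) and Lemma~2]{GonShiTur:08} (applied to the inverse family $(f^{-1}_{\bft})$) together with \cite{Rom:95}, which are precisely the rescaling/renormalisation results whose derivation you sketch before citing them yourself. The only substantive detail worth noting is that, since the hypothesis here is $J_P(f)>1$, the paper matches the conventions of those references by renormalising the inverse family rather than $(f_{\bft})$ directly.
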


\begin{remark} \label{r.bis.eracl.2}
The submanifold $\mathcal{M}_{k,\mathbf{t}}$ is contained in the set
$\Pi^+_{k, \bft}$
 in~\eqref{e.defPik} for $k$ large enough and
$\bft \in \Delta_k$.
In the codimension one case, one gets that for
 $\mathcal{M}_{k,\bft} \subset
\widetilde{\Pi}^{+}_{k,\mathbf{t}}$ for every
large $k$ and $\mathbf{t} \in \Delta_k$.
\end{remark}

\begin{proof}
By~\cite[Theorem.~3, Equation~(1.2) and Lemma~2]{GonShiTur:08}
applied to
the unfolding family $(f^{-1}_{\bft})$ (see also~\cite[Theorems~A and
C]{Rom:95}), for sufficiently large $k$, there is a set $\Delta_k$
of parameters $\bft$, with $\Delta_k \to \mathbf{0}$ as $k \to
\infty$, such that the return maps $\ffR_{k,\bft}$ have a saddle
$R_{k,\bft}$ of $u$-index
 $n+1$. Besides,
 there are also saddles
$S_{k,\bft}$ of $u$-index $n+2$ (in cases (ii) and (iii)) and  $Q_{k,\bft}$ of $u$-index $n-1$
  (in case (iii)).
 These saddles are of single round type.

 Moreover, for the parameters $\bft\in \Delta_k$, there is also a
 contracting normally  hyperbolic $C^r$ manifold
$\mathcal{M}_{k,\bft}\subset  \Pi^+_{k,\bft}$
 (resp.~$\widetilde \Pi^+_{k, \mathbf{t}}$)
 which
is locally $f_{\bft}$ invariant, tangent to the cone field
$\mathcal{C}^{cu}$, and contains local unstable manifolds of
$R_{k,\bft}$,  $S_{k,\bft}$, $Q_{k,\bft}$ according to the cases. The dimension of
$\mathcal{M}_{k,\bft}$ is $n+ \mathrm{d}_{\mathrm{e}}$. This completes the proof.
\end{proof}

In the codimension one case we can obtain additional conclusions,
compare with  \cite[Lemmas 11 and 12]{LiTur:20}.

\begin{lem} \label{l.sebastopol}
Under the assumption of Proposition~\ref{p.bis.eracl.2}, if the saddle $P$ has $u$-index one
and it is
biaccumulated by its transverse homoclinic points,
then for every large $k$
and every $\bft\in \Delta_k$ it holds:
\begin{itemize}
\item
$W^u (R_{k,\bft},
f_{\mathbf{t}})\pitchfork
\partial^s\widetilde  \Pi^+_{k,\mathbf{t}} \ne \emptyset$,
in cases (i) and (ii)  of Proposition~\ref{p.bis.eracl.2},
\item
 $W^u (S_{k,\bft},  f_{\mathbf{t}})\pitchfork
\partial^s\widetilde \Pi^+_{k,\mathbf{t}} \ne \emptyset$,
in case (ii)  of Proposition~\ref{p.bis.eracl.2}.
\end{itemize}
In particular, as $\partial^s\widetilde \Pi^+_{k, \mathbf{t}}  \subset W^s(P,
 f_{\mathbf{t}})$,
it holds
$$
W^u (R_{k,\mathbf{t}},  f_{\mathbf{t}})\pitchfork W^s (P,
f_{\mathbf{t}})\ne \emptyset \quad \text{and} \quad W^u
(S_{k,\mathbf{t}},  f_{\mathbf{t}})\pitchfork W^s (P,
f_{\mathbf{t}})\ne \emptyset.
$$
\end{lem}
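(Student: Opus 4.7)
The strategy is to iterate a large disk $\mathcal{D}\subset\mathcal{M}_{k,\mathbf{t}}$ containing $R_{k,\mathbf{t}}$ in its interior under the return map $\ffR_{k,\mathbf{t}}$, use the volume expansion of Lemma~\ref{lema1} to force this disk out of $\widetilde{\Pi}^+_{k,\mathbf{t}}$, exclude the $u$-boundary as the exit direction by the diameter estimate of Lemma~\ref{l.fromRussiawithconstants} together with the smallness of $\delta_{j_*}$ (Remark~\ref{r.choicesdelta}), and finally transfer the resulting transverse crossing of $\partial^s\widetilde{\Pi}^+_{k,\mathbf{t}}$ from $\mathcal{D}$ to $W^u(R_{k,\mathbf{t}})$. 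I focus on $R_{k,\mathbf{t}}$; the case of $S_{k,\mathbf{t}}$ is analogous (and easier since $\dim W^u(S_{k,\mathbf{t}})=\dim\mathcal{M}_{k,\mathbf{t}}$).

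By Proposition~\ref{p.bis.eracl.2} and Remark~\ref{r.bis.eracl.2}, $W^u_{\mathrm{loc}}(R_{k,\mathbf{t}},f_{\mathbf{t}})\subset\mathcal{M}_{k,\mathbf{t}}\subset\widetilde{\Pi}^+_{k,\mathbf{t}}$, with $\mathcal{M}_{k,\mathbf{t}}$ tangent to $\mathcal{C}^{cu}$ and of dimension $n+\mathrm{d}_{\mathrm{e}}=m_s+n$, and $R_{k,\mathbf{t}}$ is a hyperbolic fixed point of $\ffR_{k,\mathbf{t}}$. Pick an $(m_s+n)$-disk $\mathcal{D}\subset\mathcal{M}_{k,\mathbf{t}}$ around $R_{k,\mathbf{t}}$, transverse inside $\mathcal{M}_{k,\mathbf{t}}$ to $W^s_{\mathrm{loc}}(R_{k,\mathbf{t}})$, and let $\mathcal{D}_j$ be the connected component of $\ffR_{k,\mathbf{t}}^{\,j}(\mathcal{D})\cap\widetilde{\Pi}^+_{k,\mathbf{t}}$ containing $R_{k,\mathbf{t}}$. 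The $D\ffR_{k,\mathbf{t}}$-invariance of $\mathcal{C}^{cu}$ keeps every $\mathcal{D}_j$ tangent to $\mathcal{C}^{cu}$, and Lemma~\ref{lema1} together with $J_P(f_{\mathbf{t}})=J_P(f)>1$ (see~\eqref{e.constantleadingJ}) gives $\vol(\mathcal{D}_{j+1})>LJ_P^{\,k}\vol(\mathcal{D}_j)$ with $LJ_P^{\,k}>1$ for $k$ large. If all $\mathcal{D}_j$ stayed inside the bounded set $\widetilde{\Pi}^+_{k,\mathbf{t}}$, the volume would blow up, a contradiction. Hence there is a smallest $j_0\geqslant 1$ with $\ffR_{k,\mathbf{t}}(\mathcal{D}_{j_0-1})\not\subset\widetilde{\Pi}^+_{k,\mathbf{t}}$; this iterate must cross either $\partial^s$ or $\partial^u$ of $\widetilde{\Pi}^+_{k,\mathbf{t}}$.

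The $u$-boundary is excluded as follows. Since $\mathcal{D}_{j_0-1}\subset\widetilde{\Pi}^+_{k,\mathbf{t}}$, Claim~\ref{cl.inextremis} and Remark~\ref{r.variedades} give $\diam_u(\mathcal{D}_{j_0-1})<\gamma^{-k}\delta_{j_*}$, so Lemma~\ref{l.fromRussiawithconstants} and Remark~\ref{r.choicesdelta} yield
\[
\diam_c\bigl(\ffR_{k,\mathbf{t}}(\mathcal{D}_{j_0-1})\bigr)<K\gamma^k\cdot\gamma^{-k}\delta_{j_*}=K\delta_{j_*}<\rho/10,
\]
while $\diam_c(\widetilde{\Pi}^+_{k,\mathbf{t}})>\rho$; crossing $\partial^u$ would require spanning a $c$-extent comparable to $\rho$, which is impossible. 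Thus $\ffR_{k,\mathbf{t}}(\mathcal{D}_{j_0-1})$ crosses $\partial^s\widetilde{\Pi}^+_{k,\mathbf{t}}\subset W^s(P,f_{\mathbf{t}})$, and transversality is automatic because $W^s(P,f_{\mathbf{t}})$ is transverse to the cone $\mathcal{C}^{cu}$ along which $\ffR_{k,\mathbf{t}}(\mathcal{D}_{j_0-1})$ is tangent.

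Finally, in case (i) one has $\dim W^u(R_{k,\mathbf{t}})=\dim\mathcal{M}_{k,\mathbf{t}}$, so $W^u_{\mathrm{loc}}(R_{k,\mathbf{t}})$ is open in $\mathcal{M}_{k,\mathbf{t}}$ and $\mathcal{D}$ may be taken inside it, yielding the intersection at once. The harder part is case (ii), where $W^u(R_{k,\mathbf{t}})$ has codimension one in $\mathcal{M}_{k,\mathbf{t}}$ and Lemma~\ref{lema1} does not apply to it directly; this is the main technical hurdle. To overcome it, I apply the inclination ($\lambda$-)lemma inside $\mathcal{M}_{k,\mathbf{t}}$, using the transversality $\mathcal{D}\pitchfork W^s_{\mathrm{loc}}(R_{k,\mathbf{t}})$ and the hyperbolicity of $R_{k,\mathbf{t}}$ for $\ffR_{k,\mathbf{t}}|_{\mathcal{M}_{k,\mathbf{t}}}$, to conclude that $\ffR_{k,\mathbf{t}}^{\,j}(\mathcal{D})$ $C^1$-accumulates on $W^u(R_{k,\mathbf{t}})$. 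Persistence of transverse intersections under $C^1$-convergence then realises the transverse crossing of $\partial^s\widetilde{\Pi}^+_{k,\mathbf{t}}$ by a disk inside $W^u(R_{k,\mathbf{t}})$ itself, completing the proof.
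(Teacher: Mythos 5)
Your overall strategy is the paper's: force an iterate of a disk tangent to $\mathcal{C}^{cu}$ out of $\widetilde\Pi^+_{k,\bft}$ using the volume expansion of Lemma~\ref{lema1} (together with $J_P(f_\bft)=J_P(f)>1$), rule out an exit through $\partial^u\widetilde\Pi^+_{k,\bft}$ by combining Lemma~\ref{l.fromRussiawithconstants}, Claim~\ref{cl.inextremis}, and the choice $K\delta_{j_*}<\rho/10$ of Remark~\ref{r.choicesdelta} with the fact that the fixed point is $(\rho/10)$-centered (Remark~\ref{r.intersection}), and conclude that the exit is through $\partial^s\widetilde\Pi^+_{k,\bft}\subset W^s(P,f_\bft)$. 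In case (i), and for $S_{k,\bft}$ in case (ii), you take the disk inside the unstable manifold itself so that the crossing is realised by $W^u$; this is exactly the paper's Claim~\ref{cl.hoy}, where the iterated disk is chosen in $W^u(R_{k,\bft})\cap\mathcal{M}_{k,\bft}$ and invariance of $W^u$ does the rest.

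Your treatment of $R_{k,\bft}$ in case (ii) has a genuine gap. You iterate a full-dimensional disk $\mathcal{D}\subset\mathcal{M}_{k,\bft}$, obtain a transverse crossing of $\partial^s\widetilde\Pi^+_{k,\bft}$ by some $\ffR_{k,\bft}^{\,j_0}(\mathcal{D})$, and then invoke ``persistence of transverse intersections under $C^1$-convergence'' together with the inclination lemma to transfer the crossing to $W^u(R_{k,\bft})$. The implication goes the wrong way: if $D_j\to D_\infty$ in $C^1$ and each $D_j$ meets $N$ transversely, it does not follow that $D_\infty$ meets $N$. The intersection points may drift off every compact piece of $W^u(R_{k,\bft})$ on which the inclination lemma gives convergence, so in the limit you only get a point of $\overline{W^u(R_{k,\bft})}$, not of $W^u(R_{k,\bft})$ itself. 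Transversality and nonemptiness of intersections pass from the limit to the approximating disks, not conversely. (The paper's Claim~\ref{cl.hoy} sidesteps this by taking the disk of dimension $m_s+1$ inside $W^u(R_{k,\bft})$, which literally only makes sense when the $u$-index of $R_{k,\bft}$ equals $m_s+1$, so the same issue is glossed over there for $R_{k,\bft}$ with $m_s=2$.) The correct repair is the one your proposal implicitly rejects: run the expansion argument directly on an $(n+1)$-dimensional disk of $W^u(R_{k,\bft})\cap\mathcal{M}_{k,\bft}$, using a lower-dimensional analogue of Lemma~\ref{lema1} for $(n+1)$-disks tangent to $\mathcal{C}^{cu}$ that are uniformly transverse to the leading stable plane; the relevant rate is $(\lambda\gamma)^k$, and $\lambda\gamma>1$ because $\lambda^2\gamma>1$ and $\lambda<1$. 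With that estimate the argument for $R_{k,\bft}$ is word for word the one you give in case (i), and invariance of $W^u(R_{k,\bft})$ makes the crossing of $\partial^s\widetilde\Pi^+_{k,\bft}$ a crossing by the unstable manifold itself.
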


\begin{proof}
Note that $n=n_u=1$.
Fix large $k$ and $\bft \in \Delta_k$. Let $\mathcal{S}$
 be an $(m_s+1)$-dimensional submanifold contained in  $\widetilde \Pi^+_{k,\mathbf{t}}$ tangent
to the cone field $\mathcal{C}^{cu}$. This manifold is transverse to the
foliation $\widetilde{\mathcal{F}}^{ss}$ in
Remark~\ref{r.cgd}
 and the angle between them is uniformly bounded.
 Recall the linearising neighbourhood $W$ of $f$ and the linearising coordinates
 $(u,x,y)$, see~\eqref{e.linearisingcoor}.
 Consider the projection
  $\pi^{ss}\colon W \to \{ u=0 \}\subset W$
along the leaves of $\widetilde{\mathcal{F}}^{ss}$. By the $C^2$
regularity of $f$, the foliation $\widetilde{\mathcal{F}}^{ss}$ is
absolutely continuous.
 Thus
there are constants $c_1,c_2>0$ (independent of the submanifold
$\mathcal{S}$) such that
\begin{equation}\label{e.ok}
c_1\, \mathrm{vol}(\mathcal{S})
<\mathrm{vol}(\pi^{ss}(\mathcal{S})) < c_2 \,
\mathrm{vol}(\mathcal{S})
\end{equation}
where $\mathrm{vol}(\cdot)$ denotes $(m_s+1)$-volume.

Recall that the unfolding family $(f_{\bft})$ preserves the leading Jacobian of $P$.
Using Lemma~\ref{lema1} and Remark~\ref{r.uniform}, we get a constant $L>0$
 such that for every large
$k\geqslant 1$ it holds
\begin{equation}\label{e.ko}
\mathrm{vol}(\ffR_{k,\bft}(\mathcal{S})) > L\, J_{P}^k \,
\mathrm{vol}(\mathcal{S}).
\end{equation}
Thus, from~\eqref{e.ok} and~\eqref{e.ko}, it hold that
for
every $k$ large enough
\begin{equation}
\mathrm{vol}(\pi^{ss} (\ffR_{k,\bft}(\mathcal{S})))>c_1\,
c_2^{-1}\, L\, J_{P}^k\, \mathrm{vol}(\pi^{ss} (\mathcal{S}))=
\varrho_k \, \mathrm{vol}(\pi^{ss} (\mathcal{S}))
\end{equation}
where
$
\varrho_k \eqdef  c_1\, c_2^{-1}\, L\, J_{P}^k>1.
$
As $ J_{P}>1$ we get that $\varrho_k>1$ for every large $k$.
Arguing inductively, if $\ffR_{k,\bft}^{i}(\mathcal{S})\subset
\widetilde\Pi^+_{k,\bft}$ for every  $i=0,\dots,n-1$
 we get
\begin{equation}\label{e.5}
\mathrm{vol} (\pi^{ss}(\ffR^n_{k,\bft}(\cS))) > \varrho_k^n\,
\mathrm{vol}(\pi^{ss}(\mathcal{\cS})).
\end{equation}

We now focus on the saddles $R_{k,\bft}$. We have the following
facts about the sets $\widetilde \Pi_{k,\bft}^+$. First recall
Claim~\ref{cl.inextremis} about the ``unstable size" of the
$\widetilde\Pi_{k}^+$ and the definitions of $\partial^u
\widetilde \Pi_{k}^+$ and $\diam_{c}$, in~\eqref{e.bordes}
and~\eqref{e.diameter}, respectively.

\begin{remark}\label{r.intersection}. Let $\rho>0$ be the  constant in
Claim~\ref{cl.inextremis}.
Adjusting the ``unstable sides'' of  $\widetilde \Pi^+_{k,\bft}$, we
can assume that the saddle $R_{k,\bft}$ is
$(\frac{\rho}{10})$-\emph{centered} for every $k$ sufficiently
large:  for every submanifold
$\mathcal{S}$ of dimension $m_s+1$ tangent to $\mathcal{C}^{cu}$,
 containing $R_{k,\bft}$, and intersecting
 $\partial^u \widetilde \Pi_{k,\bft}^+$
 it holds
  $$
\mathrm{diam}_c (\mathcal{S}) > \frac{\rho}{10}.
$$
\end{remark}

The lemma (for $R_{k,\bft}$) follows immediately
from the next claim:

\begin{claim}\label{cl.hoy}
Consider a small disk $W^u\subset W^u(R_{k,\bft}, f_{\bft}) \cap
\mathcal{M}_{k,\bft}$ of dimension $(m_s+1)$ centered at
$R_{k,\bft}$.
Then there is $m_0$ such that
$\ffR_{k,\bft}^{m_0} (W^u)\pitchfork \partial^s \widetilde
\Pi^+_{k,\bft}\not = \emptyset.$
\end{claim}

\begin{proof}
By Equation  \eqref{e.5}, there is a first $n_0$ such that
$\ffR_{k,\bft}^{n_0+1} (W^u)\cap \partial
\widetilde\Pi^+_{k,\bft}\not=\emptyset$. If this intersection occurs in
$\partial^s \widetilde \Pi^+_{k,\bft}$ taking $m_0=n_0+1$ we are done.
Otherwise, the intersection occurs in $\partial^u \widetilde
\Pi^+_{k,\bft}$. We see that  this gives a contradiction.
Define
$$
W_0\eqdef C(R_{k,\bft}, \ffR_{k,\bft}^{n_0} (W^u)\cap \widetilde
\Pi^+_{k,\bft}), \quad W_1\eqdef \ffR_{k,\bft} (W_0),
$$
where
$C(A, \Upsilon)$ denotes the connected component of the set $\Upsilon$ containing the
point $A\in \Upsilon$.
Since $W_1$ intersects
$\partial^u \widetilde \Pi^+_{k,\bft}$,
$R_{k,\bft}\in W_1$ (recall that the saddle is single round),
 and $R_{k,\bft}$ is
$(\frac{\rho}{10})$-centered, Remark~\ref{r.intersection} implies
that
$$
\diam_{c} (W_1)> \frac{\rho}{10}.
$$
On the other hand, since $W_0$ is contained in $\widetilde
\Pi^+_{k,\bft}$, by item (b) in Claim~\ref{cl.inextremis} and
recalling~\eqref{e.choicesdelta}, it follows
$$
\diam_{u} (W_0)< \gamma^{-k}\, \delta_{j_*}.
$$
Using Lemma~\ref{l.fromRussiawithconstants} and Remark~\ref{r.uniform}, the previous
inequalities, and the choice of quantifiers in   Remark~\ref{r.choicesdelta}
we get
$$
\frac{\rho}{10}<\diam_c (W_1) =\diam_c (\ffR_{k,\bft}(W_0) ) < K
\, \gamma^k \, \diam_{u} (W_0)< K\, \delta_{j*} < \frac{\rho}{10}.
$$
This contradiction implies the claim.
\end{proof}
The proof of Lemma~\ref{l.sebastopol} for the saddle $R_{k,\bft}$
is now complete. The proof of the lemma for the saddle  $S_{k,
\bft}$ follows similarly and will be omitted.
\end{proof}

\section{Proof of  Theorems~\ref{thm:main} and \ref{thm:main2}}
\label{s.proofofmaintheorems}

Consider $f\in \mathrm{Diff}^r(M^{m+n})$
with a homoclinic tangency associated to a  saddle $P$ of
$u$-index $n$ and a blender $\Gamma$ as in Theorems~\ref{thm:main} or \ref{thm:main2}.  This means that
\begin{itemize}
\item  $P$ is simple of type $(m_s,n_u)$, with $m_s, n_u\in \{1,2\}$,
\item $J_P(f) >1$, and
\item
$P$ is homoclinically related to a blender $\Gamma$.
\end{itemize}
In Theorem~\ref{thm:main} we have that $n=n_u=1$ and that $\Gamma$ is a $cs$-blender of central dimension
$m_s$. In Theorem~\ref{thm:main2} we have that $\Gamma$ is a double blender of central dimensions
$(m_s,n_u)$.

Recalling Terminology~\ref{notation1}, from now on we use the following nomenclature:

\begin{terminology}[$C^r$ perturbation] A $C^r$ perturbation of $f$
is a $C^\infty$ diffeomorphism $g$ that can be obtained
arbitrarily $C^r$ close to $f$.
\end{terminology}

\subsection{Proof of Theorem~\ref{thm:main}}
\label{ss.proofoftheoremA}
The main step to prove Theorem~\ref{thm:main} is the following proposition which summarises the previous constructions.

\begin{prop} \label{p.l.n0bis}
Consider $f$ with a saddle $P$ and a $cs$-blender $\Gamma$ as in Theorem~\ref{thm:main}. There is a
{$C^r$}
perturbation ${g}$ of $f$, $r\geqslant 1$, having a  saddle $Q$
such that
\begin{itemize}
\item[(a)]
$Q$ is simple  of type $(m_s,1)$ and $J_{Q} (g)>1$,
\item[(b)]
$Q$ is homoclinically related to the blender $\Gamma_{g}$ and
to $P_{g}$,
\item[(c)] $Q$ is biaccumulated by its transverse homoclinic points,
\item[(d)] $Q$ has a homoclinic tangency in the superposition
domain of $\Gamma_g$, and
 \item[(e)]
the superposition region of the blender $\Gamma_{{g}}$ contains a
local strong stable manifold $W^{ss}_{\mathrm{loc}}(Q,
{g})$.
 \end{itemize}
\end{prop}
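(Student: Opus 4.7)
The plan is to apply Proposition~\ref{p.Q} with an auxiliary witness saddle placed inside the blender $\Gamma$, and then relocate the resulting tangency using the blender's homoclinic geometry.

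I would first fix a distinctive saddle $Q^\ast$ of $\Gamma$. Using the hypothesis that $P$ is homoclinically related to $\Gamma$ together with Remark~\ref{r.blender}(3), $P$ is homoclinically related to $Q^\ast$. Since having simple spectrum (real, positive, with simple multiplicities) is a $C^r$-generic property among the saddles of a homoclinic class, an arbitrarily small preliminary perturbation supported away from the orbits of $P$ and its tangency yields a saddle $P'$ homoclinically related to both $P$ and $Q^\ast$, with simple spectrum, and lying as close to $Q^\ast$ as desired. Let $V$ be a small neighbourhood of $P'$.

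Next I would invoke Proposition~\ref{p.Q} applied to the pair $(P,P')$ and the neighbourhood $V$. The conclusion provides a $C^r$ perturbation $g_1$ of $f$ and a saddle $Q\in V$ which is simple of type $(m_s,1)$ and satisfies $\log J_P(f)\cdot \log J_{Q}(g_1)>0$; since $J_P(f)>1$ this gives $J_{Q}(g_1)>1$. The same proposition ensures that $Q$ is homoclinically related to $P_{g_1}$ and $P'_{g_1}$, and hence, through $P'_{g_1}\in \Gamma_{g_1}$ and Remark~\ref{r.blender}(3), to the entire blender $\Gamma_{g_1}$. Because $n=1$, item (3) of the proposition further guarantees that $Q$ is biaccumulated by its transverse homoclinic points, and the last assertion of the proposition preserves the homoclinic tangency of $P$ at $P_{g_1}$. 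This establishes items (a), (b) and (c).

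For item (d), I would apply Remark~\ref{r.tangenciesandcontinuations} to the homoclinically related pair $Q$ and $P_{g_1}$: a further arbitrarily small $C^r$ perturbation $g_2$, supported in a small neighbourhood of a transverse heteroclinic orbit between $Q$ and $P_{g_1}$, yields a homoclinic tangency associated to $Q$. To place this tangency inside the superposition domain $B^{ss}$ of $\Gamma_{g_2}$ I would exploit the homoclinic relation between $Q$ and $Q^\ast_{g_2}$: fundamental domains of $W^u(Q,g_2)$ accumulate on $W^u(Q^\ast_{g_2},g_2)$, which meets the open set $B^{ss}$ by Definition~\ref{def:blender}(c) applied to the disk $W^{ss}_{\mathrm{loc}}(Q^\ast_{g_2})\in \mathscr{D}^{ss}$, and symmetrically $W^s(Q,g_2)$ accumulates on $W^s(Q^\ast_{g_2},g_2)$. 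A final arbitrarily small $C^r$ push along such a heteroclinic orbit slides the tangency point into $B^{ss}$; the resulting diffeomorphism is the required $g$. For item (e), shrinking $V$ at the very first step makes $Q$ arbitrarily $C^0$-close to $Q^\ast$, while the strong stable bundle $E^{ss}$ of $\Gamma_g$ varies continuously with the base point and with $g$. Since $W^{ss}_{\mathrm{loc}}(Q^\ast_g,g)$ contains $Q^\ast_g$ in its interior and belongs to the $C^1$-open family $\mathscr{D}^{ss}$, the $C^1$-nearby disk $W^{ss}_{\mathrm{loc}}(Q,g)$ also lies in $\mathscr{D}^{ss}$.

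The main obstacle I anticipate is step (d): positioning the tangency precisely inside $B^{ss}$ while simultaneously preserving the simple type of $Q$, the Jacobian condition $J_{Q}(g)>1$, the biaccumulation, the homoclinic relations with $\Gamma_g$, and the location of $W^{ss}_{\mathrm{loc}}(Q,g)$. All those features are either hyperbolic/open conditions or persist under small perturbations supported near heteroclinic connections that are disjoint from the hyperbolic pieces carrying them, so the successive adjustments do close up consistently and produce the required diffeomorphism $g$.
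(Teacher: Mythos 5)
Your proposal is correct and follows essentially the same route as the paper: apply Proposition~\ref{p.Q} (via Lemma~\ref{l.nonreal}) with the auxiliary simple-spectrum saddle $P'$ placed in a small neighbourhood $V$ of a distinctive saddle $Q^\ast$ inside the superposition domain, read off (a)--(c) from its conclusions and Remark~\ref{r.longmanifolds}, and get (d)--(e) from openness of $\mathscr{D}^{ss}$ and continuity of $W^{ss}_{\mathrm{loc}}$. The only (harmless) deviation is that you transfer the tangency to $Q$ afterwards via Remark~\ref{r.tangenciesandcontinuations} and an extra push, whereas the paper obtains it directly from the final clause of Proposition~\ref{p.Q} together with Remark~\ref{r.preservation}, the location in $B^{ss}$ then being automatic since the tangency orbit accumulates on $Q\in B^{ss}$.
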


\begin{proof}
The proof of the proposition follows from Lemma~\ref{l.nonreal} taking
$P$ and $P'=Q^\ast$ a distinctive point of  the blender $\Gamma$ (which are
homoclinically related to $P$) and a small  neighbourhood $V$ of
$Q^\ast$ contained in the superposition domain of $\Gamma$.
Let $g$ and $Q$ be the diffeomorphism and the saddle
provided by Lemma~\ref{l.nonreal}. Note that the saddle
$Q$ satisfies
(a)--(b) and its orbit intersects $V$ (thus, after replacing by some iterate, we can assume that
$Q\in V$).
Since we have $n=1$, Remark~\ref{r.longmanifolds}
implies the biaccumulation property in (c).
Moreover, by Remark~\ref{r.preservation}, the saddle  $Q$ can be taken with a homoclinic tangency, obtaining (d).
Finally, item (e) also holds
provided $V$ is sufficiently small (so that
$W^s_{\mathrm{loc}}(Q,g)$ and $W^s_{\mathrm{loc}}(Q^*_g,g)$ are close enough) and
the fact that the disks of the superposition domain form an open
set. This completes the proof of the proposition.
\end{proof}

We are now ready to conclude the othe proof of Theorem~\ref{thm:main}.
Without lost of generality, after a $C^r$ perturbation,
 we can assume that the diffeomorphism $g$, the saddle $Q$, and its homoclinic tangency provided by Proposition~\ref{p.l.n0bis}
satisfy
conditions
 {\textbf{(C0)}}--{\textbf{(C4)}}.
 Using Proposition~\ref{p.bis.eracl.2},  we embed $g$ into an unfolding family
 $(g_{\mathbf{t}})$ (with one parameter if $m_s=1$ and two
parameters if $m_s=2$), getting open sets of parameters
$\Delta_k\to \mathbf{0}$, submanifolds $\mathcal{M}^k_{\bft}$, and
saddles $R^k_{\mathbf{t}}$ of $u$-index two and $S^k_{\mathbf{t}}$ (if $m_s=2$) of
$u$-index three,
$\bft\in \Delta_k$, as in
Proposition~\ref{p.bis.eracl.2} and Remark~\ref{r.bis.eracl.2}. Fix $k$ large enough (so Lemma~\ref{l.sebastopol} is
satisfied) and $\bft \in \Delta_k$. We will show that there are $C^r$
robust heterodimensional cycles between each saddle $R^k_{\bft}$
and $S^k_{\bft}$  and the blender $\Gamma_{\bft}=\Gamma_{g_{\bft}}$, see
Lemmas~\ref{l.inter1} and ~\ref{l.inter2} below.

\begin{lem}[Robust transverse intersections]
\label{l.inter1}
Let $\bft\in \Delta_k$. The unstable manifolds
$W^u(R^{k}_{\mathbf{t}},  g_{\mathbf{t}})$ and
$W^u(S^{k}_{\mathbf{t}},  g_{\mathbf{t}})$ (if $m_s=2$) transversely intersect
the stable set
$W^s(\Gamma_{\mathbf{t}},  g_{\mathbf{t}})$. Therefore these intersections
are $C^r$ robust.
\end{lem}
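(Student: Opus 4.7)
The starting point will be Lemma~\ref{l.sebastopol}, applied to the diffeomorphism $g$ and saddle $Q$ from Proposition~\ref{p.l.n0bis} (in the roles of $f$ and $P$ of that lemma). For the unfolded family $(g_{\bft})$ this gives the transverse intersection
\[
W^u(R^{k}_{\bft}, g_{\bft}) \pitchfork W^s(Q_{g_{\bft}}, g_{\bft}) \neq \emptyset,
\]
and, when $m_s = 2$, also $W^u(S^{k}_{\bft}, g_{\bft}) \pitchfork W^s(Q_{g_{\bft}}, g_{\bft}) \neq \emptyset$. On the other hand, by item~(b) of Proposition~\ref{p.l.n0bis}, $Q$ is homoclinically related to the blender $\Gamma_{g}$, and this homoclinic relation persists for $g_{\bft}$ with $|\bft|$ small. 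Hence there exists a transverse homoclinic point
\[
Y \in W^u(Q_{g_{\bft}}, g_{\bft}) \pitchfork W^s(\Gamma_{g_{\bft}}, g_{\bft}).
\]

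The key step is to transfer the intersection with $W^s(Q_{g_{\bft}})$ to one with $W^s(\Gamma_{g_{\bft}})$ via the $\lambda$-lemma. Because $W^u(R^{k}_{\bft})$ has dimension $2$ while $W^s(Q_{g_{\bft}})$ has dimension $m$ in ambient dimension $m+1$, the transverse intersection above is $1$-dimensional, and at any point $X$ on it the subspace $T_X W^u(R^{k}_{\bft})\cap T_X W^s(Q_{g_{\bft}})$ is $1$-dimensional. I would then choose a $1$-dimensional subdisk $D_1 \subset W^u(R^{k}_{\bft})$ through $X$ whose tangent at $X$ avoids this intersection subspace, so that $D_1$ is transverse to $W^s(Q_{g_{\bft}})$ at $X$. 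Applying the standard inclination lemma at the periodic orbit of $Q_{g_{\bft}}$, the forward iterates $g^{n}_{\bft}(D_1)$ $C^1$-accumulate on the $1$-dimensional local unstable manifold $W^u_{\mathrm{loc}}(Q_{g_{\bft}})$. Since $Y$ is a transverse intersection of $W^u(Q_{g_{\bft}})$ with $W^s(\Gamma_{g_{\bft}})$, by $C^1$-openness of transversality, $g^{n}_{\bft}(D_1)$ transversely intersects $W^s(\Gamma_{g_{\bft}})$ at a point close to $Y$ for all sufficiently large $n$.

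Since $g^{n}_{\bft}(D_1) \subset W^u(R^{k}_{\bft}, g_{\bft})$ and $T g^{n}_{\bft}(D_1) \subset T W^u(R^{k}_{\bft}, g_{\bft})$, this transversality upgrades to $W^u(R^{k}_{\bft}, g_{\bft}) \pitchfork W^s(\Gamma_{g_{\bft}}, g_{\bft}) \neq \emptyset$. The argument for $W^u(S^{k}_{\bft}, g_{\bft})$ when $m_s = 2$ proceeds identically, now choosing a $1$-dimensional disk inside $W^u(S^{k}_{\bft})$ transverse to $W^s(Q_{g_{\bft}})$ (which is possible since the corresponding transverse intersection has positive dimension). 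Finally, the $C^r$ robustness is automatic: transverse intersections of the stable and unstable manifolds of the continuations of hyperbolic sets persist under any $C^1$ perturbation, and hence under $C^r$ perturbations for every $r \geq 1$.

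I do not expect a genuine obstacle. The only care needed is the explicit selection of the $1$-disk $D_1$, which is a routine dimension count exploiting that the $u$-index of $R^{k}_{\bft}$ (resp.\ $S^{k}_{\bft}$) strictly exceeds that of $Q$, combined with the classical Palis--de Melo inclination lemma applied along the orbit of $Q_{g_{\bft}}$.
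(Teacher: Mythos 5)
Your proposal is correct and follows essentially the same route as the paper: both deduce the transverse intersections $W^u(R^{k}_{\bft})\pitchfork W^s(Q_{\bft})\neq\emptyset$ (and similarly for $S^{k}_{\bft}$ when $m_s=2$) from Lemma~\ref{l.sebastopol}, and then transfer them to $W^s(\Gamma_{\bft})$ using the homoclinic relation between $Q$ and the blender via the inclination lemma. Your write-up merely makes explicit the dimension count and the choice of the one-dimensional subdisk that the paper leaves implicit in the phrase ``as $Q_{\bft}$ and the blender are homoclinically related, we get the corresponding intersections.''
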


\begin{proof}
As the
$u$-index of $Q$ is one ($n=n_u=1$) it   is either of type $(1,1)$ or $(2,1)$.
First,  if $Q$ is of type $(1,1)$ then,  by Lemma~\ref{l.sebastopol}, for every
$\bft\in \Delta_k$ it holds
$$
W^u (R^{k}_{\mathbf{t}},  g_{\mathbf{t}})\pitchfork W^s
(Q_{{\mathbf{t}}},  g_{\mathbf{t}}) \ne \emptyset.
$$
Second, if $Q$ is of type $(2,1)$,  again by Lemma~\ref{l.sebastopol},  then for every $\bft\in \Delta_k$
it holds
  $$
  W^u(R^{k}_{\mathbf{t}},  g_{\mathbf{t}})\pitchfork
  W^s(Q _{{\mathbf{t}}},  g_{\mathbf{t}})\ne\emptyset
  \quad \mbox{and} \quad
 W^u(S^{k}_{\mathbf{t}},  g_{\mathbf{t}})
 \pitchfork W^s(Q _{{\mathbf{t}}},  g_{\mathbf{t}})\ne\emptyset.
 $$
In both cases, as $Q _{{\mathbf{t}}}$ and the  blender
$\Gamma_{{\mathbf{t}}}$ are homoclinically related, we get the
corresponding intersections between the  unstable manifolds of the
saddles $R^k_{{\mathbf{t}}}$ and $S^k_{{\mathbf{t}}}$ and the
stable set of $\Gamma_{\mathbf{t}}$.
\end{proof}

\begin{lem}[Robust quasi-transverse intersections]
\label{l.inter2}
Let $\bft\in \Delta_k$.
There is a $C^r$ neighbourhood $\mathcal{U}$ of $g_{\mathbf{t}}$
such that for every $h\in \mathcal{U}$  the  manifolds
and $W^{ss}(R^k_h,h)$ and
$W^{s}(S^k_h,h)$
 (if $m_s=2$)
contain disks in the superposition region of the blender
$\Gamma_h$. As a consequence, for every $h\in \mathcal{U}$ it holds
$$
W^{s}(S^k_h,h) \cap W^u_{\mathrm{loc}} (\Gamma_h,h)
\ne \emptyset \quad \mbox{and} \quad
W^{ss}(R^k_h,h) \cap W^u_{\mathrm{loc}}(\Gamma_h,h)
\ne \emptyset.
$$
\end{lem}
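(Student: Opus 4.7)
The overall strategy is to show that the strong stable manifold of the saddle $R^k_{\mathbf{t}}$ (respectively $W^s(S^k_{\mathbf{t}}, g_{\mathbf{t}})$ when $m_s=2$) contains a disk that lies in the superposition region $\mathscr{D}^{ss}$ of the blender $\Gamma_{g_{\mathbf{t}}}$, and then invoke the defining intersection property of a $cs$-blender from Definition~\ref{def:blender}. The whole construction will be $C^r$-robust because $\mathscr{D}^{ss}$ is by definition $C^1$-open and because the blender, the saddles $R^k_h, S^k_h$, and their (strong) stable manifolds all depend continuously on $h$ in $C^1$.

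The starting point is Proposition~\ref{p.l.n0bis}(e), which provides a disk $D_Q \subset W^{ss}_{\mathrm{loc}}(Q, g)$ centered at $Q$ and belonging to $\mathscr{D}^{ss}$. Next, by the single-round nature of $R^k_{\mathbf{t}}$ (and $S^k_{\mathbf{t}}$), their orbits pass many times through the Sternberg-linearising neighborhood $W$ of $Q$. In particular, for any fixed $j$ one can follow $j$ linear iterates $g_{\mathbf{t}}^j(R^k_{\mathbf{t}})$ starting inside $\widetilde{\Pi}^+_{k,\mathbf{t}}$; since $\widetilde{\Pi}^+_{k,\mathbf{t}} \subset \Pi^+_{k,\mathbf{t}}$ collapses toward $W^s_{\mathrm{loc}}(Q,g)$ as $k \to \infty$ and $\mathbf{t} \in \Delta_k \to \mathbf{0}$, by choosing $j$ of order $k/2$ these iterates converge to $Q$. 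The key observation is then that $W^{ss}(R^k_{\mathbf{t}}, g_{\mathbf{t}})$ is a union of leaves of the extended strong stable foliation $\widetilde{\mathcal{F}}^{ss}$ of Remark~\ref{r.cgd}; the leaf through $g_{\mathbf{t}}^j(R^k_{\mathbf{t}})$ is a submanifold of $W^{ss}(R^k_{\mathbf{t}}, g_{\mathbf{t}})$, and since the foliation varies continuously and $g_{\mathbf{t}}^j(R^k_{\mathbf{t}}) \to Q$, this leaf converges in $C^1$ to the leaf of $\widetilde{\mathcal{F}}^{ss}$ through $Q$, which contains $D_Q$.

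Combining these facts, for $k$ large enough and $\mathbf{t} \in \Delta_k$ close to $\mathbf{0}$, the strong stable manifold $W^{ss}(R^k_{\mathbf{t}}, g_{\mathbf{t}})$ contains a $C^1$-embedded $d_{ss}$-dimensional disk that is arbitrarily $C^1$-close to $D_Q \in \mathscr{D}^{ss}$. By the $C^1$-openness of $\mathscr{D}^{ss}$, such a disk itself belongs to $\mathscr{D}^{ss}$. Since the saddles, their strong stable manifolds, and the blender (together with its superposition region) vary continuously in $C^1$, the same inclusion persists for every $h$ in some $C^r$ (in fact $C^1$) neighborhood $\mathcal{U}$ of $g_{\mathbf{t}}$. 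When $m_s = 2$, the argument for $S^k_h$ is identical, using that the $u$-index $n+2 = 3$ of $S^k_h$ forces its full stable bundle to have dimension $d_{ss} = m - 2$, so $W^s(S^k_h, h) = W^{ss}(S^k_h, h)$; the same strong stable leaf argument then applies. Finally, applying Definition~\ref{def:blender}(c,ii) to the continuation $\Gamma_h$ and to each of these disks yields the advertised intersections with $W^u_{U,\mathrm{loc}}(\Gamma_h, h)$.

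The hard part will be the $C^1$-approximation of the strong stable disks of $R^k_{\mathbf{t}}$ (and $S^k_{\mathbf{t}}$) by $D_Q$: it requires identifying, among the possibly higher-dimensional stable directions of $R^k_{\mathbf{t}}$ (whose leading stable multipliers are a nonreal pair when $m_s = 2$), the genuine strong stable bundle of dimension $d_{ss}$, and showing that the corresponding leaves really are subsets of $W^{ss}(R^k_{\mathbf{t}})$ and do limit on the leaf through $Q$. This hinges on the domination provided by the linearised dynamics at $Q$, on the fact that $\mathcal{M}^k_{\mathbf{t}}$ is normally hyperbolic and tangent to $\mathcal{C}^{cu}$, and on the continuous dependence of the extended foliation $\widetilde{\mathcal{F}}^{ss}$ on base points; together, these ingredients pin down the strong stable leaves in a $C^1$-controlled way near $Q$.
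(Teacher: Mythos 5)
Your proposal is correct and follows essentially the same route as the paper's proof: both rest on Proposition~\ref{p.l.n0bis}(e), the extended strong stable foliation $\widetilde{\mathcal{F}}^{ss}$ of Remark~\ref{r.cgd}, the fact that iterates of $R^k_{\bft}$ accumulate on $Q$, the $C^1$-openness of the superposition region, and finally the robust intersection property of the blender. Your explicit dimension count showing $W^{s}(S^k_h,h)=W^{ss}(S^k_h,h)$ is a useful elaboration of a step the paper dismisses as ``analogous,'' but it is not a different argument.
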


\begin{proof}
By Remark~\ref{r.setLambda}, the set $\Lambda_g$ consisting of the
orbits of
 $Q$ and the tangency point $Y$ is partially hyperbolic with a splitting of the form
 $E^{ss} \oplus E^{cu}$, where $\dim (E^{ss})=m- m_s$.
 Remark~\ref{r.cgd} provides the foliation $\widetilde{\mathcal{F}}^{ss}_g$
defined on
 a neighbourhood of $\Lambda_g$ and  tangent to the cone-field $\mathcal{C}^{ss}$.
 The leaves of this foliation have uniform size
(depending only  on  the neighbourhood of $g$)
 and
restricted to the set $\Lambda_g$ is invariant. This, in
particular, implies that
  $\widetilde{\mathcal{F}}^{ss}_{\mathrm{loc}}(Q)$
   is a local strong stable manifold
  $W^{ss}_{\mathrm{loc}}(Q,g)$ of $Q$.
Moreover,  by item (e) of  Proposition~\ref{p.l.n0bis}, the set
$W^{ss}_{\mathrm{loc}}(Q,g)$ is a disk in the region of
superposition $ \mathscr{D}^{ss}$ of the blender $\Gamma_g$.

Note that for $k$ large enough the orbit of $R^k_\bft$ can be
chosen with iterates arbitrarily close to the orbit of $Y$,  in particular, close to $Q$.
Thus, after replacing by some iterate, we can assume that $R^k_\bft$ is close enough to $Q$ so that
 its local strong stable manifold
$W^{ss}_{\mathrm{loc}}(R_\bft^k,g_\bft) =
\widetilde{\mathcal{F}}^{ss}_{\mathrm{loc}}(R^k_\bft)$  is close
to $W^{ss}_{\mathrm{loc}}(Q_g,g)$.  Since the superposition domain
 $B^{ss}$ of the blender $\Gamma_g$ is an
open set and the (local) strong stable manifold of $R_{\bft}^k$ varies $C^1$
continuously in a neighbourhood of $g_\bft$, we have that (provided that $\mathcal{U}$ is small)
$W^{ss}_{\mathrm{loc}}(R_h^k,h)$ is also a disk in $\mathscr{D}^{ss}$.
By item  (1) in
Remark~\ref{r.blender} about
continuations of
blenders, the superposition domain  ${B}^{ss}$
is also a
superposition domain of the continuation $\Gamma_h$ of $\Gamma_{\bft}$.
The robust intersection property of blenders (item (c) (ii) in Definition~\ref{def:blender})
implies that
$W^{ss}_{\mathrm{loc}}(R^k_h,h) \cap W^u_{\mathrm{loc}}(\Gamma_h,h)
\ne \emptyset$, proving the lemma for the saddle $R_h^k$. The proof
for the saddle $S_\bft^k$ is analogous and omitted.
\end{proof}

Note that if $m_2=2$, we get robust cycles of
coindex one (associated to $R^k_h$ and $\Gamma_h$) and two
 (associated to $S^k_h$ and $\Gamma_h$).
This ends the proof of Theorem~\ref{thm:main}. \hfill $\qed$

\subsection{Proof of Theorem~\ref{thm:main2}}\label{ss.proofoftheoremB}

The proof of Theorem~\ref{thm:main2} is similar to the one of
 Theorem~\ref{thm:main}.
The robust quasi-transverse intersections are obtained exactly as in Theorem~\ref{thm:main} (see Lemma~\ref{l.inter22}). However, since the biaccumulation property is not available in the general case, the proof of the existence of transverse intersections must be different.  For that, we use the geometrical properties of double blenders (see Lemma~\ref{l.inter11}).

The main step to prove Theorem~\ref{thm:main2} is the following proposition
(analogous to Proposition~\ref{p.l.n0bis}) summarising the constructions in previous sections when $n>1$.

\begin{prop}
\label{p.l.n0bis2}
Consider $f$ with a saddle $P$ and a double blender $\Gamma$ as in Theorem~\ref{thm:main2}.
There is a
{$C^r$}
perturbation ${g}$ of $f$, $r\geqslant 1$, having a  saddle $Q$
such that
\begin{itemize}
\item[(a)]
$Q$ is simple  of type $(m_s,n_u)$ and $J_{Q} (g)>1$,
\item[(b)]
$Q$ is homoclinically related to the double blender $\Gamma_{g}$ and
to $P_{g}$,
\item[(c)] $Q$ has a homoclinic tangency in the $ss$-superposition domain
of $\Gamma_g$, and
 \item[(d)]
the $uu$-superposition region of the double blender $\Gamma_{{g}}$ contains a
local strong unstable manifold of some point of the orbit of $Q$.
 \end{itemize}
\end{prop}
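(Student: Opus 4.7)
The plan is to mimic the proof of Proposition~\ref{p.l.n0bis} just above, but now adapted to a double blender, which carries two distinctive saddles $Q^\ast_{cs}$ and $Q^\ast_{cu}$ associated to its $cs$- and $cu$-blender structures. Both of these are elements of the transitive hyperbolic basic set $\Gamma$ and hence are homoclinically related to each other and, by hypothesis (2) of Theorem~\ref{thm:main2}, to $P$. After a $C^r$ perturbation I may additionally assume that each of them has simple spectrum, as is pointed out in the discussion following Proposition~\ref{p.Q}.

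First I would fix small neighbourhoods $V_{cs}\ni Q^\ast_{cs}$ and $V_{cu}\ni Q^\ast_{cu}$ contained respectively in the $ss$- and $uu$-superposition domains of $\Gamma$. Then I would apply the argument behind Lemma~\ref{l.nonreal} (the engine of Proposition~\ref{p.Q}) to produce, after a $C^r$ perturbation $g$, a simple saddle $Q$ of type $(m_s,n_u)$ homoclinically related to $P_g$, $Q^\ast_{cs,g}$ and $Q^\ast_{cu,g}$ (and hence to the whole blender $\Gamma_g$) and satisfying $\log J_P(f)\cdot\log J_Q(g)>0$, so that $J_Q(g)>1$ under the standing assumption $J_P(f)>1$, while preserving the homoclinic tangency of $P$ (Remark~\ref{r.preservation}). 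The one enhancement needed over the lemma as stated is to force the orbit of $Q$ to visit \emph{both} $V_{cs}$ and $V_{cu}$. This is done exactly as in Remark~\ref{r.preliminaryconst}, working inside a transitive hyperbolic set containing the orbits of $P$, $Q^\ast_{cs}$ and $Q^\ast_{cu}$, and prescribing a periodic itinerary that traverses both neighbourhoods while spending a large number of iterates near $P$. The long sojourn near $P$ simultaneously ensures $J_Q(g)>1$ by the dominated splitting argument and provides room to perform the two-parameter rotation trick of Claim~\ref{c.r.complex} in both leading directions, as in the last part of the proof of Lemma~\ref{l.nonreal}, in order to force the required nonreal leading multipliers on each side.

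Items (a) and (b) are then built into the construction. For item (c), by construction some iterate of $Q$ lies in $V_{cs}$, arbitrarily close to $Q^\ast_{cs}$; the inclination lemma then lets me transfer the tangency (which $Q$ inherits from $P_g$ via the homoclinic relation) to a new tangency of $Q$ whose tangency point lies inside the $ss$-superposition domain, in the same spirit as Remark~\ref{r.tangenciesandcontinuations}. For item (d), pick an iterate $Q'=g^j(Q)\in V_{cu}$; the local strong unstable manifold $W^{uu}_{\mathrm{loc}}(Q',g)$ is $C^1$-close to $W^{uu}_{\mathrm{loc}}(Q^\ast_{cu,g},g)$, and the latter is, by the definition of distinctive saddle in Definition~\ref{def:blender}, a disk of the $uu$-superposition region. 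Since the superposition region is an \emph{open} family of $C^1$ embeddings, shrinking $V_{cu}$ sufficiently places $W^{uu}_{\mathrm{loc}}(Q',g)$ in it as well.

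The main obstacle I anticipate is arranging conditions (c) and (d) simultaneously: the small perturbation used to relocate the tangency into the $ss$-superposition domain should not disrupt the strong unstable disk near $Q^\ast_{cu}$, and the construction leading to the nonreal leading multipliers must not interfere with either. The resolution is to schedule the $V_{cs}$-visit and the $V_{cu}$-visit at well-separated times in the itinerary of $Q$, with long stretches near $P$ in between; the local perturbations needed near $V_{cs}$ and near $V_{cu}$ can then be supported on disjoint flow boxes, so neither interferes with the other. Adding more iterates near $P$ is free in (a) because $J_P(f)>1$ is preserved along the unfolding (see \eqref{e.constantleadingJ}) and only improves the expanding leading Jacobian of $Q$.
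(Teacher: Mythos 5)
Your proposal is correct and follows essentially the same route as the paper: apply (a variant of) Lemma~\ref{l.nonreal} with $P'=Q^\ast_{cs}$, prescribe an itinerary for $Q$ visiting small neighbourhoods of both distinctive saddles inside the respective superposition domains, and then obtain (c)--(d) from the continuity of the local strong invariant manifolds together with Remark~\ref{r.preservation}. Your write-up merely spells out more explicitly the "slight variation" (the double visit and the two-parameter rotation) and the relocation of the tangency point, which the paper leaves implicit.
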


\begin{proof}
Recall that the double blender $\Gamma$ has a pair of
distinctive saddles $Q^\ast_{cs}$ and $Q^\ast_{cu}$ and
$ss$- and $uu$-superposition regions
($\mathscr{D}^{ss}$ and $\mathscr{D}^{uu}$)
 and  domains ($B^{ss}$ and $B^{uu}$), see Definition~\ref{def:blender}.

Let $U$ and $V$ be neighbourhoods of $Q^\ast_{cs}$ and $Q^\ast_{cu}$ such that $U\subset B^{ss}$ and $V\subset B^{uu}$. By hypothesis, $Q^\ast_{cu}$  is homoclinically related to $Q^\ast_{cs}$ and $P$.
Having this in mind, a slight variation of Lemma~\ref{l.nonreal} applied
$P$ and $P'=Q^\ast_{cs}$, we get a diffeomorphism $g$ with a saddle $Q$ satisfying
(a)--(b)
such that $Q\in U$ and  has some iterate $Q'\in V$.
Taking $V$ sufficiently small, the continuous dependence of the
 local invariant  manifolds (with respect to the point and the diffeomorphism) implies that
$W^{ss}(Q,g)$  contains a disk in $\mathscr{D}^{ss}$
and $W^{uu}(Q',g)$ contains a disk in $\mathscr{D}^{uu}$.
Finally, by Remark~\ref{r.preservation}, the saddle  $Q$ can be taken with a homoclinic tangency, obtaining (c)-(d).
This completes the proof of the proposition.
\end{proof}

We are now ready to complete the proof of Theorem B.
Without loss of generality, after a $C^r$ perturbation,
 we can assume that the saddle $Q$ and its homoclinic tangency provided by Proposition~\ref{p.l.n0bis2}
satisfy
conditions
 {\textbf{(C0)}}--{\textbf{(C5)}}.
 As in the proof of Theorem~\ref{thm:main}, we
 apply Proposition~\ref{p.bis.eracl.2} to $Q$ and $g$ to get an unfolding family
 $(g_{\mathbf{t}})$ and sets of parameters $\Delta_k\to \mathbf{0}$
such that for each $\mathbf{t}\in \Delta_k$
 \begin{itemize}
 \item [i)]
 if $m_s=1$ then there exists a saddle $R^k_{\mathbf{t}}$ of $u$-index $n+1$, and
 \item  [ii)]
 if $m_s=2$ then there exist two saddles $R^k_{\mathbf{t}}$ and $S^k_{\mathbf{t}}$ of $u$-indices $n+1$ and $n+2$, respectively.
 \end{itemize}

Now we are going to verify the existence of robust cycles stated in the theorem.

\begin{lem}[Robust transverse intersections]
\label{l.inter11}
Let $\bft\in \Delta_k$. The unstable manifolds
$W^u(R^{k}_{\mathbf{t}},  g_{\mathbf{t}})$ and
$W^u(S^{k}_{\mathbf{t}},  g_{\mathbf{t}})$ (if $m_s=2$) transversely intersect
the stable set
$W^s(\Gamma_{\mathbf{t}},  g_{\mathbf{t}})$. Therefore these intersections
are $C^r$ robust.
\end{lem}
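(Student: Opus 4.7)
The plan is to exploit the $cu$-blender part of the double blender $\Gamma$, which substitutes for the biaccumulation used in Lemma~\ref{l.inter1} and is unavailable here since $n\geqslant 2$. The key input is item (d) of Proposition~\ref{p.l.n0bis2}: there is a point $Q'$ in the $g$-orbit of $Q$ whose local strong unstable manifold $W^{uu}_{\mathrm{loc}}(Q',g)$ contains a disk in the $uu$-superposition region $\mathscr{D}^{uu}$ of $\Gamma$. Since $\mathscr{D}^{uu}$ is an open family of embedded disks and strong unstable manifolds depend $C^1$-continuously on their base point and on the diffeomorphism, this property persists for continuations in a $C^1$-neighbourhood of $g$.

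Next, I would verify that for every sufficiently large $k$ and every $\mathbf{t}\in\Delta_k$ the orbit of $R^k_{\mathbf{t}}$ visits an arbitrarily small neighbourhood of $Q'$. Indeed, $R^k_{\mathbf{t}}$ is of single-round type: its $g_{\mathbf{t}}$-orbit enters $\Pi^+$ once, is mapped by $\ffT_{1,\mathbf{t}}$ to $\Pi^-$, and then spends $k$ iterates of $\ffT_{0,\mathbf{t}}$ in the linearising neighbourhood $W$ of $Q$ before closing up. Hence, as $k\to\infty$ and $\mathbf{t}\to\mathbf{0}$ along $\Delta_k$, the orbit of $R^k_{\mathbf{t}}$ accumulates on $\mathcal{O}_g(Q)\cup\mathcal{O}_g(Y^\pm)$ in Hausdorff distance, so we can pick an iterate $X^k_{\mathbf{t}}=g_{\mathbf{t}}^{j(k,\mathbf{t})}(R^k_{\mathbf{t}})$ as close to $Q'$ as desired. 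By continuous dependence we then have $W^{uu}_{\mathrm{loc}}(X^k_h,h)\in\mathscr{D}^{uu}$ for every $h$ in a small $C^1$-neighbourhood $\mathcal{V}$ of $g_{\mathbf{t}}$. Applying the $cu$-blender property of $\Gamma_h$ then produces a non-empty robust intersection
\[
W^{uu}_{\mathrm{loc}}(X^k_h,h)\cap W^{s}_{U,\mathrm{loc}}(\Gamma_h,h)\subset W^{u}(R^k_h,h)\cap W^{s}(\Gamma_h,h),
\]
using that $X^k_h$ belongs to the orbit of $R^k_h$.

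The hard part is promoting this to a transverse intersection, since biaccumulation no longer supplies it directly. At a point $Z$ of the intersection, the $cu$-blender (through Remark~\ref{r.blender}(2) applied to $f^{-1}$) yields that $T_Z W^{uu}(X^k_h)+T_Z W^{s}(Y,h)$ has codimension at least $d_{cu}=n_u$ in $T_Z M$ for the saddle $Y\in\Gamma_h$ with $Z\in W^{s}_{\mathrm{loc}}(Y,h)$. Writing $T_Z W^{u}(R^k_h)=T_Z W^{uu}(X^k_h)\oplus F_Z$ with $\dim F_Z=n_u+1$, I would argue that $F_Z$ meets the center-stable bundle of $\Gamma_h$ at $Z$ in a $1$-dimensional subspace. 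This is the key geometric point: the saddle $R^k_{\mathbf{t}}$ is produced by the unfolding of the homoclinic tangency, so the extra unstable dimension of $R^k_h$ beyond the blender's natural unstable bundle is forced by construction into a leading stable direction of $Q_h$, which by proximity matches a leading stable direction of $\Gamma_h$. A dimension count then gives $\dim(T_Z W^{u}(R^k_h)+T_Z W^{s}(Y,h))=m+n$, and openness of transversality yields robustness on $\mathcal{V}$. The argument for $S^k_{\mathbf{t}}$ when $m_s=2$ is identical, with two extra unstable dimensions in $F_Z$ and the same cone/domination reasoning.
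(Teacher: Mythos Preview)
Your first half is exactly the paper's proof: use item~(d) of Proposition~\ref{p.l.n0bis2} and the fact that the single-round orbit of $R^k_{\mathbf{t}}$ (and $S^k_{\mathbf{t}}$) shadows $\mathcal{O}(Q)$ to find an iterate whose local strong unstable manifold lies in $\mathscr{D}^{uu}$; then the $cu$-blender property of $\Gamma_{\mathbf{t}}$ yields the robust intersection with $W^s(\Gamma_{\mathbf{t}},g_{\mathbf{t}})$.

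Where you diverge is in the treatment of transversality, and here you work much harder than necessary. The paper does not attempt to prove transversality at the specific intersection point produced by the blender. Instead it observes that robustness of the intersection $W^u(R^k_{\mathbf{t}})\cap W^s(\Gamma_{\mathbf{t}})\neq\emptyset$ is already secured by the blender's defining property (c)(ii), \emph{independently of transversality}. Transversality is then dealt with in one line: since $\dim W^u(R^k_{\mathbf{t}})+\dim W^s(\Gamma_{\mathbf{t}})=(n+1)+m>m+n$ (and $(n+2)+m$ for $S^k_{\mathbf{t}}$), the intersection can be made transverse after an arbitrarily small perturbation. Your tangent-space decomposition $T_ZW^u(R^k_h)=T_ZW^{uu}(X^k_h)\oplus F_Z$ and the claim that $F_Z$ meets the center-stable bundle in a one-dimensional space is plausible but not justified by what you wrote; making it rigorous would require tracking the cone fields through the unfolding construction. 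This work is unnecessary once you realise that the blender, not transversality, is the source of robustness here.
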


\begin{proof}
By construction, the orbits of
$R^k_{\mathbf{t}}$ and $S^k_{\mathbf{t}}$ follow  the orbit of $Q$, thus they intersect the open set $V$.
Thus, taking $V$ small enough, there are points
$\widehat{R}^k_{\mathbf{t}}\in \mathcal{O}(R^k_{\mathbf{t}})$ and $\widehat{S}^k_{\mathbf{t}}\in \mathcal{O}(S^k_{\mathbf{t}})$ such that
both
strong unstable manifolds $W^{uu}(\widehat{R}^k_{\mathbf{t}},g_{\mathbf{t}})$ and $W^{uu}(\widehat{S}^k_{\mathbf{t}},g_{\mathbf{t}})$ contain a disk in $\mathscr{D}^{uu}$. The
robust intersection property of the $cu$-blender $\Gamma_\mathbf{t}$,
see (c)(ii) in Definition~\ref{def:blender}, now implies that the intersections
$$
W^{u}(\widehat{R}^k_{\mathbf{t}},g_{\mathbf{t}})\cap
W^{s}(\Gamma_\mathbf{t},g_{\mathbf{t}})\neq \emptyset
\quad
\mbox{and}
\quad
W^{u}(\widehat{S}^k_{\mathbf{t}},g_{\mathbf{t}})
\cap
W^{s}(\Gamma_\mathbf{t},g_{\mathbf{t}})
\neq \emptyset
$$
are $C^r$-robust. Finally, note that since the $u$-indices of $\Gamma_{\mathbf{t}}$,
$R^k_{\mathbf{t}}$, and $S^k_{\mathbf{t}}$ (when $m_s=2$) are
  $n$, $n+1$, and $n+2$, respectively, the intersections above can be done transverse after a perturbation.
\end{proof}

\begin{lem}[Robust quasi-transverse intersections]
\label{l.inter22}
Let $\bft\in \Delta_k$.
There is a $C^r$ neighbourhood $\mathcal{U}$ of $g_{\mathbf{t}}$
such that for every $h\in \mathcal{U}$  the  manifolds
and $W^{ss}(R^k_h,h)$ and
$W^{s}(S^k_h,h)$
 (if $m_s=2$)
contain disks in the superposition region of the blender
$\Gamma_h$. As a consequence, for every $h\in \mathcal{U}$ it holds
$$
W^{s}(S^k_h,h) \cap W^u_{\mathrm{loc}} (\Gamma_h,h)
\ne \emptyset \quad \mbox{and} \quad
W^{ss}(R^k_h,h) \cap W^u_{\mathrm{loc}}(\Gamma_h,h)
\ne \emptyset.
$$
\end{lem}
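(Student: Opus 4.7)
The proof mirrors that of Lemma~\ref{l.inter2}, with Proposition~\ref{p.l.n0bis2} replacing Proposition~\ref{p.l.n0bis}. The plan is first to produce disks in the $ss$-superposition region from strong stable manifolds of suitable iterates of $R^k_\bft$ and $S^k_\bft$, and then to invoke the $C^1$-robust intersection property of the $cs$-blender component of the double blender $\Gamma_\bft$ and its continuations.

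First, by Remark~\ref{r.setLambda} applied under conditions {\bf{(C0)}}--{\bf{(C5)}}, the $g$-invariant set $\Lambda_g$ formed by the orbits of $Q$ and the tangency point $Y$ admits a dominated splitting whose strong stable bundle $E^{ss}$ has dimension $m-m_s=d_{ss}$. Remark~\ref{r.cgd} extends the strong stable foliation $\mathcal{F}^{ss}$ to a $C^r$ foliation $\widetilde{\mathcal{F}}^{ss}_g$ on a neighbourhood of $\Lambda_g$, with leaves of uniform size, tangent to the cone field $\mathcal{C}^{ss}$, and invariant over $\Lambda_g$. In particular $\widetilde{\mathcal{F}}^{ss}_{\mathrm{loc}}(Q)=W^{ss}_{\mathrm{loc}}(Q,g)$, and by item (c) of Proposition~\ref{p.l.n0bis2} together with the argument in its proof (which places $Q$ itself in the $ss$-superposition domain $B^{ss}$ and uses continuity of the local invariant manifolds), this local strong stable manifold is a disk belonging to the superposition region $\mathscr{D}^{ss}$ of the $cs$-blender component of $\Gamma_g$.

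Second, for large $k$ the single-round saddles $R^k_\bft$ and $S^k_\bft$ provided by Proposition~\ref{p.bis.eracl.2} spend most of their iterates near the orbit of $Q$. Replacing them with suitable iterates $\widehat R^k_\bft\in\mathcal{O}(R^k_\bft)$ and $\widehat S^k_\bft\in\mathcal{O}(S^k_\bft)$, we may assume they lie in an arbitrarily small neighbourhood of $Q$. By continuity of the leaves of $\widetilde{\mathcal{F}}^{ss}_g$, the disks
\[
\widetilde{\mathcal{F}}^{ss}_{\mathrm{loc}}(\widehat R^k_\bft)=W^{ss}_{\mathrm{loc}}(\widehat R^k_\bft,g_\bft),\qquad \widetilde{\mathcal{F}}^{ss}_{\mathrm{loc}}(\widehat S^k_\bft)\subset W^{s}_{\mathrm{loc}}(\widehat S^k_\bft,g_\bft)
\]
are $C^1$-close to $W^{ss}_{\mathrm{loc}}(Q,g)\in\mathscr{D}^{ss}$, and hence themselves belong to $\mathscr{D}^{ss}$ by openness. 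This property persists under $C^r$ perturbations: for every $h$ in a sufficiently small $C^r$ neighbourhood $\mathcal{U}$ of $g_\bft$, the continuations $W^{ss}_{\mathrm{loc}}(\widehat R^k_h,h)$ and $W^{s}_{\mathrm{loc}}(\widehat S^k_h,h)$ still contain disks in $\mathscr{D}^{ss}$. By item (1) of Remark~\ref{r.blender}, $\Gamma_h$ remains a $cs$-blender with the same reference domain and superposition region, so the robust intersection property of Definition~\ref{def:blender}(c)(ii) produces non-empty intersections with $W^{u}_{U,\mathrm{loc}}(\Gamma_h,h)$. Pulling back along the orbit gives the conclusion of the lemma.

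The only delicate point, and the one where a referee is likely to pause, is the dimensional matching in the application of the robust intersection property. For $S^k_\bft$ (which only arises when $m_s=2$) the full stable manifold has dimension $m+n-(n+2)=m-2=d_{ss}$, so the disk produced is of the correct dimension automatically. For $R^k_\bft$ (of $u$-index $n+1$) the stable manifold has dimension $m-1$ and one must use the strong stable sub-foliation $\widetilde{\mathcal{F}}^{ss}_g$ to extract a disk of dimension $d_{ss}$; this is precisely what the dominated splitting over $\Lambda_g$ together with the locally normally hyperbolic manifold $\mathcal{M}_{k,\bft}\supset W^u_{\mathrm{loc}}(R^k_\bft,g_\bft)$ of Proposition~\ref{p.bis.eracl.2} guarantees. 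Once this bookkeeping is checked, the robustness argument is identical to the codimension-one case treated in Lemma~\ref{l.inter2}.
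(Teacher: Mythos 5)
Your proposal is correct and follows essentially the same route as the paper: the paper's proof of this lemma is a one-line reduction to the argument of Lemma~\ref{l.inter2} (extract the strong stable foliation near the orbit of $Q$ via Remarks~\ref{r.setLambda} and~\ref{r.cgd}, note that $W^{ss}_{\mathrm{loc}}(Q,g)\in\mathscr{D}^{ss}$ by Proposition~\ref{p.l.n0bis2}, push this to nearby iterates of $R^k_\bft$ and $S^k_\bft$ by openness of the superposition region and $C^1$ continuity of the local strong stable manifolds, and conclude with Definition~\ref{def:blender}(c)(ii) and Remark~\ref{r.blender}(1)). Your extra dimensional bookkeeping for $W^{s}(S^k_h,h)$ versus $W^{ss}(R^k_h,h)$ is consistent with the statement and with the paper.
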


\begin{proof}
As in the proof of  Theorem A, the
strong stable manifolds $W^{ss}(R^k_{\mathbf{t}},g_{\mathbf{t}})$ (in both cases i) and ii)) and $W^{ss}(S^k_{\mathbf{t}},g_{\mathbf{t}})$ contain a disk in $\mathscr{D}^{ss}$. The
robust intersection property of the $cs$-blender $\Gamma_\mathbf{t}$
provides the  $C^r$-robust
 intersections
$$
W^{s}(R^k_{\mathbf{t}},g_{\mathbf{t}})\cap
W^{u}(\Gamma_\mathbf{t},g_{\mathbf{t}})\neq \emptyset
\quad
\mbox{and}
\quad
W^{s}(S^k_{\mathbf{t}},g_{\mathbf{t}})
\cap
W^{u}(\Gamma_\mathbf{t},g_{\mathbf{t}})
\neq \emptyset,
$$
proving the lemma.
\end{proof}

Summarising, the sets
$\mathcal{O}(R^k_{\mathbf{t}})$ and $\Gamma_\mathbf{t}$ form a $C^1$ robust heterodimensional cycles of coindex one and the sets $\mathcal{O}(S^k_{\mathbf{t}})$ and $\Gamma_\mathbf{t}$ form a $C^r$-robust heterodimensional cycles of coindex two.
The proof of Theorem B is now complete. \hfill $\qed$

\begin{remark}
When the effective dimension  is three, we can also obtain robust cycles of coindex one associated
to the continuation of the double blender  and saddles of $u$-index $n-1$.
\end{remark}

\section{Simultaneity of robust cycles: Simon-Asaoka's examples}
\label{s.simonasaokatan}
In this section, we use
Theorem~\ref{thm:main} to prove
Corollary~\ref{c.p.asaoka-simon},
claiming that
 the diffeomorphisms with robust
homoclinic tangencies
in~\cite{Sim:72b,Asa:08} also display $C^1$ robust
heterodimensional cycles. We start by introducing the
$cs$-blenders of
Plykin type.

\subsection{$cs$-blenders of Plykin type}
\label{ss.blendersplykin}
Consider a two-dimensional diffeomorphism $h$ with
a {\em{Plykin repeller}} $\Sigma$
inside  the unitary disk
$\mathbb{D}\subset \mathbb{R}^2$. For a detailed description of this repeller, see \cite[Chapter 8.6]{Rob:99}, for instance.
The key property of the  repeller $\Sigma$ is that the disk $\mathbb{D}$
is foliated by local unstable manifolds
$W^u_{\mathrm{loc}}(p, h)$
of points $p \in \Sigma$.  The repeller $\Sigma$ has a fixed point,
say the
origin $(0,0)$,
such that
\begin{equation}
\label{e.contracting}
Dh(0,0)
=\begin{pmatrix}\lambda & 0\\0 & \sigma \end{pmatrix},
\quad 0<\lambda<1<\sigma.
\end{equation}

Consider $U\eqdef (-1,1)^{n-2} \times \mathbb{D}$
and a local diffeomorphisms of the form
$$
\varphi \colon U \to  \mathbb{R}^{n-2}  \times \mathbb{D}
\quad  \varphi (x,y,z) = (\gamma x, h(y,z) )
$$
where
$$
 0 < \gamma <\inf_{(y,z)\in \mathbb{D}}\Vert Dh(y,z)\Vert.
 $$
Thus
$\Gamma\eqdef\
 \{0^{n-2}\} \times \Sigma$
 is a hyperbolic set of $\varphi$  and $P=(0^{n})\in \Gamma$ is a fixed point of  $\varphi$ of
 $u$-index one.
 Note also that $\Gamma$ is the maximal invariant set of $\varphi$ in $U$.
 See Figure~\ref{fig:Ply}.

 Given now any compact manifold $M^n$, $n\geqslant 3$,
 we can embed the local diffeomorphism $\varphi$ into a diffeomorphism $f_\varphi=f \in
 \mathrm{Diff}^r (M^n)$. With a slight abuse of notation, we denote by $\Gamma$ and $P$ the
 corresponding
 hyperbolic set and saddle of $f$.

\begin{figure}
\centering
\begin{overpic}[scale=0.04,
]{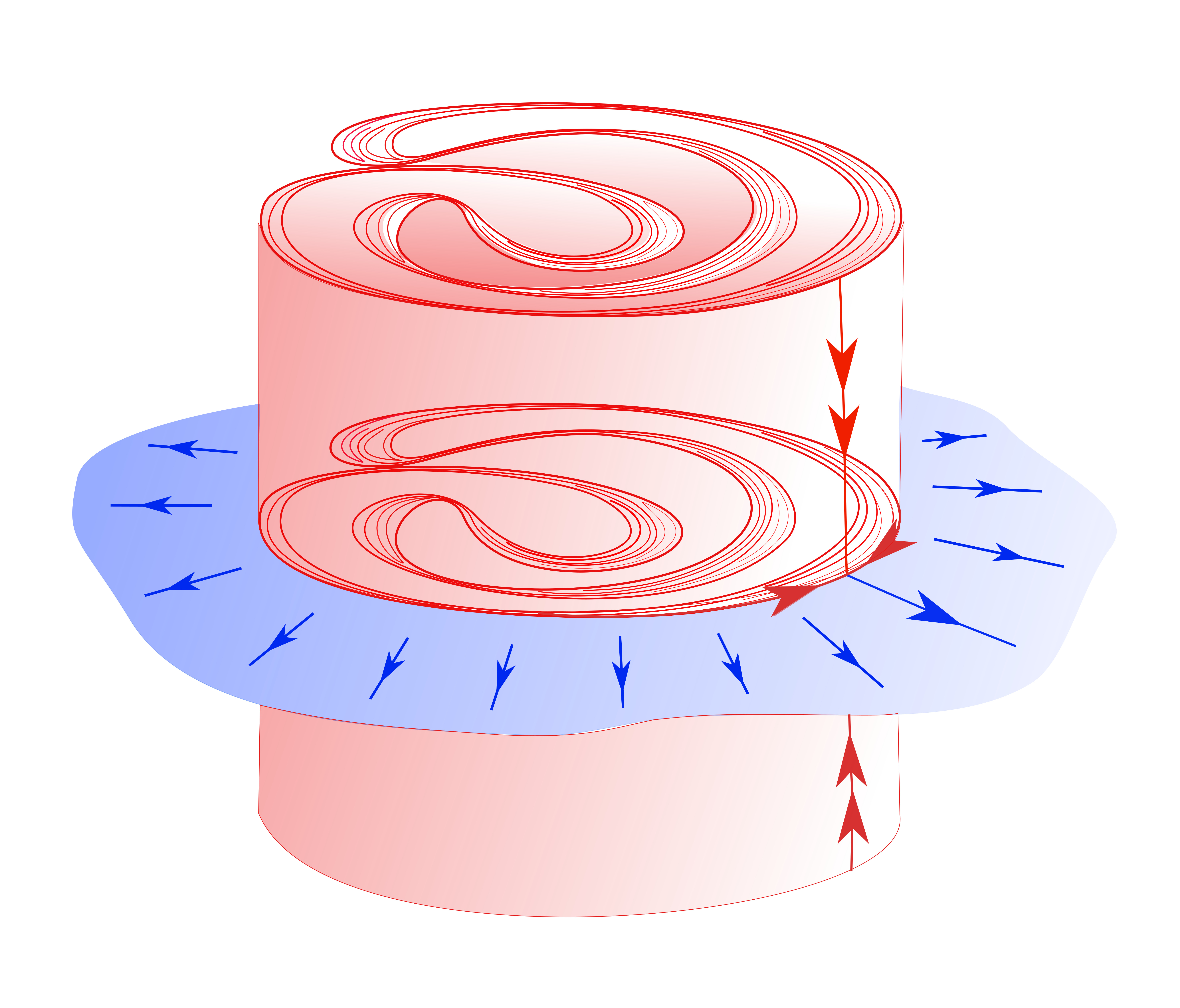}
  \put(192,87){\Large {$P$}}
 \end{overpic}
 \caption{\label{fig:Ply}A $cs$-blender of Plykin type }
\end{figure}

 \begin{remark} \label{r.foliated}
 By construction,
 the surface $\mathcal{S}\eqdef  \{0^{n-2}\} \times \mathbb{D}$ is normally hyperbolic.
Thus there is a $C^r$ neighbourhood $\mathscr{U}$ of $f$ such
 that for every $g\in \mathscr{U}$ there are well defined continuations $\Gamma_g$ and $\mathcal{S}_g$ of $\Gamma$ and $\mathcal{S}$, respectively, with
 $\Gamma_g \subset \mathcal{S}_g$. As $\mathbb{D} \subset \mathbb{R}^2$ is foliated by the (local) unstable manifolds
$W^u_{\mathbb{D}, \mathrm{loc}}(p, h)$, $p \in \Sigma$, by normal
hyperbolicity of $\mathcal{S}_g$, see~\cite{HirPugShu:77}, we have that
$\mathcal{S}_g$ is also foliated by local unstable manifolds
of $\Gamma_g$.
\end{remark}

Next lemma is just a reformulation of the discussion in \cite[Section 4.2]{Bon:11}.

  \begin{lem}
  \label{l.plykinblender}
  The set $\Gamma$ is a $cs$-blender.
  \end{lem}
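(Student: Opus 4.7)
The plan is to verify each clause of Definition~\ref{def:blender} directly from the product structure $\varphi(x,y,z) = (\gamma x, h(y,z))$ and the hyperbolic structure of the Plykin repeller $\Sigma$. Local maximality of $\Gamma$ in $U$, hyperbolicity, and transitivity are inherited from $\Sigma$. The dominated splitting is $E^{ss}\oplus E^c\oplus E^u$, where $E^{ss} = \mathbb{R}^{n-2}\times\{0^2\}$ is contracted uniformly by $\gamma$, and $E^c, E^u$ are the one-dimensional stable and unstable bundles of $\Sigma$ for $h$. The hypothesis $\gamma<\inf_{\mathbb{D}}\|Dh\|$ yields the domination $E^{ss}\prec E^c$, and hyperbolicity of $\Sigma$ gives $E^c\prec E^u$; in particular $d_{ss} = n-2$ and $d_{cs} = 1$. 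The distinguished saddle is $P = 0^n\in\Gamma$, whose strong-stable manifold contains the disk $(-1,1)^{n-2}\times\{(0,0)\}$ with $P$ in its interior.

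As superposition region $\mathscr{D}^{ss}$ I would take the $C^1$-open family of embeddings of an $(n-2)$-dimensional closed disk into $U$ whose images are tangent to a narrow cone around $E^{ss}$, project onto the $\mathbb{R}^{n-2}$-factor so as to contain $0^{n-2}$ in their interior, and whose preimage of $0^{n-2}$ is sent into a fixed open subset $\mathbb{D}_0\Subset\mathbb{D}$ containing $(0,0)$. The distinguished disk in $W^{ss}_{\mathrm{loc}}(P,f)$ clearly belongs to $\mathscr{D}^{ss}$. The core step is the robust intersection property~(c)(ii). First I would establish $W^u_{U,\mathrm{loc}}(\Gamma,f) = \{0^{n-2}\}\times\mathbb{D} = \mathcal{S}$: backward $\varphi$-iteration multiplies the $\mathbb{R}^{n-2}$-coordinate by $\gamma^{-1}>1$, forcing $x=0^{n-2}$ for orbits confined to $U$, while the $\mathbb{D}$-factor remains in $\mathbb{D}$ automatically under $h^{-1}$ since $\mathbb{D}$ is a trapping region for $h^{-1}$. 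By construction each $D\in\mathscr{D}^{ss}$ meets $\mathcal{S}$ transversally in a single interior point. For $g$ sufficiently $C^1$-close to $f$, Remark~\ref{r.foliated} provides a $g$-invariant continuation $\mathcal{S}_g$, $C^1$-close to $\mathcal{S}$ and foliated by unstable leaves of $\Gamma_g$; an analogous backward-trapping argument then shows $\mathcal{S}_g\cap U\subset W^u_{U,\mathrm{loc}}(\Gamma_g,g)$, and transversality of $D\cap\mathcal{S}_g$ persists, so the required nonempty intersection holds.

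The main obstacle is the inclusion $\mathcal{S}_g\cap U\subset W^u_{U,\mathrm{loc}}(\Gamma_g,g)$, which amounts to verifying that backward $g$-orbits of points of $\mathcal{S}_g\cap U$ stay inside $U$. Backward invariance of $\mathcal{S}_g$ is given by normal hyperbolicity; confinement in the $\mathbb{D}$-factor is the $C^1$-open persistence of the trapping property of $\mathbb{D}$ for $h^{-1}$; and confinement in the $\mathbb{R}^{n-2}$-factor follows from the uniform strong contraction of $g$ in $E^{ss}$. All three properties are $C^1$-open, so the conclusion holds in a full $C^1$-neighbourhood of $f$, completing the verification that $\Gamma$ is a $cs$-blender.
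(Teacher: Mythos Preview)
Your proof is correct and follows essentially the same approach as the paper: verify local maximality and the three-way dominated splitting from the product structure, take a $C^1$-open family of $(n-2)$-disks nearly parallel to $E^{ss}$, and obtain the robust intersection property from the transversality of these disks with the normally hyperbolic surface $\mathcal{S}=\{0^{n-2}\}\times\mathbb{D}=W^u_{U,\mathrm{loc}}(\Gamma,f)$ and its continuation $\mathcal{S}_g$. The only cosmetic differences are that the paper defines $\mathscr{D}^{ss}$ by a crossing condition (the disk meets $\partial V$ in two components) rather than your projection condition, and that the paper dispatches (c)(ii) in one line by invoking $\mathcal{S}\pitchfork D$ and the $C^1$-robustness of blenders, whereas you spell out explicitly why $\mathcal{S}_g\subset W^u_{U,\mathrm{loc}}(\Gamma_g,g)$ via backward trapping---your extra detail is sound and in fact clarifies a step the paper leaves implicit.
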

 We say the blender in Lemma~\ref{l.plykinblender} is of {\em{Plykin type.}}
   As a consequence of item (1) in Remark~\ref{r.blender}, for
  every $g$ sufficiently $C^1$ close to $f$,
the continuation $\Gamma_g$ of $\Gamma$ is also $cs$-blender (of Plykin type).

 \begin{proof}
Since having a $cs$-blender is a $C^1$ robust property, it is enough to prove the lemma for $f$.
Properties (a) of local maximality of $\Gamma$ and (b) partial hyperbolicity of $\Gamma$ follows considering
any neighbourhood
$V=(-\delta,\delta)^{n-2}\times \mathbb{D}$ (small $\delta>0$) of $\Gamma$
 and the (dominated) splitting $T_\Gamma M^n = E^{ss}\oplus E^{cs} \oplus E^{u}$, where
 $E^{ss}$ is $\mathbb{R}^{n-2}\times \{0^2\}$ and $T_\Sigma \mathbb{R}^2=E^{cs}\oplus E^{u}$ is the hyperbolic splitting of $\Sigma$.

To define the family of disks $\mathscr{D}^{ss}$
of the superposition region
 of $\Gamma$, consider
first a thin cone field
$\mathcal{C}^{ss}$  associated to the strong stable direction $E^{ss}$ of $\Gamma$.
 We extend $\mathcal{C}^{ss}$ to a small neighbourhood $W$ of $V$ (maybe after shrinking $\delta$) and, for  simplicity, denote this extension also by $\mathcal{C}^{ss}$. Recall that a $(n-2)$-dimensional disk
 $\Delta \subset V$  is tangent to the cone field $\mathcal{C}^{ss}$ if
$T_p \Delta \subset \mathcal{C}^{ss}(p)$ for every $p\in \Delta$.
The family  $\mathscr{D}^{ss}$
consists of all $(n-2)$-dimensional disks $\Delta$ containing in $W$ and
 tangent
to $\mathcal{C}^{ss}$
whose intersection with the boundary of $V$ has two connected components.
This defines an open family of
 disks.

Given any saddle $Q\in
\Gamma$, consider its local strong stable manifold  relative to $U$
$W^{ss}_{U, \mathrm{loc}}(Q,f)$.
This set contains a disk of $\mathscr{D}^{ss}$ containing $Q$ in
its interior. Thus, any periodic point of $\Gamma$ can be chosen as
a distinctive saddle of the blender, proving property (c-i) in the
definition of a $cs$-blender.

To get property (c-ii) about robust intersections, just observe
that
 $
 W^{u}_{U, \mathrm{loc}} (\Gamma,f) = \mathcal{S}
 $
 and that   for  every disk $D$ of  $\mathscr{D}^{ss}$ it holds $\mathcal{S} \pitchfork D\neq \emptyset$. The proof of the lemma is now complete.
  \end{proof}

 \subsection{Local deformations: generation of robust tangencies} \label{sss.robusttangencies}
   To generate a robust homoclinic tangency associated to the blender $\Gamma$ we argue as in \cite{Sim:72b,Asa:08}.
  Let us sketch the main arguments there. Recall that $P$ is the fixed point of $\Gamma$. Consider a small $(n-1)$-dimensional disk $\Delta\subset W^s (P,f)$ disjoint from
  the closure of  $V$, hence  disjoint from $\Gamma$. Through a smooth isotopy supported outside  the closure of $V$, we carry $\Delta$ close to $\Gamma$ in such a way that the following properties hold:
\begin{itemize}
\item[(i)]
$\mathcal{T}\eqdef \Delta\cap V$ is a  {\em{$ss$-tube}}:
$\mathcal{T}$ is a manifold diffeomorphic to $(-1,1)^{n-2} \times
\mathbb{S}^1$, foliated by disks  of $\mathscr{D}^{ss}$.
\item [(ii)] the transverse intersection between $\mathcal{T}$ and $\mathcal{S}$ is a simple closed curve $\alpha$,
\item [(iii)] the curve $\alpha$ (and thus the manifold $\mathcal{T}$) is tangent to $
   W^{\mathrm{u}}_{\mathrm{loc}}(P,f)$.
\end{itemize}
This  isotopy produces a new diffeomorphism whose restriction to $V$ coincides with $f$. Hence $\Gamma$ is a $cs$-blender of this new diffeomorphism. With a slight abuse of notation, we continue calling the resulting diffeomorphism by $f$.
We claim that for every $g$ sufficiently $C^r$ close  to $f$ the set $\Gamma_g$ has a homoclinic tangency and thus $\Gamma$ has a $C^r$ robust tangency.
 Indeed,  consider the continuations $\mathcal{T}_g$ of $\mathcal{T}$ and $\mathcal{S}_g$ of $\mathcal{S}$ for  $g$ close to $f$. Note that  $\mathcal{T}_g\subset W^s(P_g,g)$ is also a $ss$-tube  and    $\alpha_g=\mathcal{T}_g\pitchfork \mathcal{S}_g$ is  also a  simple closed  curve. On the other hand, as $\mathcal{S}_g=W^{u}_{U,\mathrm{loc}} (\Gamma_g,g)$,
 there is $P_0\in \Gamma_g$ such that $W^u_{\mathrm{loc}} (P_0,g)$ and $\alpha_g$ are tangent
(this is a consequence of Remark~\ref{r.foliated}).

We denote by $ \mathscr{T}$ the $C^r$ neighbourhood of $f$ obtained above.

\subsection{Proof of Corollary~\ref{c.p.asaoka-simon}}
The first item of the proposition was obtained above. The second item follows applying Theorem~\ref{thm:main} to $f$ and $P$.
Note  that $P$   is a simple saddle
of type $(1,1)$
with leading Jacobian $J_P (f) = \lambda\, \sigma>1$ and
 homoclinically related to $\Gamma$.  After shrinking $\mathscr{T}$, we can assume that these conditions hold for
 $P_g$ and $\Gamma_g$ for every $g\in \mathscr{T}$.
By the second part of Remark~\ref{r.tangenciesandcontinuations},
there is a dense subset
$\mathscr{D}$ of $\mathscr{T}$ of diffeomorphisms $g$ having a homoclinic tangency associated
to $P_g$. The diffeomorphisms in $\mathscr{D}$  satisfy the hypotheses of Theorem~\ref{thm:main}.
Thus for each $g\in \mathscr{D}$ there is an open set $\mathscr{C}_g$ whose closure contains $g$ consisting of diffeomorphisms with robust heterodimensional cycles as above.
Taking now
$$
\mathscr{C} \eqdef \bigcup_{g\in \mathscr{D}} \mathscr{C}_g.
$$
the corollary follows.
\hfill \qed

\section{Appendix:  Horseshoes with large entropy and blenders.\\
Proof of Theorem~\ref{mainthm-hyperbolic-measure}}
\label{s.appendix}

In this appendix, we explain how Theorem~\ref{mainthm-hyperbolic-measure} follows from
the results in \cite{AviCroWil:21}.

By the assumption, the hyperbolic measure $\mu$ is such that there is $\epsilon>0$ with
\begin{equation}
\label{e.byassumption}
 h_{\mu}(f) > -\log \mathrm{J}_{\mu}^{cs}(f)+ \frac{1}{2 r} \chi^{cs}_{\mu}(f) +
\epsilon.
\end{equation}
 Applying \cite[Theorem~B']{AviCroWil:21} to the
 hyperbolic measure  $\mu$, given any $\delta
>0$ we get  $C^1$ perturbation $h \in \mathrm{Diff}^r(M)$ of $f$
with an affine
horseshoe\footnote{A horseshoe $\Lambda$ is said
\emph{affine} if there are a neighbourhood $U$ of $\Lambda$ and
a chart $\varphi \colon U\to \mathbb{R}^{d}$ such that $\varphi\circ f
\circ \varphi^{-1}$ is locally affine in $U$. If the chart $\varphi$ can be chosen such that $D(\varphi\circ f
\circ \varphi^{-1})(x)$ is a matrix $A$ independent of $x$, we say that $\Lambda$ has \emph{constant linear part $A$}.}
$\Gamma_\delta$ whose hyperbolic splitting $E_\delta^s\oplus
E_\delta^u$
 satisfies
\begin{itemize}
\item
$\Gamma_\delta$ has a constant linear part  $A_\delta$ which is a
diagonal matrix with distinct real positive eigenvalues,
\item
The Lyapunov exponents of $\mu$ are $\delta$-close to the Lyapunov
exponents of $A_\delta$. In particular,
\begin{equation*}
\label{e.limitjacobians} \mathrm{Jac}_{E^s_\delta}(A_\delta) \to
\mathrm{J}^s_\mu \quad \mbox{and} \quad \chi^{cs}(A_\delta) \to
\chi^{cs}_\mu \quad \mbox{as $\delta \to 0$},
\end{equation*}
where $\mathrm{Jac}_{E_\delta^s}(A_\delta)\eqdef \det A_\delta|_{E_\delta^s}$ and
$\chi^{cs}(A_\delta)$ is the negative Lyapunov exponent of $A_\delta$ closest to
zero. From this convergence, taking $\delta>0$ small enough, we
have
\begin{equation}
\label{e.inparticular}
\begin{split}
-\log \mathrm{J}^{s}_{\mu}+ \frac{\epsilon - \delta}2  &> -\log
\mathrm{Jac}_{E_\delta^s}(A_\delta) \quad
\mbox{and}\\
 \frac{1}{2 r} \chi^{cs}_{\mu} + \frac{\epsilon - \delta}2
&>  \frac{1}{2 r} \chi^{cs}(A_\delta),
\end{split}
\end{equation}
\item
$\Gamma_\delta$ is $\delta$-close to the support of $\mu$ in Hausdorff
distance,
\item
$h_{\mathrm{top}}({\Gamma_\delta},{h_\delta})>h_{\mu}(f)-\delta$.
\end{itemize}

The last inequality implies that
\begin{equation}
\label{e.requeriment}
\begin{split}
h_{\mathrm{top}}({\Gamma_\delta},{h_\delta})&>h_{\mu}(f)-\delta \overset{\eqref{e.byassumption}}{\geqslant}
-\log \mathrm{J}^{s}_{\mu}+ \frac{1}{2 r} \chi^{cs}_{\mu} + \varepsilon - \delta  \\
& \overset{\eqref{e.inparticular}}{>} -\log \mathrm{Jac}_{E_\delta^s}(A_\delta)+ \frac{1}{2 r} \chi^{cs}(A_\delta).
\end{split}
\end{equation}

Observe that the inequality in \eqref{e.requeriment} allows us to
apply~\cite[Theorem~C]{AviCroWil:21} to the affine horseshoe $\Gamma_\delta$
of $h_\delta$, getting
 a perturbation $g_\delta$ of $h_\delta$ supported in a small
neighbourhood of $\Gamma_\delta$  such that the continuation $\Gamma_{g_\delta}$ of
$\Gamma_\delta$ is a $cs$-blender of central dimension $d_{cs}=m-1$ with a
dominated splitting whose bundles are one-dimensional.
Taking now a sequence
$(\delta_k)\to 0$, for each large $k$ we get diffeomorphisms $h_k=h_{\delta_k}$
and perturbations $g_k$ of $h_k$ with blenders $\Gamma_k$ as before.
These blenders satisfy
conditions \eqref{i.bl1}, \eqref{i.bl2}, and \eqref{i.bl3}
in the theorem.

To conclude the proof of the theorem, it remains to see that if the saddle $P$ is
 homoclinically related to $\mu$
then we can chose the perturbations $g_k$ such that the
continuations $P_k$ of $P$ and $\Gamma_k$ of $\Gamma$ are homoclinically
related. To see this, we need to review the steps in the proof of
\cite[Theorem~B']{AviCroWil:21}, which is a combination
of~\cite[Theorem~B]{AviCroWil:21} and Katok's approximation
theorem \cite{Kat:80} and its extensions in
\cite[Theorem.~3.3]{AviCroWil:21}
and~\cite[Theorem~2.12]{BuzCroOm:18}.

The first step is to apply Katok's approximation theorem to the
hyperbolic measure $\mu$ of $f$ to obtain a horseshoe
$\Lambda$ of $f$ such that:
\begin{enumerate}
\item[(a)] $\Lambda$ is homoclinically related to $\mu$,
\item[(b)]
$\Lambda$ is close to the support of $\mu$ in the Hausdorff
distance,
\item[(c)] the topological entropy of $\Lambda$ is close to $h_\mu(f)$,
\item[(d)] the $f$-invariant measures of $\Lambda$ are close to
$\mu$ in the weak* topology, and
\item[(e)] the Lyapunov exponents of any ergodic
measure of $\Lambda$
are close to the Lyapunov exponents of $\mu$.
\end{enumerate}
By hypothesis, the saddle $P$ is homoclinically related to $\mu$. Thus (a) and
the $\lambda$-lemma imply that
 $P$ and $\Lambda$ are homoclinically related.
We can assume that $P\not \in \Lambda$.
Otherwise, if $P\in \Lambda$, we extract a subhorseshoe
$\widetilde{\Lambda}$ of $\Lambda$ such that $P \not \in
\widetilde{\Lambda}$ and $h_{\mathrm{top}}(\widetilde \Lambda,f)$
is close to $h_{\mathrm{top}}(\Lambda,f)$. In particular,
 properties (a)--(e)   also hold for
$\tilde{\Lambda}$.

The second step in the proof of~\cite[Theorem~B']{AviCroWil:21} is to
apply~\cite[Theorem~B]{AviCroWil:21} to the horseshoe $\Lambda$
to get affine horseshoes with constant linear part. Since the perturbation in the latter result is done in a small
neighbourhood of $\Lambda$ and $P\not \in \Lambda$, we obtain a
$C^1$ perturbation $h \in \mathrm{Diff}^r(M)$ of $f$
with
$P_h=P$ and such that the continuation $\Lambda_h$ of $\Lambda$
contains an affine horseshoe $\Gamma_h$ with constant linear
part.  In particular, $P_h$ is
homoclinically related with~$\Gamma_h$.

Finally,  when $P$ has a homoclinic tangency, the constructions above can be done preserving that tangency. This concludes the proof of the theorem.

\section{Appendix: Dominated splittings}
\label{s.Sinovac} In this appendix, we review  the notion of a dominated splitting.

Given an invariant set $\Lambda$ of
$f \in \mathrm{Diff}^1(M)$, a splitting $T_\Lambda M= E\oplus F$ over
$\Lambda$ is called {\em{dominated}} if it is $Df$ invariant and
there is $\ell\in \mathbb{N}$ such that for every $x\in \Lambda$
and every pair of unit vectors $u \in E(x)$ and $v\in F(x)$ it
holds
$$
\frac{|| Df^\ell (x) (u)||}{|| Df^\ell (x) (v)||} < \frac{1}{2},
$$
here $E(x)$ and $F(x)$ are the bundles at $x$ and $|| \cdot||$ is
the norm. In this case, we say that $F$ {\em{dominates}} $E$.

A $Df$ invariant bundle $T_\Lambda M= E_1\oplus \cdots \oplus
E_k$ with $k$ bundles, $k \geqslant 3$, is {\em{dominated}} if the
bundles  $T_\Lambda M= (E_1\oplus \cdots \oplus E_i) \oplus (
E_{i+1} \oplus \cdots \oplus E_k)$ are dominated for every $i\in
\{1,\dots, k-1\}$.

The bundle $E$ over $\Lambda$ is {\em{uniformly contracting}} if
there are constants $C>0$ and $\kappa\in (0,1)$ such that
$$
|| Df^n (x) (u)|| \leqslant C \, \kappa^n || u||
$$
for every $x\in \Lambda$, $n \geqslant 0$, and every vector $u\in
E(x)$. Similarly, the bundle $F$ is   {\em{uniformly expanding}}
if it is uniformly contracting for $f^{-1}$. A dominated splitting
$E \oplus F$  over $\Lambda$ is {\em{partially hyperbolic}} if
either $E$ is uniformly contracting or $F$ is uniformly expanding.
When $E$ is uniformly contracting and $F$ is uniformly expanding
the splitting is {\em{hyperbolic}}. An invariant set $\Lambda$ is
({\em{partially}}) {\em{hyperbolic}}  if it
has a (partially) hyperbolic splitting.

The following proposition is a well-known result about dominated
splittings. We refer to \cite[Chapter B.2]{BonDiaVia:05} for
details (including the definitions of cone fields).

\begin{prop}
\label{prop:cone} Let  $f\in \mathrm{Diff}^1(M)$  and
$\Lambda\subset M$ be a compact $f$-invariant set with a
dominating splitting $ T_\Lambda M=E \oplus F.$ Then there are
neighbourhoods $U$ of $\Lambda$ in $M$ and $\mathscr{N}$ of $f$
in $\mathrm{Diff}^1(M)$ such that for every $g\in  \mathscr{N}$
the maximal invariant set $\Lambda_g$ of $g$ in $U$ has a
dominated splitting $ T_{\Lambda_g}=E_g \oplus F_g$ with $\dim
(E_g )= \dim (E)$ such that
\begin{enumerate}
\item
the bundles of the dominated splitting  depend continuously with
the point $x$ and the map $g$,
\item
there are continuous open cone fields $\mathcal{C}_E$ and
$\mathcal{C}_F$ defined on $U$ with $E(x) \subset
\mathcal{C}_E(x)$ and $F(x) \subset \mathcal{C}_F(x)$ such that
for every $g\in \mathscr{N}$ it holds:
\begin{itemize}
\item
$Dg^{-1} (\mathcal{C}_E(x)) \subset \mathcal{C}_E( g^{-1}(x))$ if
$x, g^{-1} (x) \in U$ and
\item
$Dg (\mathcal{C}_F(x)) \subset \mathcal{C}_F(g(x))$ if   $x, g
(x) \in U$.
\end{itemize}
\end{enumerate}
\end{prop}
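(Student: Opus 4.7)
The plan is to build the invariant cone fields first over $\Lambda$ using the assumed dominated splitting, then extend them continuously to a tubular neighbourhood, verify that the cone conditions are open in the $C^1$ topology, and finally recover the splitting $E_g\oplus F_g$ by intersecting forward/backward iterates of the cones.

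First I would fix the integer $\ell$ from the definition of domination and pass to the iterate $f^\ell$ (noting that a splitting dominated for $f^\ell$ is dominated for $f$). Choosing a continuous Riemannian metric, for each $x\in\Lambda$ I would define, for a small angle $\alpha>0$, the cone $\mathcal{C}_E(x)$ around $E(x)$ of angular width $\alpha$, and similarly $\mathcal{C}_F(x)$ around $F(x)$. The domination inequality, together with a standard angle/norm computation (comparing $\|Df^\ell u_E\|$ and $\|Df^\ell u_F\|$ for components along $E$ and $F$), gives that for $\alpha$ small enough $Df^\ell(\mathcal{C}_F(x))$ is strictly contained in the interior of $\mathcal{C}_F(f^\ell(x))$ (with analogous backward invariance for $\mathcal{C}_E$). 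Because $\Lambda$ is compact and $E,F$ are continuous on $\Lambda$, the cones can be extended to a continuous field on a small tubular neighbourhood $U_0$ of $\Lambda$ while preserving the same angular width.

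Next I would use uniform continuity of $Df^\ell$ on compact sets: since the strict inclusions hold over $\Lambda$ and both $Df^\ell$ and the cones depend continuously on the point, there is a smaller neighbourhood $U\subset U_0$ of $\Lambda$ and a $C^1$ neighbourhood $\mathscr{N}$ of $f$ such that for every $g\in\mathscr{N}$ and every $x$ with $x,g^{-\ell}(x)\in U$, one has $Dg^{-\ell}(\mathcal{C}_E(x))\subset\mathcal{C}_E(g^{-\ell}(x))$, and analogously $Dg^\ell(\mathcal{C}_F(x))\subset\mathcal{C}_F(g^\ell(x))$. Shrinking $U$ further if necessary, the corresponding statements for $g$ itself (not just $g^\ell$) hold after replacing the cones by their $\ell$-step forward/backward images, which still have positive angular width; this gives the invariance conditions stated in~(2).

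Then, for $g\in\mathscr{N}$, I would define $\Lambda_g$ as the maximal $g$-invariant set in $U$, and set
\[
F_g(x)\eqdef\bigcap_{n\geqslant 0} Dg^{n\ell}\bigl(\mathcal{C}_F(g^{-n\ell}(x))\bigr),\qquad
E_g(x)\eqdef\bigcap_{n\geqslant 0} Dg^{-n\ell}\bigl(\mathcal{C}_E(g^{n\ell}(x))\bigr).
\]
A standard graph-transform / contracting-section argument, using that the cones shrink exponentially under forward (resp.\ backward) iteration thanks to the strict cone invariance, shows that each intersection is a single linear subspace of dimension $\dim F$ (resp.\ $\dim E$), that $E_g\oplus F_g=T_{\Lambda_g}M$, and that the splitting is $Dg$-invariant and dominated. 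Continuity of $E_g(x)$ and $F_g(x)$ in both $x$ and $g$ is inherited from the uniform contraction of the cone intersections together with the continuity of $Dg$ in $(x,g)$, which proves~(1). The main technical point to get right is the exponential shrinking of the iterated cones under perturbation; once that is established (via the quantitative domination inequality, which survives a small $C^1$ perturbation with slightly worse constants), everything else is formal.
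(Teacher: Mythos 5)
Your proposal is correct and is precisely the standard cone-field argument that the paper itself does not spell out but defers to \cite[Chapter B.2]{BonDiaVia:05}: invariant cones from domination over $\Lambda$, continuous extension to a neighbourhood, openness of the strict cone inclusions in the $C^1$ topology, and recovery of $E_g\oplus F_g$ by intersecting iterated cones via a graph-transform contraction. The only point worth tightening is the passage from $\ell$-step to one-step cone invariance in item (2), which is most cleanly handled by first choosing an adapted metric in which $\ell=1$.
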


\bibliographystyle{plain2}
\bibliography{BarDiaPer-submission}

\end{document}